\newtheorem{thm}{Theorem}[section]
\newtheorem{prop}[thm]{Proposition}
\newtheorem*{prop nn}{Proposition}
\newtheorem*{thm nn}{Theorem}
\newtheorem{lemma}[thm]{Lemma}
\newtheorem*{lemma nn}{Lemma}
\newtheorem*{conj nn}{Conjecture}
\theoremstyle{definition}
\newtheorem{defin}[thm]{Definition}
\newtheorem*{defin nn}{Definition}
\theoremstyle{remark}
\newtheorem{rem}[thm]{Remark}
\newtheorem*{rem nn}{Remark}
\newtheorem{eks}[thm]{Example}
\newtheorem*{eks nn}{Example}
\newtheorem{claim}[thm]{Claim}
\newtheorem*{claim nn}{Claim}
\newtheorem*{cor nn}{Corollary}
\newtheorem*{exercise nn}{Exercise}
\newcommand{\iso}{\simeq}
\newcommand{\isomap}{\stackrel{\sim}{\to}}
\newcommand{\Z}{\mathbb{Z}}
\newcommand{\N}{\mathbb{N}}
\newcommand{\E}{\mathbb{E}}
\newcommand{\Acal}{\mathcal{A}}
\newcommand{\Bcal}{\mathcal{B}}
\newcommand{\Ccal}{\mathcal{C}}
\newcommand{\Ocal}{\mathcal{O}}
\newcommand{\Fcal}{\mathcal{F}}
\newcommand{\Dcal}{\mathcal{D}}
\newcommand{\Hcal}{\mathcal{H}}
\newcommand{\Ical}{\mathcal{I}}
\newcommand{\Jcal}{\mathcal{J}}
\newcommand{\Ebold}{\mathbb{E}}
\newcommand{\Ar}{\operatorname{Ar}}
\newcommand{\colim}{\operatorname{colim}}
\newcommand{\Coh}{\operatorname{Coh}}
\newcommand{\Hom}{\operatorname{Hom}}
\newcommand{\id}{\operatorname{Id}}
\newcommand{\Map}{\operatorname{Map}}
\newcommand{\For}{\operatorname{For}}
\newcommand{\red}{\operatorname{red}}
\newcommand{\lrhom}{\mathbin{\rotatebox[origin=c]{56}{$\lozenge$}}}
\newcommand{\rrhom}{\mathbin{\reflectbox{\rotatebox[origin=c]{56}{$\lozenge$}}}}
\numberwithin{equation}{section}
\tikzset{degil/.style={
        decoration={markings,
            mark= at position 0.5 with {
                \node[transform shape] (tempnode) {$\backslash$};}},
        postaction={decorate}}}
\begin{document}

\title{Colored DG-operads and homotopy adjunction for DG-categories}
\author{Sergey Arkhipov}
\author{Tina Kanstrup}
\address{S.A. Matematisk Institut, Aarhus Universitet, Ny Munkegade, DK-8000 , Aarhus C, Denmark, email: hippie@math.au.dk} 
\address{T.K. Matematisk Institut, Aarhus Universitet, Ny Munkegade, DK-8000 , Aarhus C, Denmark, email: tina.kanstrup@mail.dk}

\maketitle

\begin{abstract}
Generalizing the approach to pseudo monoidal DG-categories as certain colored non-symmetric DG-operads, we introduce a certain relaxed notion of a category enriched in DG-categories. We construct model structures on the category of colored non-symmetric DG-operads and on the category of DGCat-enriched categories with a fixed set of objects. This allows us to talk about strong homotopy maps in both settings. We discuss the notion of a strong homotopy monad in a  DG-category and a notion of strong homotopy adjunction data for two DG-functors.
\end{abstract}


\section{Introduction}

This is the first paper in a series whose goal is to define homotopy adjunction for DG-categories and prove homotopy descent for them. This notion is native to the language of infinity-categories. However, the language of infinity-categories is fairly involved and for many applications the much simpler language of DG-categories is sufficient.

\subsection{Classical adjunction}

In the classical setting Schanuel and Street \cite{SS} showed that the data of adjoint functors between two categories can be repackaged as a functor between two 2-categories. For us a 2-category is a category (strictly) enriched in the category of categories and a 2-functor is an enriched functor. Let $k$ be a field. We now define the $k$-linear versions of the categories involved

\begin{defin}
\begin{enumerate}
\item The category $\Delta_k$ is the $k$-linear category whose objects are ordered sets $(n):=(1, \dots, n)$ for $n \in \Z_{> 0}$ and $(0)=\emptyset$. The morphisms are $k$-linear combinations of ordered maps.
\item Set $\nabla_k :=(\Delta_k)^{\text{op}}$. It is the $k$-linear category whose objects are ordered sets $[n]:=(0,1, \dots, n)$ for $n \in \Z_{\geq 0}$ and morphisms are $k$-linear combinations of ordered maps preserving first and last element.
\item The category $\lrhom_k$ has objects non-empty ordered sets $(n]=(0,1, \dots, n)$ and morphisms are $k$-linear combinations of ordered maps preserving last element.
\item The category $\rrhom_k$ has objects non-empty ordered sets $[n)=(0,1, \dots, n)$ and morphisms are $k$-linear combinations of ordered maps preserving first element.
\end{enumerate}
\end{defin}

\begin{rem}\label{StreetRem}
$\Delta_k$ has a monoidal structure given by taking the disjoint union $(n) \cdot (m)=(n+m)$. The unit in $\Delta_k$ is (0) and $(n)=(1)^{ n}$. Similarly, $\nabla_k$ has a monoidal structure given by connected union $[n] \otimes [m]=[n+m]$ with unit $[0]$. We also have actions
\begin{gather} \Delta_k \times \lrhom_k \to \lrhom_k, \qquad (n), (m] \mapsto (n+m] \\
\rrhom_k \times \nabla_k \to \rrhom_k, \qquad [n), [m] \mapsto [n+m)
\end{gather}
notice that $(n]=(1)^{n} \otimes (0]$ and $[n)=[0) \otimes [1]^n$. Moreover, we have maps given by connected union by end and beginning, and by disjoint union.
\begin{gather}
\lrhom_k \times \rrhom_k \to \Delta_k, \qquad (n], [m) \mapsto (n+m+1),\\
\rrhom_k \times \lrhom_k \to \nabla_k, \qquad [n), (m] \mapsto [n+m+1].
\end{gather}
\end{rem}

Using these categories we can construct the free adjunction 2-category:

\begin{defin}\label{FreeAdj}
The free adjunction 2-category Adj is the 2-category with two objects $\{0,1\}$ and Adj$_{0,0}=\Delta_k$, Adj$_{0,1}=\lrhom_k$, Adj$_{1,1}=\Delta_k^{\text{op}}$ and Adj$_{1,0}=\lrhom_k^{\text{op}}$.
\end{defin}

Let $A_1$ and $A_2$ be DG-categories. We collect the functors between them in a 2-category $\text{Fun}_{A_1,A_2}$ with two objects $\{0,1\}$ and $\Hom_{\text{Fun}_{A_1,A_2}}(i,j):=\text{Fun}(A_i, A_j)$. The data of a 2-functor $\Fcal:\text{Adj} \to \text{Fun}_{A_1,A_2}$ is given by four functors each of which is determined by its value on 1. The data of a pair of adjoint functors define four such functors in the following way.
\begin{align}
&\Fcal_{0,0}: \Delta_k \to \text{Fun}(A_1,A_1), \qquad (1) \mapsto G \circ F\\
&\Fcal_{0,1}: \lrhom_k \to \text{Fun}(A_1,A_2), \qquad (1] \mapsto F\\
&\Fcal_{1,0}: \rrhom_k \to \text{Fun}(A_2,A_1), \qquad [1) \mapsto G\\
&\Fcal_{1,1}: \nabla_k \to \text{Fun}(A_2,A_2), \qquad [1] \mapsto F \circ G
\end{align}

\begin{thm}\cite{SS}
The data of a pair of adjoint functors $F : A_1 \leftrightarrows A_2 : G$ is equivalent to the data of a 2-functor $\Fcal:\text{Adj} \to \text{Fun}_{A_1,A_2}$ which is identity on 0-morphisms.
\end{thm}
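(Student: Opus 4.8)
The plan is to exhibit the claimed equivalence as a pair of mutually inverse constructions, and to isolate the bookkeeping into the classical statement of \cite{SS} by exploiting that each of the four hom-categories is a free $k$-linear category. A $2$-functor $\Fcal \colon \text{Adj} \to \text{Fun}_{A_1,A_2}$ that is the identity on the two objects is precisely a quadruple of $k$-linear functors $\Fcal_{0,0}, \Fcal_{0,1}, \Fcal_{1,0}, \Fcal_{1,1}$ on the hom-categories $\Delta_k$, $\lrhom_k$, $\rrhom_k$, $\nabla_k$ that is compatible with horizontal composition, i.e.\ with the two monoidal structures, the two module actions, and the two pairing functors recorded in Remark~\ref{StreetRem}. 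Since morphisms in $\Delta_k$, $\nabla_k$, $\lrhom_k$ and $\rrhom_k$ are by definition $k$-linear combinations of ordered maps, these are the free $k$-linear categories on the underlying set-level ordinal and interval categories; hence a $k$-linear functor out of any of them is the same datum as an ordinary functor defined on generators, the $k$-linear $2$-functors in question biject with ordinary $2$-functors from the classical free adjunction into the underlying $2$-category of DG-categories, and passing to $k$-linear combinations imposes no further constraint. This freeness is the device that lets us invoke \cite{SS} verbatim; the explicit correspondence below is what makes the bijection concrete.

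For the passage from an adjunction to a $2$-functor I would begin from data $(F, G, \eta, \epsilon)$ with $\eta \colon \id_{A_1} \Rightarrow G \circ F$ and $\epsilon \colon F \circ G \Rightarrow \id_{A_2}$ and define $\Fcal$ on generators. On $\Delta_k$ put $(1) \mapsto G \circ F$, send the unique map $(0) \to (1)$ to $\eta$ and the surjection $(2) \to (1)$ to $G\epsilon F$; this is exactly the monoidal functor classifying the \emph{monad} $(G \circ F,\, G\epsilon F,\, \eta)$, which exists because $\Delta_k$ is the walking monoid and the monad axioms hold for any adjunction. Dually, on $\nabla_k = \Delta_k^{\text{op}}$ the assignment $[1] \mapsto F \circ G$ together with $\epsilon$ and $F\eta G$ is the comonoidal functor classifying the \emph{comonad} $(F \circ G,\, F\eta G,\, \epsilon)$. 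Finally $\Fcal_{0,1}$ and $\Fcal_{1,0}$ send the generating objects $(1]$ and $[1)$ to $F$ and $G$ and are then forced on all remaining objects by the module actions $(n) \cdot (m] = (n{+}m]$ and $[n) \cdot [m] = [n{+}m)$; their functoriality, and the compatibility of the whole quadruple with the pairing functors of Remark~\ref{StreetRem}, reduces to naturality of $\eta$ and $\epsilon$ together with the triangle identities.

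For the reverse direction I would read the adjunction data back off a given $\Fcal$: take $F$ and $G$ to be the values of $\Fcal_{0,1}$ and $\Fcal_{1,0}$ on the generating objects $(1]$ and $[1)$, let $\eta$ be the image under $\Fcal_{0,0}$ of $(0) \to (1)$, and $\epsilon$ the image under $\Fcal_{1,1}$ of the dual generator. The two triangle identities then appear as the images under $\Fcal$ of two specific relations of $\text{Adj}$ that become visible through the pairing functors $\lrhom_k \times \rrhom_k \to \Delta_k$ and $\rrhom_k \times \lrhom_k \to \nabla_k$, namely the relations expressing $(\epsilon F)\circ(F\eta) = \id_F$ and $(G\epsilon)\circ(\eta G) = \id_G$; functoriality of $\Fcal$ transports them to the corresponding equations among the $\Fcal_{ij}$, and that the two assignments are mutually inverse is an inspection on generators. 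I expect the genuine obstacle to be neither construction in isolation but the combinatorial input they both rest on: verifying that these generators and relations actually \emph{present} $\text{Adj}$, i.e.\ that every ordered map in the four categories factors through the unit and (co)multiplication generators with no relations beyond the (co)monad laws and the two triangle identities. This presentation is precisely the content of the free-adjunction theorem of \cite{SS}, so the cleanest write-up derives the $k$-linear statement by the freeness reduction of the first paragraph rather than by re-establishing the presentation by hand.
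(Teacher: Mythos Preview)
Your proposal is correct, and in fact more detailed than anything the paper itself provides: the paper does not prove this theorem at all but simply states it with the citation \cite{SS}. Your write-up is the natural way to deduce the $k$-linear statement from the classical one, namely by observing that $\Delta_k$, $\nabla_k$, $\lrhom_k$, $\rrhom_k$ are free $k$-linearizations of their underlying ordinal categories, so that $k$-linear $2$-functors out of $\text{Adj}$ biject with ordinary $2$-functors out of the Schanuel--Street free adjunction, and then invoking \cite{SS} for the presentation of that $2$-category by the monad, comonad, and triangle-identity relations.

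One small caution: in the paper's Definition~\ref{FreeAdj} the hom-category $\text{Adj}_{1,0}$ is written as $\lrhom_k^{\text{op}}$, whereas the subsequent description of $\Fcal_{1,0}$ (and your proposal) uses $\rrhom_k$. These are canonically equivalent, but if you intend your argument to slot into the paper you should note the identification $\rrhom_k \simeq \lrhom_k^{\text{op}}$ explicitly so that the two conventions match up.
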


When $A_1$ and $A_2$ are ordinary categories we have the Barr-Beck theorem

\begin{thm}[Barr-Beck]
Let $T$-mod be the category of modules over the monad $T=G \circ F$. Assume that $G$ commutes with colimits. Then there is an equivalence of categories $T$-mod $\iso A_2$ if and only if $G$ is conservative.
\end{thm}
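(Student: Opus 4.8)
The plan is to prove the nontrivial direction by constructing the comparison functor $K \colon A_2 \to T\text{-mod}$ together with an explicit left adjoint $L$, and then showing that both the unit and counit of $L \dashv K$ are isomorphisms. First I would define $K$ on objects by $X \mapsto (GX, G\epsilon_X)$, where $\eta \colon \id \to GF = T$ and $\epsilon \colon FG \to \id$ are the unit and counit of the adjunction; the structure map $G\epsilon_X \colon TGX = GFGX \to GX$ satisfies the $T$-module axioms by the triangle identities. The easy direction of the equivalence is then immediate: the forgetful functor $U \colon T\text{-mod} \to A_1$ is always conservative, and since $G = U \circ K$, if $K$ is an equivalence then $G$ is conservative.

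For the converse I would build $L \colon T\text{-mod} \to A_2$ as a coequalizer. Given a $T$-module $(M, h \colon TM \to M)$, set
\begin{equation}
L(M,h) := \operatorname{coeq}\bigl( FGFM \rightrightarrows FM \bigr),
\end{equation}
where the two arrows are the counit $\epsilon_{FM}$ and the map $F(h)$. This coequalizer exists because $A_2$ admits colimits, which I take as part of the ambient hypotheses (consistent with asking $G$ to commute with them). A routine computation with the adjunction isomorphisms then identifies $L$ as left adjoint to $K$.

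The heart of the argument is the invertibility of the unit and counit, and this is exactly where the two hypotheses enter. Because $G$ commutes with colimits, applying $G$ to the defining coequalizer of $L(M,h)$ yields $GL(M,h) = \operatorname{coeq}(TTM \rightrightarrows TM)$ with the two arrows $\mu_M$ and $Th$; every $T$-module is canonically the coequalizer of this bar presentation by free modules, and the coequalizer is split (hence absolute), the splitting being furnished by $\eta_M$ and $\eta_{TM}$. Thus $GL(M,h) \cong M$ compatibly with the $T$-actions, and the unit $(M,h) \to KL(M,h)$ is an isomorphism. Dually, for $X \in A_2$ the counit $LKX \to X$ is induced by $\epsilon_X$; applying $G$ and using that $G\epsilon_X$ exhibits $GX$ as a split coequalizer shows that $G$ of the counit is an isomorphism, whence the counit itself is an isomorphism because $G$ is conservative.

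I expect the main obstacle to be the verification that the relevant coequalizers are split after applying $G$ --- that is, producing the explicit contracting maps from $\eta$ and $\epsilon$ and checking the identities that make the bar construction a split coequalizer in the sense of Beck. Once this split-coequalizer bookkeeping is in place, the hypothesis that $G$ preserves colimits lets one transport the coequalizer computation across $G$, while conservativity of $G$ upgrades the resulting $G$-local isomorphisms to genuine isomorphisms; the triangle identities for $L \dashv K$ then follow formally.
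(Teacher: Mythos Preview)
The paper does not actually prove this statement: the Barr--Beck theorem appears in the introduction as a classical background result, stated without proof, to motivate the homotopy version developed later. So there is no ``paper's own proof'' to compare against.

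That said, your sketch is the standard Beck argument and is essentially correct. A few small points are worth tightening. First, you silently assume that $A_2$ has the coequalizers needed to define $L$; the paper's hypotheses only say that $G$ commutes with colimits, not that $A_2$ has them, so you should state this explicitly (in the classical formulation one typically assumes $A_2$ has coequalizers of $G$-split pairs, which is the minimal hypothesis). Second, the crucial step is not merely that $G$ preserves colimits in general, but that the specific coequalizers in question become \emph{split} after applying $G$ --- split coequalizers are absolute, hence preserved by any functor, so the hypothesis ``$G$ commutes with colimits'' is actually stronger than what you use for the unit computation; where you genuinely need preservation is in identifying $G$ applied to the defining coequalizer of $L(M,h)$ with the coequalizer of the $G$-images. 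Finally, your easy direction (``if $K$ is an equivalence then $G$ is conservative'') is fine, but note that the paper's informal statement omits the standing assumption that $G$ has a left adjoint $F$, which you correctly take as given.
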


\subsection{Homotopy version}
To pass to the homotopy setting we need quasi-isomorphisms to become invertible. A natural way to achieve this is to replace usual DG-functors by A$_\infty$-functors. For this we need to work with colored non-symmetric operads.

\begin{defin nn}
\begin{enumerate}[(i)]
\item Let $\E$ be a set and set $k_{\E}:=\oplus_{s \in \E} k e_s$, where $e_s$ are basis elements with $e_s e_t=\delta_{s,t} e_s$. An $\N$-collection $V$ over $k_\E$ is a collection of complexes $V=\{V(n) \mid n \geq 1\}$ such that $V(n) \in k_\E^n$-mod-$k_\E$. I.e. it has a decomposition $V(n)=\bigoplus_{s_1, \dots, s_n,t \in \E} V(s_1, \dots , s_n,t)$.
\item A morphism $f$ between $\N$-collections $A$ and $B$ is a collection \[\{ f(n) \in \Hom_{k_\E^n \text{-mod-} k_\E}(A(n), B(n)) \mid n \geq 1\}.\]
This category is denoted by $\N$-col$_\E$ and it has a monoidal structure given by
\[ (V \odot W)(n):=\bigoplus_{m \in \N, n_1+\dots + n_m=n} W(m)  \otimes_{k_\E^m}  \bigl[V(n_1) \boxtimes_{k} \cdots \boxtimes_{k} V(n_m))\bigr].\]
\item A colored non-symmetric DG-operad with colors $\E$ is a unital associative algebra in $\N$-col$_\E$.
\item A DG-operad is unital if for any $n \in \N$ and tuple $m_1, \dots, m_n$ with $0 \leq m_i \leq 1$ the morphism of complexes $A(n) \otimes_{A(1)^n} (A(m_1) \otimes \cdots \otimes A(m_n)) \to A(m_1 + \dots + m_n)$ is an isomorphism.
\end{enumerate}
\end{defin nn}

A symmetric monoidal DG-category $A$ defines an operad by setting $A(s_1, \dots, s_n, t) := \Hom_A(s_1 \otimes \cdots \otimes s_n,t)$. Tabuada \cite{Ta} constructed a model category structure on the category of DG-categories. In a similar way we construct one for operads and for unital operads.

\begin{thm nn}[\ref{ModelCatOperad} and \ref{ModStructUOper}] The category of (unital) colored non-symmetric DG-operads has a cofibrantly generated model category structure in which all objects are fibrant.
\end{thm nn}

For symmetric operads there is a notion of a Bar and a Cobar construction (see \cite{GK}). We adapt their construction to our setting and prove that they are adjoint and that Cobar(Bar$(P)$) of an operad $P$ is a cofibrant replacement. Hence, an A$_\infty$-functor between two operads is a functor between their Bar constructions.

The homotopy version of the category of 2-categories used by Schanuel and Street needs to be enriched in DG-categories. We consider only 2-categories with a fixed set of objects $I$ and only 2-functors which are identity on the set of objects. The set $\E$ is then replaced with a collection of sets $\E=\{\E_{ij}\}_{i,j \in I}$. One may think of elements of $I$ as 0-morphisms and elements of $\E$ as 1-morphisms. Taking source or target defines two maps $s,t: \E \to I$. A path is a sequence of morphisms $\text{Path}_{\E,I}(n) :=\E \times_I \cdots \times_I \E$ ($n$ factors) and a cell is a morphism with a path going between the same elements $\text{Cell}_{\E,I}=\sqcup_{n} \text{Path}_{\E,I}(n) \times_{I \times I} \E$. Our version of 2-morphisms will correspond to homotopies between them. Concatenation of paths gives a natural structure of an operad in sets $\text{Cell}^n \times_{\text{Path}^n} (\text{Cell}^{m_1} \times_I \dots \times_I \text{Cell}^{m_n})\to \text{Cell}^{m_1+ \dots + m_n}$ with $(t_1, \dots, t_n;t), (s_1; t_1), \dots, (s_n;t_n) \mapsto (s_1, \cdots, s_n; t)$. This can be rewritten on the level of rings. Set $k_I:=\bigoplus_{r \in \E} k e_r$,  $k_\E:=\bigoplus_{i,j \in I} k_{\E_{i,j}}$, $\Ocal({\text{Path}^n}) :=k_\E \otimes_{k_I} \dots \otimes_{k_I} k_\E$, and $\Ocal(\text{Cell}^n)=\Ocal(\text{Path}^n) \otimes_{k_I \times k_I} k_\E$. Pulling back along the concatenation map gives a morphism
\begin{align} \label{OCellintro}
\Ocal(\text{Cell}^{m_1+ \dots + m_n}) \to  \Ocal(\text{Cell}^n) \otimes_{\Ocal(\text{Path}^n)} (\Ocal(\text{Cell}^{m_1}) \otimes_{k_I} \dots \otimes_{k_I} \Ocal(\text{Cell}^{m_n}))
\end{align}

\begin{defin nn}[\ref{NseqIE} and \ref{2CatI}]
\begin{enumerate}[(i)]
\item The category $\N \text{-seq}_{I, \E}$ has objects collections of complexes $\{A(n) \mid n \geq 1\}$ with $A(n)=\bigsqcup_{i,j \in I, t \in \E_{ij}, s \in \text{Path}^n(i,j)} A_{i,j}(s;t) \in \Ocal(\text{Cell}^n)\text{-mod}$.
\item The category $\N \text{-seq}_{I, \E}$ has a product $\odot$ given by 
\[(A \odot B)(m) := \bigoplus_{n \in \N, m_1 + \dots m_n=m} B(n) \otimes_{\Ocal({\text{Path}^n})} (A(m_1) \boxtimes_{k_I} \dots \boxtimes_{k_I} A(m_n)).\]
\item A category in 2-Cat$_I$ is a unital associative algebra in $\N-\text{seq}_{I, \E}$ with $A(n)  \otimes_{\Ocal({\text{Path}^n})} (A(m_1) \boxtimes_{k_I} \dots \boxtimes_{k_I} A(m_n)) \to A(m_1 + \dots +m_n)$
a morphism of $\Ocal(\text{Cell}^{m_1 + \dots m_n})$-modules via \eqref{OCellintro}. 
\item A morphism in 2-Cat$_I$ is the data $F=\{F_\E, F(n), n \geq 1\} : (I, \E_A, A) \to (I,\E_B, B)$. Here $F_\E : \E_A \to \E_B$ is a morphism. It induces a map of rings $\Ocal(\text{Cell}^n_A) \to \Ocal(\text{Cell}^n_B)$ and we require that this map is compatible with \eqref{OCellintro}. For $n \geq 1$ the $F(n)$ are morphisms of $\Ocal(\text{Cell}^n_A)$-modules $F(n) : A(n) \to B(n)$ satisfying composition.
\item A unital 2-category $A$ is an object in 2-Cat$_I$ satisfying that for any $n \in \N$ and tuple $m_1, \dots, m_n$ with $0 \leq m_i \leq 1$ the morphism of complexes $A(n) \boxtimes_{\Ocal(\text{Path}^n)} (A(m_1) \boxtimes \cdots \boxtimes A(m_n)) \to A(m_1 + \dots + m_n)$
is an isomorphism. A unital morphism is a morphism $F$ in 2-Cat$_I$ such that $F(\id_i)=\id_i$ for all $i \in I$.
\end{enumerate}
\end{defin nn}

\begin{thm nn}[\ref{2CatIModelStruct} and \ref{2CatIUModelStruct}]
The category 2-Cat$_I$ (and its unital version) has a cofibrantly generated model category structure in which all objects are fibrant.
\end{thm nn}

The model structure is a generalization of the one for operads. Likewise, we generalize the Bar and Cobar constructions and prove that for any object $A$ in 2-Cat$_I$ Cobar(Bar$(A)$) is a cofibrant replacement. The linearized adjunction category Adj from Schanuel and Street naturally defines an object in 2-Cat$_{\{0,1\}}$. For DG-categories $A_1,A_2$ we define the A$_\infty$ version $\text{DGFun}_{\infty}(A_1,A_2) \in 2\text{-Cat}_I$ of Fun$_{A_1,A_2}$ with 2-morphisms being coherent natural transformation of DG-functors.

\begin{defin nn}[\ref{HomAdjDef}]
Let $A_1,A_2$ be cofibrant DG-categories.
A homotopy adjunction is an A$_\infty$-morphism Cobar(Bar(Adj)) $\to \text{DGFun}_\infty(A_1,A_2)$.
\end{defin nn}

Notice that Anno and Logvinenko in their recent paper \cite{AL} produced a certain canonical amount of higher data for two DG-functors between two DG-categories A and B (realized as bimodules) upgrading the fact that the corresponding functors on the homotopy categories are adjoint. We plan to compare their data with our universal adjunction data for DG-functors.

\subsection*{Relation to Barr-Beck theorem}
For $F$ and $G$ a pair of adjoint functors between ordinary categories $\Ccal_1$ and $\Ccal_2$ the composition $T = G \circ F$ is a monad on $\Ccal_1$. Under certain assumptions on $F$ and $G$ the Barr-Beck theorem states that the category of $T$-modules in $\Ccal_1$ is equivalent to $\Ccal_2$. The goal is to provide an analog of Barr-Beck theorem for a pair of DG-categories $A_1, A_2$ and a pair of DG-functors $F$ and $G$ which become adjoint on the level of the corresponding homotopy categories. Restricting the morphism in 2-Cat$_{\{0,1\}}$ to a morphism of DG-operads Cobar(Bar(Adj$_{(0,0)}$)) $\to \text{DGFun}_{\infty}(A_1,A_2)_{(0,0)}$ defines a homotopy monad. The goal is to define the category of strong homotopy modules over the monad in $A_1$ and to prove the homotopy Barr-Beck theorem: 

\begin{conj nn}
Under certain assumptions, for a pair of homotopy adjoint functors $F$ and $G$ between $A_1$ and $A_2$, the DG-categories $T$-$\operatorname{hoMod}(A_1)$ and $A_2$ are quasi-equivalent.
\end{conj nn}

This should be compared to the Barr-Beck-Lurie theorem for $(\infty,1)$-categories in \cite{Lu}.

\subsection{Acknowledgements}
The authors would like to thank Alexander Efimov, Timothy Logvinenko, Dasha Poliakova, Boris Shoikhet, and Sebastian \O rsted for stimulating discussions.

This paper was written while the second author was a postdoc at the Max Planck Institute for Mathematics. Most of the work was done while the first author was also visiting. Both authors would like to express their gratitude to MPIM for the invitations and for excellent working conditions.

\section{DG-categories}
In what follows, all DG-categories and operads are considered to be small.
We collect the categorical data into that of certain DG-algebras and DG-modules. Thus, we start by considering the corresponding setup for DG-categories. Let $k$ be a field and $\E$ be a set of objects. Set $k_{\E}:=\oplus_{s \in \E} k e_s$, where $e_s$ are basis elements with $e_s e_t=\delta_{s,t} e_s$. This is a non-unital associative algebra. Let $k_\E$-mod-$k_\E$ be the category of complexes of vector spaces with two commuting actions
\[k_\E \otimes V \otimes k_\E \to V, \qquad V \in \text{Com(Vect)} \]
such that for all $m \in V$ there exist only finitely may $s,t \in \E$ such that $s \cdot m \neq 0$ and $m \cdot t \neq 0$. Moreover, we require that $\sum_s e_s$ acts as identity.

\begin{rem}
For $s,t \in \E$ we write $V(s,t):=sVt$. It follows that for all $V \in k_\E$-mod-$k_\E$
\[ V=\bigoplus_{s,t \in \E}V(s,t). \]
\end{rem}

The category $k_\E$-mod-$k_\E$ has a monoidal structure given by
\[ (V \otimes W)(s,t):=\bigoplus_{u \in \E} V(u,t) \otimes W(s,u), \qquad V,W \in k_\E\text{-mod-}k_\E. \]
The unit object is the regular bimodule placed in degree zero $\underline{k}_\E=\oplus_{s,t} \underline{k}_\E(s,t)$.

\begin{defin}
A DG-category with set of objects $\E$ is a unital associative algebra in the monoidal category $k_\E$-mod-$k_\E$. Thus, we have the maps
\[ A \otimes A \to A, \qquad \text{given by } A(u,t) \otimes A(s,u) \to A(s,t) \]
and $k_\E \to A$ given by $e_s \mapsto A(s,s)$. This forms a category denoted by DGCat$_\E(k)$ with morphisms being maps of unital associative algebras.
\end{defin}

The forgetful functor
\[\text{Obl}: \text{DGCat}_\E(k) \to k_\E\text{-mod-}k_\E \]
has a left adjoint denoted by Free$_\E$
\[\text{Free}_\E(V):=\bigoplus_{n \in \N} V \otimes \cdots \otimes V. \]

\subsection{DG-functors}
Let $\E_1$ and $\E_2$ be sets of objects and let $f: \E_1 \to \E_2$ be a map. This gives rise to a map of algebras $f: k_{\E_1} \to k_{\E_2}$ and so a pullback functor
\begin{gather}
f^* : k_{\E_2}\text{-mod-} k_{\E_2} \to k_{\E_1}\text{-mod-} k_{\E_1},\\
f^*(V)(s,t):=V(f(s), f(t))
\end{gather}

Observe that
\begin{align}
f^*(V \otimes W)(s,t) & = V \otimes W(f(s), f(t))\\
&=\bigoplus_{u \in \E_2} V(u,f(t)) \otimes W(f(s),u),\\
f^*(V) \otimes f^*(W)(s,t) &=\bigoplus_{v \in \E_1} V(f(v), f(t)) \otimes W(f(s), f(v)).
\end{align}
Hence, there is a map
\[ f^*(V) \otimes f^*(W) \to f^*(V \otimes W) \]
The functor is lax monoidal. Notice also that there is a natural map $k_{\E_1} \to f^*(k_{\E_2})$ given by $e_s \mapsto e_{f(s)}$. Let $k_{\E_2} \to V$ be a unit map in $k_{\E_2}\text{-mod-} k_{\E_2}$. Then the composition $k_{\E_1} \to f^*(k_{\E_2}) \to f^*(V)$ is a unit map in $k_{\E_1}\text{-mod-} k_{\E_1}$. Hence, $f^*$ takes unital associative algebras to unital associative algebras. In this way we reformulate the usual definitions of DG-categories and DG-functors as certain associative algebras and certain associative algebra maps.

\begin{defin}
Let $A_1 \in \text{DGCat}_{\E_1}(k)$ and $A_2 \in \text{DGCat}_{\E_2}(k)$. A DG functor $f: A_1 \to A_2$ is a pair of a map $f: \E_1 \to \E_2$ and $F: A_1 \to f^*(A_2)$ a map in DGCat$_{\E_1}(k)$.
\end{defin}

\section{Colored non-symmetric DG-operads}

In this section we recall the definition of the category of colored non-symmetric DG-operads, which is a multicategory version of DG-categories.

\begin{defin}
An $\N$-collection $V$ over $k_\E$ is a collection of complexes $V=\{V(n) \mid n \geq 1\}$ such that $V(n) \in k_\E^n$-mod-$k_\E$. I.e. it has a decomposition
\[ V(n)=\bigoplus_{s_1, \dots, s_n,t \in \E} V(s_1, \dots , s_n,t). \]
A morphism $f$ between $\N$-collections $A$ and $B$ is a collection $\{ f(n) \in \Hom_{k_\E^n \text{-mod-} k_\E}(A(n), B(n)) \mid n \geq 1\}$. This category is denoted by $\N$-col$_\E$ and it has a monoidal structure given by
\[ (V \odot W)(n):=\bigoplus_{\substack{m \in \N, \\ n_1+\dots + n_m=n}} W(m)  \otimes_{k_\E^m}  \bigl[V(n_1) \boxtimes_{k} \cdots \boxtimes_{k} V(n_m))\bigr].\]
\end{defin}

\begin{defin}
\begin{enumerate}
    \item A colored non-symmetric DG-operad with colors $\E$ is a unital associative algebra in $\N$-col$_\E$. We denote the category of these by DG-Oper$_\E$.
    \item  A colored non-symmetric DG-cooperad with colors $\E$ is a counital coassociative coalgebra in $\N$-col$_\E$. We denote the category of these by DG-CoOp$_\E$
\end{enumerate}
\end{defin}

In this paper all operads are non-symmetric so from now on we drop writing non-symmetric. We warn the reader that many authors use the word operad to refer to \emph{symmetric} operads (which are required to be invariant under the action of the symmetric group).

\begin{eks}
A monoidal DG-category $A$ defines a colored DG-operad by setting 
\[A(s_1, \dots, s_n, t) := \Hom_A(s_1 \otimes \cdots \otimes s_n,t).\]
\end{eks}

There is a pair of adjoint functors
\[ (1) : \text{DG-Oper}_\E \rightleftarrows \text{DG-Cat}_\E : \text{Triv}\]
where $(1)$ is the forgetful functor taking $A$ to the DG-category $A(1)$ with the same objects as $A$ and $\Hom_{A}(s,t) :=A(s,t)$. The DG-operad Triv$(B)$ is defined by $B(s,t)=\Hom_B(s,t)$ and $B(s_1,\dots, s_n,t)=0$ for $n>1$.

\subsection{DG-functors for operads}

One can define DG-functors for operads with different sets of objects in the same manner as for DG-categories. Let $f : \E_1 \to \E_2$ be a map. This defines a functor
\begin{gather}
f^* : \N\text{-col}_{\E_1} \to \N\text{-col}_{\E_2}\\
f^*(V)(s_1, \dots, s_n, t):=V(f(s_1), \dots, f(s_n), f(t)).
\end{gather}
Observe that
\begin{align}
    f^*(V \odot W)&(s_1, \dots, s_n, t) =(V \odot W)((f(s_1), \dots, f(s_n), f(t))\\
    &=\;\smash[b]{\smashoperator{\bigoplus_{\substack{m \in \N, n_1+\dots + n_m=n, \\u_1, \dots, u_m \in \E_2}}}}\; (V(f(s_1), \dots, f(s_{n_1}), u_1) \otimes \dots \otimes V(f(s_{n_1 + \dots +n_{m-1}}), \dots, f(s_n), u_m)) \\
    & \hspace{3cm} \otimes W(u_1, \dots, u_m, f(t)).
\end{align}
\begin{align}
f^*(V) &\odot f^*(W)(s_1, \dots, s_n, t)\\
&=\;\smash[b]{\smashoperator{\bigoplus_{\substack{m \in \N, n_1+\dots + n_m=n, \\v_1, \dots, v_m \in \E_1}}}}\;(f^*(V)(s_1, \dots, s_{n_1}) \otimes \dots \otimes f^*(V)(s_{n_1+\cdots+n_{m-1}}, \dots, s_n, v_m)) \\
& \hspace{3cm}\otimes f^*(W)(v_1, \dots, v_m, t)\\
 &=\;\smash[b]{\smashoperator{\bigoplus_{\substack{m \in \N, n_1+\dots + n_m=n, \\u_1, \dots, u_m \in \E_2}}}}\; (V(f(s_1), \dots, f(s_{n_1}), f(v_1)) \otimes \dots \otimes V(f(s_{n_1 + \dots +n_{m-1}}), \dots, f(s_n), f(v_m))) \\
    & \hspace{3cm} \otimes W(f(v_1), \dots, f(v_m), f(t)).
\end{align}
Just like for DG-categories there is a map given by inclusion
\[ f^*(V) \odot f^*(W) \to f^*(V \odot W)\]
Just like in the DG-category setting the functor is lax monoidal. A unit map $k_\E \to V$ maps into $V(1)$ so the fact that $f^*$ takes unital associative algebras to unital associative algebras now follows from the DG-category setting. We also have a lax monoidal functor given by projection
\[ f^*(V \odot W) \to f^*(V) \odot f^*(W)\]
Let $V \to k_{\E_2}$ be a counit map. Then the composition map $f^*(V) \to f^*(k_{\E_2}) \to k_{\E_1}$ is a counit map. Hence, we have the analogous definition

\begin{defin}
\begin{enumerate}
    \item Let $A_1 \in \text{DG-Oper}_{\E_1}(k)$ and $A_2 \in \text{DG-Oper}_{\E_2}(k)$. A morphism $A_1 \to A_2$ is a pair of a map $f: \E_1 \to \E_2$ and $F: A_1 \to f^*(A_2)$ a map in DG-Oper$_{\E_1}(k)$. 
    \item Let $A_1 \in \text{DG-Coop}_{\E_1}(k)$ and $A_2 \in \text{DG-Coop}_{\E_2}(k)$. A morphism $A_1 \to A_2$ is a pair of a map $f: \E_1 \to \E_2$ and $F: f^*(A_2) \to A_1$ a map in DG-Coop$_{\E_1}(k)$.
\end{enumerate}
Let DG-Oper$(k)$ (resp. DG-Coop$(k)$) denote the category with objects being objects in DG-Oper$_\E(k)$ (resp. DG-Coop$_\E(k)$) for some set $\E$, and morphisms the ones just defined.
\end{defin}

\section{Model category structure on colored DG-operads}

In this section we define a model category structure on the category of colored non-symmetric DG-operads. A similar model category structure for simplicial operads was constructed in a paper by Cisinski and Moerdijk \cite{CM}. A paper by Caviglia \cite{Ca} proves that, under certain conditions, the model structure on a monoidal model category can be transferred to a model structure on the category of  colored operads enriched over this category. Our paper is independent of \cite{Ca}. We prove the theorem

\begin{thm} \label{ModelCatOperad}
The category of colored DG-operads has a cofibratntly generated model structure.
\end{thm}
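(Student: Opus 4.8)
The plan is to construct the model structure by transfer along the free–forgetful adjunction from a known cofibrantly generated model structure on the base category $\N\text{-col}_\E$ of $\N$-collections. Let me sketch how I would do this.

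First I would fix the underlying model structure on $\N\text{-col}_\E$. Since an object $V$ of $\N\text{-col}_\E$ is just a collection of complexes $V(n) \in k_\E^n\text{-mod-}k_\E$, and the category of complexes of $k$-vector spaces carries the standard projective model structure (weak equivalences the quasi-isomorphisms, fibrations the degreewise surjections, and generating (trivial) cofibrations the usual maps $S^{n-1} \to D^n$ and $0 \to D^n$ built from disks and spheres), I would equip $\N\text{-col}_\E$ with the product/componentwise model structure. Concretely, a map $f = \{f(n)\}$ is a weak equivalence (resp. fibration) iff each $f(n)$ is a quasi-isomorphism (resp. degreewise surjection) of complexes. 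This is cofibrantly generated, with generating cofibrations $I$ and generating trivial cofibrations $J$ obtained by applying the free $k_\E^n\text{-mod-}k_\E$ functors to the generators from complexes, placed in each arity $n \geq 1$.

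Next I would transfer this structure across the adjunction between $\N\text{-col}_\E$ and $\text{DG-Oper}_\E$, whose left adjoint is the free operad functor and whose right adjoint is the forgetful functor $\text{Obl}$ sending an operad to its underlying collection. One defines a map of operads to be a weak equivalence (resp. fibration) precisely when its image under $\text{Obl}$ is one in $\N\text{-col}_\E$. The candidate generating (trivial) cofibrations are $\text{Free}(I)$ and $\text{Free}(J)$. To apply Kan's transfer theorem (as in the standard references on cofibrantly generated model categories) I would verify: (1) the forgetful functor preserves filtered colimits, so that the small object argument applies to $\text{Free}(I)$ and $\text{Free}(J)$; and (2) the acyclicity condition, namely that every relative $\text{Free}(J)$-cell complex is a weak equivalence. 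I expect all objects to come out fibrant automatically, since fibrations are detected by degreewise surjectivity and every complex of vector spaces surjects onto $0$.

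The hard part will be the acyclicity condition in step (2): one must show that pushouts in $\text{DG-Oper}_\E$ of the free trivial cofibrations $\text{Free}(J)$, and transfinite composites thereof, remain quasi-isomorphisms on underlying collections. This is delicate because the monoidal product $\odot$ defining operad composition is genuinely nonlinear in the input collection—the formula for $(V \odot W)(n)$ involves sums over compositions $n_1 + \dots + n_m = n$ with tensor powers $V(n_1) \boxtimes \cdots \boxtimes V(n_m)$—so pushouts along free maps do not decompose as simply as in the additive setting. The strategy here is to filter the pushout by the number of generating cells appearing in each composite tree and to analyze the associated graded pieces arity-by-arity; because we work over a field $k$, the functors $\boxtimes_k$ and $\otimes_{k_\E^m}$ are exact, so on each associated graded layer the relevant maps are built from tensor products and direct sums of quasi-isomorphisms and hence remain quasi-isomorphisms. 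Assembling these filtration arguments to conclude that the full transfinite composite is a quasi-isomorphism is the main technical obstacle, and I would expect this to occupy the bulk of the proof.
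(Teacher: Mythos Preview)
Your proposal constructs a different model structure from the one the paper actually builds, and on a different category. You transfer along the free--forgetful adjunction for a \emph{fixed} color set $\E$, obtaining weak equivalences that are aritywise quasi-isomorphisms and fibrations that are aritywise surjections. But the paper works on the category $\text{DG-Oper}(k)$ where morphisms are allowed to change the color set (via a map $f:\E_1\to\E_2$), and the model structure specified in Definition~\ref{ModelStruct} is Tabuada-style: a weak equivalence must also induce an \emph{equivalence of categories} $H^0(\Acal(1))\to H^0(\Bcal(1))$, and a fibration must induce an \emph{isofibration} there. Accordingly, the paper's generating sets are not $\text{Free}(I)$ and $\text{Free}(J)$ from complexes; they include the object-creating map $\emptyset\to\Acal$ in $I$ and the ``homotopy equivalence'' map $\Acal\to\Hcal$ in $J$, neither of which arises by applying the free-operad functor to a map of $\N$-collections with fixed $\E$.

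Because of this, the heart of the acyclicity argument in the paper is not the filtration-by-number-of-cells analysis you outline (that part, for $\Ar_m(0)\to\Ar_m(D_n)$, is comparatively easy), but rather showing that pushouts along $\Acal\to\Hcal$ are weak equivalences. The paper handles this by factoring through an intermediate category $\Hcal_0$ and invoking a fully-faithful pushout lemma (Lemma~\ref{FullyFaithful}) together with an explicit description of $\Hcal_0$ as $k$ plus an acyclic ideal. Your transfer framework has no analogue of this step, since $\Acal\to\Hcal$ is not in $\text{Free}(J)$. So while your approach would likely yield \emph{a} cofibrantly generated model structure on $\text{DG-Oper}_\E$ for each fixed $\E$, it does not establish the Dwyer--Kan style structure on the full category of colored DG-operads that the paper is after, and the main technical obstacle you identify is not the one that actually dominates the paper's proof.
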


Note that the category of colored DG-Operads has pullbacks and small products so it has all small limits. Dually, it also has pushouts and small coproducts so it has all small colimits. We will need an explicit pushout construction for the proof of the main theorem \ref{ModelCatOperad} in section \ref{ProofMainThm}. The proof follows the same lines as Tabuada's construction \cite{Ta} of a model category structure on the category of DG-categories. We explicitly define a proposed set of generating cofibrations and generating trivial cofibrations and check that they define a model category structure by verifying the conditions in the following recognition theorem.

\begin{thm}\cite[Theorem 2.1.19]{Ho} \label{Ho}
Suppose $\Ccal$ is a category with all small colimits and limits. Suppose $W$ is a subcategory of $\Ccal$, and $I$ and $J$ are sets of maps of $\Ccal$. Then there is a cofibrantly generated model structure on $\Ccal$ with $I$ as the set of generating cofibrations, $J$ as the set of generating trivial cofibrations, and $W$ as the subcategory of weak equivalences if and only if the following conditions are satisfied.
\begin{enumerate}
\item The subcategory $W$ has the two out of three property and is closed under retracts.
\item The domains of $I$ are small relative to $I$-cell.
\item The domains of $J$ are small relative to $J$-cell.
\item $J$-cell $\subseteq W \cap I$-cof.
\item $I$-inj $\subseteq W \cap J$-inj.
\item Either $W \cap I$-cof $\subseteq J$-cof or $W \cap J$-inj $\subseteq I$-inj.
\end{enumerate}
\end{thm}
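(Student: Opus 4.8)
The plan is to prove the substantive implication, namely that conditions (1)--(6) produce the model structure; the converse follows from the standard closure properties of weak equivalences and (co)fibrations together with the very definition of a cofibrantly generated structure, so I would dispatch it quickly at the end. First I would \emph{declare} the three distinguished classes: the weak equivalences are the maps of $W$, the cofibrations are the maps in $I$-cof, and the fibrations are the maps in $J$-inj. With these definitions the axiom asserting completeness and cocompleteness is the standing hypothesis on $\Ccal$, the two-out-of-three axiom is half of (1), and the retract axiom holds because $W$ is retract-closed by (1) while $I$-cof and $J$-inj are retract-closed automatically, being defined by lifting properties.

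Next I would invoke the small object argument, which is where conditions (2) and (3) enter: they guarantee that the domains of $I$ and of $J$ are small relative to $I$-cell and $J$-cell, so the argument applies to each set and yields \emph{functorial} factorizations of an arbitrary map $f$. One factorization is $f = p \circ i$ with $i \in I$-cell $\subseteq I$-cof and $p \in I$-inj, and the other is $f = q \circ j$ with $j \in J$-cell $\subseteq J$-cof and $q \in J$-inj. These are the candidate factorizations, but they only become the required (cofibration, trivial fibration) and (trivial cofibration, fibration) factorizations once the acyclic parts are correctly identified.

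The key step, and the one I expect to be the main obstacle, is establishing the two identifications
\[ I\text{-inj} = W \cap J\text{-inj}, \qquad J\text{-cof} = W \cap I\text{-cof}. \]
The inclusions $I$-inj $\subseteq W \cap J$-inj and $J$-cof $\subseteq W \cap I$-cof are (5) and (4), the latter upgraded from $J$-cell to its retract closure $J$-cof using that $W \cap I$-cof is retract-closed. For the reverse inclusions I would run a retract argument steered by (6). Given $f \in W \cap J$-inj, factor $f = p \circ i$ as above; since $p \in I$-inj $\subseteq W$, two-out-of-three forces $i \in W \cap I$-cof. If we are in the case $W \cap I$-cof $\subseteq J$-cof, then $i \in J$-cof lifts against $f \in J$-inj, and the resulting lift exhibits $f$ as a retract of $p \in I$-inj, whence $f \in I$-inj; in the alternative case $W \cap J$-inj $\subseteq I$-inj the conclusion is immediate. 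A symmetric argument, factoring $f \in W \cap I$-cof through $J$ and lifting the trivial fibration that appears against $f$, yields $W \cap I$-cof $\subseteq J$-cof.

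With these identifications in hand the remaining axioms are formal. The factorization axiom is exactly the two functorial factorizations above, now read off as (cofibration, trivial fibration) and (trivial cofibration, fibration). For lifting, cofibrations lift against trivial fibrations because $I$-cof has the left lifting property against $I$-inj by definition, and trivial cofibrations $= J$-cof lift against fibrations $= J$-inj for the same reason. The structure is cofibrantly generated by construction, with $I$ and $J$ as generating cofibrations and generating trivial cofibrations and the required smallness supplied by (2) and (3); finally I would read the four conditions backwards from any such structure to obtain the converse implication.
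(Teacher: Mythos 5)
Your proposal is correct, but note that the paper itself does not prove this statement: it is quoted verbatim as the recognition theorem from Hovey's book \cite[Theorem 2.1.19]{Ho} and used as a black box to build the model structures on DG-operads and on 2-Cat$_I$. Your argument --- declaring the three classes as $W$, $I$-cof, $J$-inj, running the small object argument via (2) and (3), and then establishing $I\text{-inj} = W \cap J\text{-inj}$ and $J\text{-cof} = W \cap I\text{-cof}$ by the retract argument steered by whichever half of (6) holds --- is precisely the standard proof given in the cited source, so there is nothing to flag beyond the minor point that your identification of $J$-cof with the retract closure of $J$-cell itself rests on the small object argument plus the retract argument (Hovey's Corollary 2.1.15), which you have the hypotheses to invoke.
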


The model structure we construct is the following.

\begin{defin}\label{ModelStruct}
Let $F : \Acal \to \Bcal$ be a functor between DG-operads
\begin{enumerate}
\item The functor $F$ is a weak equivalence if
	\begin{enumerate}
	\item For all $n \in \N$ and $s_1, \dots, s_n, t \in \Ebold_\Acal$ the morphism of complexes
	\[ \Acal(s_1, \dots, s_n,t) \to \Bcal(F(s_1), \dots, F(s_n), F(t))\]
	is a quasi-isomorphism.
	\item The morphism $H^0(F(1)): H^0(\Acal(1)) \to H^0(\Bcal(1))$ is an equivalence of categories.
	\end{enumerate}
\item The functor $F$ is a fibration if
	\begin{enumerate}
	\item For all $n \in \N$ and $s_1, \dots, s_n,t$ the morphism of complexes
	\[ \Acal(s_1, \dots, s_n,t) \to \Bcal(F(s_1), \dots, F(s_n), F(t))\]
	is termwise surjective.
	\item The morphism $H^0(F(1)): H^0(\Acal(1)) \to H^0(\Bcal(1))$ is an isofibration, i.e. for any map $\psi$ such that $H^0(\psi)$ is an isomorphism in $H^0(\Bcal(1))$ there exists a map $\phi$ such that $H^0(\phi)$ is an isomorphism in $H^0(\Acal(1))$ and $F(\phi)=\psi$.
	\end{enumerate}
\item The functor $F$ is a cofibration if it has the left lifting property with respect to trivial fibrations.
\end{enumerate}
\end{defin}

\subsection{Generating cofibrations and generating trivial cofibrations}
Imitating Tabuada \cite{Ta} we define some specific colored DG-operads. The initial operad is denoted by $\emptyset$. Let $\Acal$ be the DG-category with one object $\{*\}$ and $\Acal(*,*)=k$. 

Let $X$ be a complex of $k$-modules. Write Ar$_m(X)$ for the colored operad with colors $\E=\{s_1, \dots, s_n, t\}$ and $\Ar_m(s_1, \dots, s_n, t)=X, \Ar_m(s_i,s_i)=k=\Ar_m(t,t)$, and all others zero. Let $S(n)$ be the complex with $k$ in degree n and zero elsewhere, and $D(n)$ the complex id:$k \to k$ in degrees $n-1$ and $n$ with zeros elsewhere. The standard map $S(n) \to D(n)$ induces a map of operads $\Ar_m(S(n)) \to \Ar_m(D(n))$. 

Define $\Hcal$ to be the DG-category with two objects 1 and 2 and whose morphisms are generated by $f \in \Hom^0(1,2)$, $g \in \Hom^0(2,1)$, $r_1 \in \Hom^{-1}(1,1)$, $r_2 \in \Hom^{-1}(2,2)$ and $r_{12} \in \Hom^{-2}(1,2)$ subject to relations $df=dg=0, dr_1=gf-\id_1, dr_2=fg-\id_2$ and $dr_{12}=fr_1-r_2f$. Observe that the category $H^0(\Hcal)$ is the category with two objects 1 and 2 and morphisms $\Hom(1,1)=k=\Hom(2,2)$, $\Hom(1,2)=k \cdot f$ and $\Hom(2,1)=k \cdot g$ and relations $fg=\id_2$ and $gf=\id_1$. Hence, it is the category with two objects and an isomorphism between them.

\begin{lemma}\label{HcalMap}
Let  $\Ccal$ be a DG-category. The existence of a functor $\Hcal \to \Ccal$ is equivalent to the existence of a morphism $f: c_1 \to c_2$ of degree 0 in $\Ccal$ with $df=0$ such that its cohomology class $[f] \in H^0(\Ccal)(c_1, c_2)$ is an isomorphism.
\end{lemma}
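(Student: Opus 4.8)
The statement is an equivalence, so the plan is to prove both implications by producing explicit data in each direction. For the forward direction, suppose $\Phi : \Hcal \to \Ccal$ is a DG-functor. Set $c_1 = \Phi(1)$, $c_2 = \Phi(2)$ and $f = \Phi(f)$. Since $\Phi$ commutes with differentials and $df = 0$ in $\Hcal$, the morphism $f$ is closed of degree $0$. Applying $\Phi$ to the relations $dr_1 = gf - \id_1$ and $dr_2 = fg - \id_2$ shows that $\Phi(g)f - \id_{c_1}$ and $f\Phi(g) - \id_{c_2}$ are boundaries, so $[\Phi(g)]$ is a two-sided inverse of $[f]$ in $H^0(\Ccal)$; in particular $[f]$ is an isomorphism. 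This direction requires no choices.

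For the reverse direction, suppose $f : c_1 \to c_2$ is closed of degree $0$ with $[f]$ invertible in $H^0(\Ccal)$. I would build $\Phi$ by assigning $\Phi(1) = c_1$, $\Phi(2) = c_2$, $\Phi(f) = f$, and then choosing images for the remaining generators so that all five relations hold. First choose a closed degree-$0$ representative $g_0$ of $[f]^{-1}$ and set $\Phi(g) = g_0$; then $g_0 f - \id_{c_1}$ and $f g_0 - \id_{c_2}$ are boundaries, so one may pick $\rho_1 \in \Ccal^{-1}(c_1,c_1)$ and $\rho_2 \in \Ccal^{-1}(c_2,c_2)$ with $d\rho_1 = g_0 f - \id_{c_1}$ and $d\rho_2 = f g_0 - \id_{c_2}$, and set $\Phi(r_1) = \rho_1$, $\Phi(r_2) = \rho_2$.

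The only remaining relation, and the crux of the proof, is to produce $\Phi(r_{12}) \in \Ccal^{-2}(c_1,c_2)$ with $d\Phi(r_{12}) = f\rho_1 - \rho_2 f$. First one checks that the right-hand side is closed: a Leibniz computation using $df = 0$ together with the formulas for $d\rho_1$ and $d\rho_2$ gives $d(f\rho_1 - \rho_2 f) = (fg_0 f - f) - (fg_0 f - f) = 0$. Thus $f\rho_1 - \rho_2 f$ defines a class $\omega \in H^{-1}(\Ccal)(c_1,c_2)$, and $\Phi(r_{12})$ exists precisely when $\omega = 0$, which need not hold for the naive choices. The key step is therefore to rechoose $\rho_1$ so as to kill $\omega$: replacing $\rho_1$ by $\rho_1 + z_1$ for any closed $z_1 \in \Ccal^{-1}(c_1,c_1)$ preserves $d\rho_1 = g_0 f - \id_{c_1}$ and changes the class to $\omega + [f][z_1]$. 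Since $[f]$ is invertible, left multiplication by $[f]$ is an isomorphism $H^{-1}(c_1,c_1) \isomap H^{-1}(c_1,c_2)$ with inverse given by left multiplication by $[g_0]$, so taking $[z_1] = -[g_0]\,\omega$ forces the new class to vanish. With this adjusted $\rho_1$ the element $f\rho_1 - \rho_2 f$ is a genuine boundary, and any primitive serves as $\Phi(r_{12})$.

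I expect the main obstacle to be exactly this last step. All relations except the one for $r_{12}$ are satisfied by naive choices, and it is only in making $f\rho_1 - \rho_2 f$ exact that invertibility of $[f]$ is genuinely used, via the fact that composition with $[f]$ induces isomorphisms on cohomology. Once $\Phi$ is defined on the generators, its compatibility with differentials and composition holds by construction, since $\Hcal$ is freely generated subject only to the listed relations; hence no further verification is needed.
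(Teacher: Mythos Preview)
Your proof is correct. The forward direction is immediate, and in the reverse direction your correction step is the heart of the matter: the naive choices of $\rho_1,\rho_2$ need not make $f\rho_1-\rho_2 f$ exact, and you fix this by adjusting $\rho_1$ by a closed element, using that left multiplication by $[f]$ is an isomorphism on $H^{-1}$ (which follows from $[g_0][f]=[\id]$ in $H^0$ and the Leibniz rule, as you implicitly use). The final remark that $\Hcal$ is semifree on the listed generators, so that compatible values on generators determine a DG-functor, is also correct.

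The paper takes a different route. It Yoneda-embeds $\Ccal$ into $\Ccal^{\mathrm{op}}$-mod so that cones exist, and then invokes the classical fact that a closed degree-$0$ map $f$ is a homotopy equivalence (equivalently, $[f]$ is invertible in $H^0$) if and only if $\mathrm{Cone}(f)$ is nullhomotopic. Writing a nullhomotopy of $\mathrm{Cone}(f)$ as a $2\times 2$ matrix $\left(\begin{smallmatrix} r_1 & g \\ r_{12} & r_2 \end{smallmatrix}\right)$ and expanding $HD+DH=\id$ yields exactly the four relations defining $\Hcal$; via Yoneda these matrix entries are genuine morphisms in $\Ccal$. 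So the existence of $r_{12}$ is hidden inside the quoted classical fact about contractibility of cones of homotopy equivalences, whose standard proof involves essentially the same correction you perform by hand. Your argument is thus more self-contained and avoids the Yoneda detour; the paper's argument is shorter but leans on a known lemma and gives a conceptual reason (contractibility of the cone) for why all the data fit together.
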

\begin{proof}
Since not all DG-categories allow cones we use the Yoneda embedding $\Ccal \hookrightarrow \Ccal^{\text{op}}$-mod and take cones in the module category. Recall that if $f:K^\bullet \to L^\bullet$is a map of complexes, then the cone of $f$ is the complex Cone$^\bullet(f) := K^{\bullet+1} \oplus L^\bullet$ with the differential $D:=(-d_K, f + d_L)$. The chain map $f$ is a homotopy equivalence if and only if Cone$^\bullet(f)$ is nullhomotopic. A nullhomotopy on Cone$^\bullet(f)$ is a map $H : \text{Cone}^\bullet(f) \to \text{Cone}^{\bullet-1}(f)$ satisfying $HD+DH=\id_\text{Cone}$. Write
\[H:=\begin{bmatrix} r_1 & g \\ r_{12} & r_2 \end{bmatrix} \]
for some maps $r_1:K^{\bullet+1} \to K^\bullet$, $r_2: L^\bullet \to L^{\bullet-1}$, $g:L^\bullet \to K^\bullet$ and $r_{12}: K^{\bullet+1} \to L^{\bullet-1}$. The condition $HD+DH=\id_\text{Cone}$ can then be rewritten as the following four equations
\begin{align}
&dr_1=r_1 d_K + d_K r_1 = gf-id_K,\\
&dg=gd_L- d_K g = 0,\\
&dr_{12}=r_{12}d_K - d_Lr_{12} = r_2 f + fr_1,\\
&dr_2=r_2 d_L +d_L r_2 =id_L-fg .
\end{align}
Hence, if $df=0$ then a nullhomotopy is equivalent to a morphism $\Hcal \to \text{Cone}(f)$.
\end{proof}

The following definition is a natural generalization of Tabuada.

\begin{defin}
\begin{enumerate}
\item The set $I$ of generating cofibrations is the set of maps $\{\emptyset \to \Acal, \Ar_m(S_{n-1})\to \Ar_m(D_n); m,n \in \N\}$.
\item The set $J$ of generating trivial cofibrations is the set of maps $\{\Acal \to \Hcal, \Ar_m(0) \to \Ar_m(D_n) ; m,n \in \N \}$.
\end{enumerate}
\end{defin}

The follow lemma shows that the definition of $J$ is compatible with the expected model category structure from \ref{ModelStruct}.

\begin{lemma}\label{FibrationRLP}
A functor is a fibration if and only if it has the right lifting property with respect to $J$.
\end{lemma}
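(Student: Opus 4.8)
The plan is to split $J$ into its two families and match each to one of the two defining conditions of a fibration in Definition~\ref{ModelStruct}. Throughout, write $F:\Pcal\to\mathcal Q$ for the functor in question. First consider the maps $\Ar_m(0)\to\Ar_m(D_n)$. A map $\Ar_m(0)\to\Pcal$ is nothing but a choice of colors $a_1,\dots,a_m,b$ in $\Pcal$, since the only nonzero operations of $\Ar_m(0)$ are identities. A bottom map $\Ar_m(D_n)\to\mathcal Q$ completing a square over $F$ is then the same as an element $y\in\mathcal Q(F(a_1),\dots,F(a_m),F(b))^{\,n-1}$, namely the image of the degree-$(n-1)$ generator of $D_n$, and a lift $\Ar_m(D_n)\to\Pcal$ is exactly a preimage $x\in\Pcal(a_1,\dots,a_m,b)^{\,n-1}$ of $y$, the compatibility with the differential being automatic. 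Running over all $m$, all colors, and all $n$, the right lifting property against this family is therefore equivalent to the termwise surjectivity of every map $\Pcal(a_1,\dots,a_m,b)\to\mathcal Q(F(a_1),\dots,F(a_m),F(b))$, which is the surjectivity half of Definition~\ref{ModelStruct}. This settles that half in both directions at once.

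Next consider $\Acal\to\Hcal$. By Lemma~\ref{HcalMap} a functor $\Hcal\to\Ccal$ into a DG-category is the same datum as a closed degree-$0$ morphism whose $H^0$-class is invertible, together with a contracting nullhomotopy of its cone (taken in the module category, exactly as in the proof of that lemma). A lifting square with top $\Acal\to\Pcal$ (a choice of object $c\in\Pcal$) and bottom $\Hcal\to\mathcal Q$ (an $H^0$-isomorphism $\psi:F(c)\to d$ together with a chosen nullhomotopy $\bar H$ of $\operatorname{Cone}(\psi)$) thus asks us to produce, upstairs, a closed $f:c\to c'$ with $[f]$ invertible and $F(f)=\psi$, together with a nullhomotopy of $\operatorname{Cone}(f)$ mapping onto $\bar H$. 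The direction ``right lifting property $\Rightarrow$ fibration'' is now immediate: discarding the homotopy data from any lift leaves precisely a morphism $\phi$ with $[\phi]$ invertible and $F(\phi)=\psi$, which is the isofibration condition.

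The substance is the converse. Assuming both fibration conditions, the isofibration condition produces the closed $f:c\to c'$ with $F(f)=\psi$ and $[f]$ invertible; the remaining task---and the main obstacle---is to lift the \emph{prescribed} nullhomotopy $\bar H$, not merely to exhibit some homotopy data for $f$ (whose existence is already guaranteed by Lemma~\ref{HcalMap}). Here I would exploit contractibility: since $[f]$ and $[\psi]$ are invertible, both $\operatorname{Cone}(f)$ and $\operatorname{Cone}(\psi)$ are contractible, so their endomorphism complexes are acyclic (a nullhomotopy makes the identity a coboundary, hence the unital cohomology ring has $1=0$ and vanishes). These endomorphism complexes are finite sums, up to shifts, of the hom-complexes of $\Pcal(1)$ on $\{c,c'\}$, respectively of $\mathcal Q(1)$ on $\{F(c),d\}$, so by the surjectivity condition the induced map $F_\ast$ between them is termwise surjective; its kernel then sits in a short exact sequence of complexes with acyclic outer terms and is itself acyclic by the long exact sequence.

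Finally I would choose any nullhomotopy $H'$ of $\operatorname{Cone}(f)$, note that $F_\ast(H')-\bar H$ is a degree-$(-1)$ cocycle downstairs, lift it termwise to an upstairs cochain whose differential lands in the kernel, and correct by a primitive furnished by the acyclicity of the kernel to obtain a \emph{closed} lift $z$ with $F_\ast(z)=F_\ast(H')-\bar H$. Then $H:=H'-z$ is a nullhomotopy of $\operatorname{Cone}(f)$ with $F_\ast(H)=\bar H$, and Lemma~\ref{HcalMap} repackages the pair $(f,H)$ as the desired lift $\Hcal\to\Pcal$. Combining the two families, right lifting property against all of $J$ is equivalent to the conjunction of the surjectivity and isofibration conditions, i.e.\ to being a fibration.
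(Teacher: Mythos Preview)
Your argument is correct and in fact more complete than the paper's own proof. Both treat the family $\Ar_m(0)\to\Ar_m(D_n)$ identically (RLP $\Leftrightarrow$ termwise surjectivity), and both handle ``RLP against $\Acal\to\Hcal$ $\Rightarrow$ isofibration'' by passing to $H^0$. The difference lies in the converse for $\Acal\to\Hcal$. The paper, after using the isofibration condition to lift $\psi$ to a closed $\phi$ with $[\phi]$ invertible, simply writes ``By Lemma~\ref{HcalMap} this is equivalent to providing a map $\Hcal\to P$ making the diagram commutative''---but Lemma~\ref{HcalMap} only guarantees \emph{some} functor $\Hcal\to P$ extending $\phi$, not one lying over the prescribed $\Hcal\to Q$. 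You correctly isolate this as the real content and supply the missing step: using the termwise-surjectivity half of the fibration hypothesis together with the acyclicity of $\End(\operatorname{Cone}(f))$, $\End(\operatorname{Cone}(\psi))$, and hence of $\ker F_*$, to correct an arbitrary nullhomotopy $H'$ upstairs to one mapping onto the given $\bar H$. Your route thus couples the two fibration conditions for the $\Acal\to\Hcal$ lift, whereas the paper asserts (without full justification) that isofibration alone suffices; since the lemma is about RLP against all of $J$ and the full fibration condition, this coupling is entirely legitimate and yields a cleaner, self-contained argument.
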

\begin{proof}
First consider diagrams of the form
\[ \xymatrix{\Ar_m(0) \ar[d] \ar[r]^\alpha & P \ar[d]^F \\ \Ar_m(D_n) \ar[r]^\beta & Q}\]
Such a diagram is equivalent to a choice of $\beta(a) \in Q^n$. Hence, the right lifting property with respect to this morphism is equivalent to $F$ being surjective.

Let $F : P \to Q$ be a functor with he right lifting property with respect to $\Acal \to \Hcal$. A diagram induces a diagram on the underlying homotopy categories
\[ \xymatrix{H^0(\Acal) \ar[d] \ar[r] & H^0(P(1)) \ar[d]^{H^0(F(1))} \\ H^0(\Hcal) \ar[r] \ar@{-->}[ur] & H^0(Q(1))}\]
Since $H^0(\Hcal)$ is the category with two objects and an isomorphism between them the induced lifting property is equivalent to $H^0(F(1))$ being an isofibration. 

Assume now that $F$ is an isofibration. Given a diagram
\[ \xymatrix{\Acal \ar[d] \ar[r] & P \ar[d]^{F} \\ \Hcal \ar[r] \ar@{-->}[ur] & Q}\]
we need to construct a lifting. The diagram is given by $c \in \text{Ob}(P)$ and $a,b \in \text{Ob}(Q)$ with maps between them and $F(c)=a$. When passing to the homotopy categories the previous argument gives that $a \iso b$ in $H^0(Q(1))$. Since $F$ is an isofibration the exists $d \in \text{Ob}(H^0(P(1)))=\text{Ob}(P)$ such that $F(d)=b$ and an isomorphism $\phi : c \isomap d$ in $H^0(P(1))$. By lemma \ref{HcalMap} this is equivalent to providing a map $\Hcal \to P$ making the diagram commutative.
\end{proof}

\subsection{Proof of main theorem \ref{ModelCatOperad}}\label{ProofMainThm}
In this section we verify the conditions of the recognition theorem \ref{Ho}. The verification is broken down into a sequence of lemmas.

\begin{lemma} \label{Jcell}
$J$-cell $\subseteq W$.
\end{lemma}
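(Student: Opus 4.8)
The statement to prove is that every morphism in $J$-cell is a weak equivalence. Let me think about what $J$-cell means and what the two types of generators look like.

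The set $J$ consists of two families: $\{\Acal \to \Hcal\}$ and $\{\Ar_m(0) \to \Ar_m(D_n)\}$. A relative $J$-cell complex is a transfinite composition of pushouts of coproducts of these generators. So I need to show: (1) each generator is a weak equivalence, (2) pushouts of generators along arbitrary maps remain weak equivalences, and (3) transfinite composition preserves weak equivalences.

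Let me think through the pieces.
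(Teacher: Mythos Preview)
Your outline is structurally correct and matches the paper's approach: the paper also reduces to showing that pushouts of maps in $J$ are weak equivalences, using that $W$ is closed under transfinite composition. Note that your step (1) is redundant, since a generator is itself a pushout (along a map from the domain), so (2) subsumes it.

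However, what you have written is only a plan; the actual content of the lemma lies entirely in step (2), and you have not indicated how you would carry it out. The pushout of $\Ar_m(0) \to \Ar_m(D_n)$ is relatively straightforward (one adjoins a contractible complex freely, so polyhoms change by tensoring with something acyclic), but the pushout of $\Acal \to \Hcal$ is genuinely delicate. The paper handles it by factoring $\Acal \to \Hcal$ through the one-object category $\Hcal_0$ with $\Hcal_0(*,*)=\Hcal(1,1)$, thus splitting the pushout into two stages. For the second stage $\Hcal_0 \to \Hcal$ one invokes Lemma~\ref{FullyFaithful} (pushouts along fully faithful inclusions are fully faithful on polyhoms). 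For the first stage $\Acal \to \Hcal_0$ one proves that $\Hcal_0 \cong k \oplus \bar{\Hcal}_0$ with $\bar{\Hcal}_0$ exact, so that the pushout amounts to tensoring with an acyclic complex. The essential-surjectivity part of the weak equivalence condition is then reduced to the DG-category case treated by Tabuada.

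Without these ingredients---or some substitute for them---your plan cannot be completed; in particular there is no general principle that pushouts of weak equivalences are weak equivalences, so step (2) requires a hands-on argument specific to these generators.
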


We need to show that transfinite compositions of pushouts of the maps in $J$ are weak equivalences. For the proof we need the following lemma.

\begin{lemma}\label{FullyFaithful}
Let $\Ccal$ be a DG-category with one object and $\Dcal$ a DG-category with two objects (considered as operads) and $\alpha : \Ccal \to \Dcal$ a fully-faithful functor. Then for any pushout diagram of operads
\[ \xymatrix{\Ccal \ar[d]^{\alpha} \ar[r]^\gamma & P \ar[d]^\beta \\ \Dcal \ar[r] & Q}\]
the morphisms
\[ P^{n}(s_1, \dots, s_m,t) \to Q^{n}(\beta(s_1), \dots, \beta(s_m), \beta(t)) \qquad \forall s_1, \dots, s_m, t \in \text{Ob} P. \]
are quasi-isomorphisms.
\end{lemma}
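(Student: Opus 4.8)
The plan is to compute the pushout $Q$ explicitly and then to show that the asserted map is the inclusion of a quasi-isomorphic subcomplex, the complement being contracted by the homotopies that witness full faithfulness. Write $c_0$ for the object of $\Ccal$, let $d_1=\alpha(c_0)$ and $d_2$ be the two objects of $\Dcal$, and set $p_0=\gamma(c_0)\in\text{Ob}(P)$. In the pushout the set of colours is $\text{Ob}(P)$ together with one new colour $q$, the image of $d_2$, while $d_1$ is identified with $p_0$. Full faithfulness means that $\Ccal(c_0,c_0)\to\Dcal(d_1,d_1)$ is a quasi-isomorphism; since we work over a field, where every quasi-isomorphism of complexes is a chain homotopy equivalence, I may fix a homotopy inverse $\sigma\colon\Dcal(d_1,d_1)\to\Ccal(c_0,c_0)$ together with explicit homotopies witnessing $\alpha\sigma\simeq\id$ and $\sigma\alpha\simeq\id$. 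This homotopy datum is the only homotopical input, and it is what will drive the contraction below.

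Next I would give a normal form for $Q(\beta(s_1),\dots,\beta(s_m),\beta(t))$ with $s_1,\dots,s_m,t\in\text{Ob}(P)$, realising $Q$ as an amalgamated free product of operads over $\Ccal$: an element is a $P$-operation whose wires are decorated by chains of arity-one morphisms taken from $P(1)$ and from the new morphisms $\Dcal(d_1,d_2)$, $\Dcal(d_2,d_1)$, $\Dcal(d_2,d_2)$ and the complement of $\Ccal(c_0,c_0)$ in $\Dcal(d_1,d_1)$. The point of requiring the external colours to lie in $\text{Ob}(P)$ is that the new colour $q$ can then occur only on internal wires: every occurrence of $q$ sits inside an excursion that begins with a factor of $\Dcal(d_1,d_2)$, runs through factors of $\Dcal(d_2,d_2)$, and ends with a factor of $\Dcal(d_2,d_1)$, and whose $\Dcal$-composite is an endomorphism of $d_1$, i.e. an element of $\Dcal(d_1,d_1)$.

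I would then organise these excursions by a filtration and run a spectral-sequence/associated-graded argument, set up so that the differential component which \emph{closes} an excursion up into $\Dcal(d_1,d_1)$ survives to the first page. On the associated graded each closed excursion is replaced by its value in $\Dcal(d_1,d_1)$; applying $\sigma$ rewrites that value, up to the chosen homotopies, as a $\Ccal(c_0,c_0)$-decoration, which is then absorbed into the adjacent $P$-operation through $\gamma$. Absorbing all excursions in this way collapses the decorated $P$-operation onto a plain element of $P(s_1,\dots,s_m,t)$, so the subcomplex $P(s_1,\dots,s_m,t)\hookrightarrow Q(\beta(s_1),\dots,\beta(s_m),\beta(t))$ accounts for all the cohomology. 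Since the filtration is exhaustive and bounded below, convergence gives the desired quasi-isomorphism.

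The main obstacle is precisely this contraction step, and it is where the argument goes beyond Tabuada's DG-category computation in two ways. First, one must make the excursion-closing precise: this is a bar-type computation showing that the complex $\bigoplus_{k\ge 0}\Dcal(d_2,d_1)\otimes\Dcal(d_2,d_2)^{\otimes k}\otimes\Dcal(d_1,d_2)$ of $q$-excursions maps, via the homotopies witnessing full faithfulness, quasi-isomorphically onto its image in $\Dcal(d_1,d_1)\simeq\Ccal(c_0,c_0)$, and the filtration must be chosen so that exactly this map is the associated-graded differential rather than being truncated away. Second, the contraction must be compatible with the full operadic composition, which is tree-shaped and of arbitrary arity rather than a mere composition of morphisms; I would therefore check that the homotopy defined on a single decorated wire is compatible with the operadic module structure, and apply it simultaneously on all wires carrying an excursion. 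Granting these two points, the filtration argument yields that $P(s_1,\dots,s_m,t)\to Q(\beta(s_1),\dots,\beta(s_m),\beta(t))$ is a quasi-isomorphism for all $s_1,\dots,s_m,t\in\text{Ob}(P)$.
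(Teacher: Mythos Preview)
Your proposal rests on a misreading of the hypothesis. In this paper, ``fully faithful'' means that $\alpha\colon\Ccal(c_0,c_0)\to\Dcal(d_1,d_1)$ is an \emph{isomorphism} of complexes, not merely a quasi-isomorphism. This is how the paper actually uses the hypothesis in its proof (``since $\alpha$ is fully-faithful there is a unique $k\in\Ccal(0,0)$ such that $\alpha(k)=h'_1h$''), and it is what holds in the only application, to the inclusion $\Hcal_0\hookrightarrow\Hcal$ where $\Hcal_0$ is by construction the full subcategory on the object~$1$.

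With the correct hypothesis the argument is far more elementary than what you outline. Following Cisinski--Moerdijk, one writes down the pushout explicitly: the colours of $Q$ are $\text{Ob}(P)\sqcup\{t\}$, and because $\Dcal(d_1,d_1)=\alpha(\Ccal(c_0,c_0))$ strictly, every $\Dcal(d_1,d_1)$-factor arising on an internal wire is absorbed into $P$ via $\gamma$ on the nose, not up to homotopy. The normal form then shows that for $c_1,\dots,c_m,c\in\text{Ob}(P)$ one has $Q(c_1,\dots,c_m,c)=P(c_1,\dots,c_m,c)$ literally; $\beta$ is fully faithful, hence in particular a quasi-isomorphism on these polyhoms. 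No homotopy inverses, filtrations, or spectral sequences are needed.

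What you have sketched is an attack on a genuinely harder statement --- the same conclusion when $\alpha$ is only a quasi-isomorphism on Homs --- which is not what the lemma claims and not what the application requires. Even for that stronger goal, the parts you flag as ``the main obstacle'' (arranging the filtration so that exactly the excursion-closing map survives to the associated graded, and propagating the contracting homotopy compatibly through tree-shaped operadic composition) are where essentially all the content would lie, and they remain at the level of a plan rather than an argument.
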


\begin{proof}
The proof is identical to the proof of lemma 1.29 in \cite{CM} (they work in the setting of simplicial symmetric operads but the same proof works in our setting). They calculate the pushout explicitly. Since $\Ccal$ only have one object $\{0\}$ and $\Dcal$ has two objcets $\{s,t\}$ with $\alpha(0)=s$ the set of objects $\E_Q$ is the disjoint union $\E_P \sqcup \{t\}$. The morphisms $Q(c_1, \dots, c_n, c)$ depnds on where $t$ occurs. If $c_1,\dots c_n, c \in \E_P$ then $Q(c_1, \dots, c_n, c)=P(c_1, \dots, c_n, c)$ and $Q(t,t)=\Dcal(t,t)$. In mixed cases morphisms are represented by triples $h \cdot p \cdot (h_1, \dots, h_b)$, where $(h_1, \dots, h_n) \in \Dcal(1,0)^b$,  $p \in P(c_1, \dots,c_i, \gamma(0),c_{i+2} \dots \gamma(0), \dots,c)$ with $\gamma(0)$ occuring $b$ times, and $h \in \Dcal(0,1)$ if $c=t$ and trivial otherwise. With the following relations for all $k \in \Ccal(0,0)$
\begin{gather}
h \alpha(k) \cdot p \cdot (h_1, \dots, h_b)=h \cdot \gamma(k) p \cdot (h_1, \cdots, h_b)\\
h \cdot p \cdot (h_1, \dots, \alpha(k)h_i, \dots, h_b)=h \cdot p \circ_i \gamma(k) \cdot (h_1, \dots , h_b),
\end{gather}
where $\circ_i$ denotes inserting in the $i$'th place. Consider $h' \cdot p' \cdot (h'_1, \dots, h'_{b'})$ and $h \cdot p \cdot (h_1, \dots , h_b)$. Since $\alpha$ is fully-faithful there is a unique $k \in \Ccal(0,0)$ such that $\alpha(k)=h'_1 h \in \Dcal(0,0)$. We define the composition as
\[(h' \cdot p' \cdot (h'_1, \dots, h'_{b'})) \circ_{n+1} (h \cdot p \cdot (h_1, \dots , h_b))=h' \cdot (p' \circ_{n+1} \gamma(k)p) \cdot (h_1, \dots, h_b, h'_2, \dots, h'_{b'}) \]
The morphism $\beta$ is identity on $P(c_1, \dots, c_n, c)$ for $c_i,c \in \E_P$ so it is fully-faithful. See \cite{CM} for a proof that this defines the pushout.
\end{proof}

\begin{proof}[Proof of lemma \ref{Jcell}]
The class of weak equivalences is closed under transfinite compositions so it suffices to show that pushouts of $J$ are weak equivalences. Assume we have a pushout diagram
\[ \xymatrix{\Acal \ar[d] \ar[r]^\delta & P \ar[d]^\beta \\ \Hcal \ar[r] & Q}\]
We want to show that $\beta$ is a weak equivalence. Let $\Hcal_0$ be the category with one object and $\Hcal_0(*,*)=\Hcal(1,1)$. The pushout can be decomposed into two pushouts
\[ \xymatrix{\Acal \ar[d] \ar[r]^\delta & P \ar[d]^\phi \\ \Hcal_0 \ar[r] \ar[d]& P' \ar[d]^\pi \\ \Hcal \ar[r] & Q}\]
The functor $\Hcal_0 \to \Hcal$ is fully faithful so it follows from lemma \ref{FullyFaithful} that $\pi$ is a quasi-isomorphism on polyhoms. To check that $\phi$ is a weak equivalence we need the following observation.

\begin{claim}
The DG-category $\Hcal_0$ is the direct sum of $k$ and an exact complex $\bar{\Hcal}_0$.
\end{claim}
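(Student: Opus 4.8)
The plan is to reduce the claim to a cohomology computation and then invoke the fact that every complex over a field splits. Write $A := \Hcal_0(*,*) = \Hom_\Hcal(1,1)$. Over the field $k$, any cochain complex $C$ decomposes (non-canonically) as $C \cong H^\bullet(C) \oplus E$ with $H^\bullet(C)$ carrying the zero differential and $E$ acyclic: in each degree pick a complement $G^n$ of the coboundaries $B^n$ inside the cocycles $Z^n$ and a complement $L^n$ of $Z^n$ in $C^n$, so that $E^n := B^n \oplus L^n$ is an acyclic subcomplex (the differential maps $L^n$ isomorphically onto $B^{n+1}$) and $G := \bigoplus_n G^n$ is a complementary subcomplex with vanishing differential. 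Hence it suffices to show $H^\bullet(A) = k$, concentrated in degree $0$ and generated by $[\id_1]$. Since $d\,\id_1 = 0$ and $[\id_1]$ generates $H^0(A)$, I may then take $G^0 = k\,\id_1$, giving $A = k\,\id_1 \oplus \bar{\Hcal}_0$ with $\bar{\Hcal}_0$ exact, which is precisely the assertion.

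Next I would isolate exactly what remains. The category $\Hcal$ is freely generated by $f,g,r_1,r_2,r_{12}$, so $A$ is the span of composable loops at $1$ in this free graded category, with differential determined by $dr_1 = gf - \id_1$, $dr_2 = fg - \id_2$, $dr_{12} = fr_1 - r_2 f$ and the Leibniz rule. As all generators sit in non-positive degrees, $A^n = 0$ for $n > 0$, so $H^{>0}(A) = 0$ automatically; and $H^0(A) = k\,[\id_1]$ is exactly the computation of $H^0(\Hcal)(1,1)$ recorded just before the claim. Thus the entire content is the vanishing $H^n(A) = 0$ for $n < 0$, i.e.\ exactness of the negative-degree part of $A$.

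For that vanishing I would construct a contracting homotopy, seeded by left multiplication by $r_1$. Setting $h(w) := r_1 w$ and using $|r_1| = -1$, one computes $d(r_1 w) + r_1\,dw = (gf - \id_1)w$, so that $dh + hd$ is left multiplication by $gf - \id_1$; in particular left multiplication by $gf$ is chain-homotopic to the identity on $A$ (and symmetrically $r_2,r_{12}$ supply the homotopies involving $fg$ and $fr_1$, e.g.\ $d(r_1^2) = gf\,r_1 - r_1\,gf$ and $d(g\,r_{12}) = gf\,r_1 - g\,r_2\,f$, which already exhibit the low-degree cocycles as boundaries). This one homotopy does not contract $A$ on the nose, since iterating it produces the divergent telescope $\sum_i (gf)^i r_1(-)$, reflecting the surviving class $[\id_1]$. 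I would therefore organize the computation by the filtration of $A$ by word length: to leading order the differential becomes the ``Koszul'' differential $r_1 \mapsto gf$, $r_2 \mapsto fg$, $r_{12} \mapsto fr_1 - r_2 f$, whose underlying degree-$0$ reductions $gf \mapsto \id_1$, $fg \mapsto \id_2$ form a confluent, terminating rewriting system (the only overlaps $fgf$, $gfg$ resolve, and $r_{12}$ is precisely the higher homotopy resolving the $f$-side overlap). Reinterpreting the seed homotopy as length-reducing rewriting makes all sums finite and contracts the associated graded onto $k\,\id_1$; as the filtration is exhaustive and bounded below in each fixed degree, the resulting spectral sequence degenerates and yields $H^{<0}(A) = 0$.

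The main obstacle is exactly this last step: upgrading the single homotopy $h = r_1(-)$ into an honest contraction of $\bar{\Hcal}_0$. Because $A$ is an infinite-dimensional free path complex, the contraction must be made length-reducing (hence locally finite) rather than length-increasing, and the real work is verifying that the rewriting system built from $r_1,r_2,r_{12}$ is complete — equivalently, that no negative-degree cocycle beyond the coboundaries survives — together with the attendant sign bookkeeping in the Leibniz rule and the convergence of the spectral sequence. Everything else, namely the splitting over $k$ and the reduction to this single computation, is formal.
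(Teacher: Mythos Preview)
Your reduction is correct and your decomposition ends up coinciding with the paper's, but the route differs. The paper does not invoke the general splitting of complexes over a field; it uses the algebra structure directly: the projection $\epsilon\colon\Hcal_0(0,0)\to H^0(\Hcal_0(0,0))\cong k$ is a DG-algebra homomorphism split by the unit $1\mapsto\id_1$, so $\bar\Hcal_0:=\ker\epsilon=\bigoplus_{n<0}\Hcal_0^n(0,0)\oplus d\Hcal_0^{-1}(0,0)$ is a two-sided \emph{ideal} and the splitting $\Hcal_0\cong k\oplus\ker\epsilon$ is one of DG-algebras, not merely of complexes. That ideal property is what the surrounding proof actually exploits in the next step, and your vector-space splitting obscures it (even though, with the particular choice $G^0=k\,\id_1$, you recover exactly the same subcomplex). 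Conversely, you are more explicit than the paper about the genuine content of the claim: exactness of $\ker\epsilon$ is equivalent to $H^{<0}(\Hcal_0(0,0))=0$, which the paper simply asserts without argument (implicitly relying on the known fact that $\Hcal$ resolves the walking-isomorphism category), whereas you correctly isolate this vanishing as the crux and sketch a contracting-homotopy/rewriting argument for it. That sketch is in the right spirit, but be aware that the naive word-length filtration is not compatible with $d$ (for instance $dr_1=gf-\id_1$ has components of lengths $2$ and $0$), so the filtration and its convergence must be set up more carefully than you indicate.
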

\begin{proof}
Write $\Hcal_0(0,0)=\bigoplus_{n \leq 0} \Hcal^n_0(0,0)$. Notice that
\[ H^0(\Hcal_0(0,0))=\Hcal^0_0(0,0)/ d\Hcal^{-1}(0,0) \iso k \]
The morphism $\Acal(0,0)=k \to \Hcal_0(0,0)$ is given by $k \cdot 1 \mapsto k \cdot \id$ fits into the short exact sequence
\[ k \to \Hcal_0(0,0) \stackrel{\epsilon}{\to} \Hcal^0_0(0,0)/d \Hcal^{-1}_0(0,0) \iso k \]
Hence, $\ker(\epsilon)=\bigoplus_{n<0} \Hcal^n(0,0) \oplus d \Hcal^{-1}(0,0)$ is a 2-sided ideal and an exact complex. Since $\Hcal_0(0,0) \iso k \oplus \ker(\epsilon)$ this finishes the proof of the claim.
\end{proof}

\begin{proof}
The pushout have $\E_{P'}=\E_P$ and morphisms are represented by by compositions of $h \cdot p$, where $h \in \Hcal_0(0,0)$ and $p \in P(c_1,\dots,c_n,\delta(0))$ with relations
\[h \beta(0) \cdot p=h \cdot \delta(0)p. \]
Composition is given by $(h \cdot p) \circ (h' \cdot p')=hh' \cdot pp'$. Hence, $P'=P \otimes \Hcal_0 /k \cdot \id \iso P \otimes \ker(\epsilon)$. It follows from the claim that $\phi$ is a quasi-isomorphism on polyhoms.
\end{proof}

Thus, $\phi$ is a quasi-isomorphism on polyhoms.

The second condition for a weak equivalence is in terms of the underlying DG-categories. Restricting a pushout diagram of operads to the underlying DG-categories gives a pushout diagram of DG-categories. Hence, it follows from the corresponding statement in Tabuada \cite[Lemme 2.2]{Ta}.

Assume we have a pushout diagram
\[ \xymatrix{\Ar_m(0)\ar[d] \ar[r]^\alpha & P \ar[d]^F \\ \Ar_m(D_n) \ar[r] & Q}\]
For $m \neq 1$ we have $\Ar_m(0)(1)=\Ar_m(D_n)(1)$ so the pushout $F(1)$ is the identity. 
Since $\Ar_m(0)$ gives no relations on polyhoms we have $Q=P\otimes \Ar_m(D_n)$ for polyhoms. Since $D_n$ is exact $F$ is a quasi-isomorphism on polyhoms.
\end{proof}

\begin{defin}
The class of morphisms which are surjective on objects, surjective on polyhoms and quasi-isomorphisms on polyhom is denoted by \bf{Surj}.
\end{defin}

\begin{lemma} \label{IinjSurj}
$I$-inj=\bf{Surj}.
\end{lemma}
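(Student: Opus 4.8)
The plan is to prove both inclusions at once by computing the right lifting property against each generator in $I$ separately and matching the three defining conditions of \textbf{Surj} one at a time. A map $F:P\to Q$ lies in $I$-inj exactly when it has the right lifting property with respect to $\emptyset\to\Acal$ and with respect to every $\Ar_m(S_{n-1})\to\Ar_m(D_n)$, so I would treat these two families in turn. First I would dispose of objects: a morphism $\Acal\to Q$ is just a choice of object $q$ together with the unit map $k\to Q(q,q)$, and a lift in
\[
\xymatrix{\emptyset\ar[d]\ar[r] & P\ar[d]^F\\ \Acal\ar[r]\ar@{-->}[ur] & Q}
\]
is precisely an object $p$ of $P$ with $F(p)=q$. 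Hence the right lifting property against $\emptyset\to\Acal$ holds if and only if $F$ is surjective on objects.

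Next I would translate the remaining generators. Because $\Ar_m(X)$ consists only of the identity endomorphisms of its colors together with the single polyhom $X$ in arity $m$, a morphism $\Ar_m(X)\to P$ is freely given by a choice of objects $p_1,\dots,p_m,q\in P$ and a chain map $X\to P(p_1,\dots,p_m,q)$. Spelling this out, a map from $\Ar_m(S_{n-1})$ is a degree $(n-1)$ cycle $x$ in a polyhom complex of $P$, and a map from $\Ar_m(D_n)$ is an arbitrary degree $n$ element $y$ of the target polyhom. Commutativity of a lifting square forces the target objects to be $F(p_1),\dots,F(p_m),F(q)$ and forces $F(x)=dy$, so the square is exactly the chain-level lifting problem
\[
\xymatrix{S_{n-1}\ar[d]\ar[r]^{x} & P(p_1,\dots,p_m,q)\ar[d]^{F}\\ D_n\ar[r]^{y}\ar@{-->}[ur] & Q(F(p_1),\dots,F(p_m),F(q))}
\]
for a single polyhom complex. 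Running over all $m$, $n$ and all tuples of objects of $P$, the right lifting property against the whole second family is therefore equivalent to: every polyhom component $F:P(p_1,\dots,p_m,q)\to Q(F(p_1),\dots,F(p_m),F(q))$ has the right lifting property against $\{S_{n-1}\to D_n\}_n$.

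It then remains to invoke the standard characterization of trivial fibrations in the projective model structure on complexes of $k$-vector spaces: a chain map $g:C\to D$ has the right lifting property against $\{S(n-1)\to D(n)\}_n$ if and only if $g$ is degreewise surjective and a quasi-isomorphism. For the reader's convenience I would record the short argument. In the forward direction, taking $x=0$ shows that every cycle of $D$ lifts to a cycle of $C$; lifting $dy$ first then upgrades this to degreewise surjectivity, and the same lifting directly yields surjectivity and injectivity on homology. Conversely, given such a square one first lifts $y$ to some $z_0\in C_n$, observes that $x-dz_0$ is a cycle in the acyclic complex $\ker g$ (acyclic because $g$ is a surjective quasi-isomorphism), writes $x-dz_0=dw$ with $w\in\ker g$, and sets $z=z_0+w$, which solves the problem. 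Applying this to every polyhom component of $F$ shows that the right lifting property against the second family is equivalent to $F$ being surjective on polyhoms and a quasi-isomorphism on polyhoms; combined with the object computation this gives $I\text{-inj}=\textbf{Surj}$.

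The hard part here is not analytic but a matter of careful bookkeeping: one must verify that a morphism out of $\Ar_m(X)$ is genuinely unconstrained by the operad axioms, so that the lifting problem splits into independent single-polyhom chain problems with prescribed target objects. Once that reduction is in place, the statement follows from the classical chain-complex lemma above, exactly parallel to the analogous step in Tabuada's treatment of DG-categories.
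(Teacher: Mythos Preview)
Your proof is correct and follows essentially the same route as the paper: you reduce the lifting property against $\emptyset\to\Acal$ to surjectivity on objects, reduce the lifting property against $\Ar_m(S_{n-1})\to\Ar_m(D_n)$ to the chain-level lifting problem for a single polyhom complex, and then invoke the standard characterization of trivial fibrations of complexes (the paper cites this as \cite[Proposition~2.3.5]{Ho} and reproduces the same argument you sketch). Your explicit discussion of why a map out of $\Ar_m(X)$ is determined freely by a tuple of objects and a chain map is in fact more careful than the paper, which simply passes to the chain-level square without comment.
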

\begin{proof}
Let $F:P \to Q$ be a morphism of operads fitting into a commutative diagram
\[ \xymatrix{\emptyset \ar[d] \ar[r] & P \ar[d]^F \\ \Acal \ar[r]^\alpha & Q}\]
A map $\alpha : \Acal \to P$ is equivalent to a choice of an object in $Q$. Hence, having the right lifting property with respect to $\emptyset \to \Acal$ is equivalent to $F$ being surjective on objects.
 
The rest of the proof is \cite[Proposition 2.3.5]{Ho}. We include it here for the readers convenience. Suppose that we have the right lifting property with respect to $S_{n-1} \to D_n$. A morphism of complexes
\[ \xymatrix{k\ar[d]^{\beta^ {n}} \ar[r]^\id & k \ar[d] \\ M^{n} \ar[r] & M^{n-1}}\]
is equivalent to a choice of an element in $M^{n}$. Likewise, a morphism $S_{n-1} \to M$ is given by an element $m$ in $M^{n-1}$ with $dm=0$. Thus, a diagram
\[ \xymatrix{S_{n-1}\ar[d] \ar[r]^f & X \ar[d]^p \\ D_n \ar[r]^g & Y}\]
is equivalent to the data $\{(y,x) \in Y_n \times Z_{n-1}X \mid px=dy\}$. A lift is equivalent to a $z \in X_n$ such that $pz=y$ and $dz=x$. Let $y \in Y_n$ with $dy=0$. Then $(y,0)$ defines a diagram so by the lifting property there exists $z \in X_n$ with $pz=y$ and $dz=0$. Hence, $p$ is surjective on homology. Let $x \in Z_nX$ with $px=dy$. Then $(y,x)$ defines a diagram so there  exists $z \in X_{n+1}$ with $pz=y$ and $dz=x$. Hence, $p$ is also injective on homology so it is a quasi-isomorphism. The morphism $p$ is also surjective. Indeed, let $y \in Y_n$. Then $dy$ is a cycle so there exists $x \in Z_{n-1}X$ such that $px=dy$. The pair $(y,x)$ defines a diagram so there exists a lift $z \in X_n$ with $pz=y$. 

Assume now that $p$ is surjective on polyhoms and quasi-isomorphism on polyhom. Given a diagram $(y,x)$ we need to define a lift $z \in X_n$ with $pz=y$ and $dz=x$. Since $p$ is surjective on polyhoms we have a short exact sequence
\[0 \to K \to X \to Y \to 0\]
Choose $w \in X_n$ with $pw=y$. Then $p(dw) = d(pw) = dy = px$ and $d(dw-x)=dx=0$ so $dw-x \in Z_{n-1}K$. Since $p$ is a quasi-isomorphism on polyhoms  $H_\bullet(K)=0$ so there exists $v \in K_n$ such that $dv=dw-x$. Set $z=w-v$. Then $pz=y$ and $dz=x$ as required.
\end{proof}

\begin{lemma}\label{JinjWSurj}
$J$-inj $\cap W =$\bf{Surj}.
\end{lemma}
\begin{proof}
As observed in lemma \ref{FibrationRLP} the right lifting property imply surjectivity on polyhoms. Since $\Acal \to \Hcal$ has no polyhoms a diagram
\[ \xymatrix{\Acal \ar[d] \ar[r]^\alpha & P \ar[d]^F \\ \Hcal \ar[r]^\beta & Q}\]
is equivalent to a diagram on the underlying categories
\[ \xymatrix{\Acal \ar[d] \ar[r]^\alpha & P(1) \ar[d]^{F(1)} \\ \Hcal \ar[r]^\beta & Q(1)}\]
The result now follows from \cite[Lemme 2.4]{Ta}.
\end{proof}

\begin{proof}[Proof of theorem \ref{ModelCatOperad}]
Condition (1) is clear. The lemmas \ref{Jcell}, \ref{IinjSurj} and \ref{JinjWSurj} proves conditions (4), (5) and (6) in theorem \ref{Ho}. We need to check that $J-\text{cell}\subseteq I-\text{cof}=\text{llp}(I-\text{inj)}$. Let $X \to Y$ be one of the morphisms in $J$ and $F:P \to Q$ a pushout. We need to define a lifting for any diagram with $G \in I$-inj.
\[ \xymatrix{X \ar[d] \ar[r] & P \ar[d]^F \ar[r] & A \ar[d]^G \\ Y \ar[r] & Q \ar[r] & B}\]
By the lemmas $I-\text{inj}=J-\text{inj} \cap W$ so there is a morphism $Y\to A$ making the diagram commutative. The desired morphism $Q \to A$ is obtained from the universal property of pushout.

For (2) and (3) let $\kappa$ be a cardinal, $\lambda$ a $\kappa$ filtered ordinal and
\[ X_0 \to X_1 \to \dots \to X_\beta \to \dots \]
a $\lambda$-sequence. Then
\begin{gather} 
\colim_{\beta < \lambda} \Hom(\emptyset, X_\beta)= \colim_{\beta < \lambda}  0=0=\Hom(\emptyset, \colim_{\beta < \lambda} X_\beta),\\
\colim_{\beta < \lambda} \Hom(\Ar_m(S_{n}), X_\beta)= \colim_{\beta < \lambda}  \{x_\beta \in X_\beta^{n} : dx_\beta=0\},\\
\Hom(\Ar_m(S_n), \colim_{\beta < \lambda} X_\beta)=\{x\in \colim_{\beta < \lambda} X_\beta^n : dx=0\}.
\end{gather}
Since morphisms are chain maps the two last colimits agree. In particular, the domains are small relative to $I$-cell. For $J$ we have
\begin{gather} 
\colim_{\beta < \lambda} \Hom(\Acal, X_\beta)= \colim_{\beta < \lambda}  \{ \text{Obj}(X_\beta) \}=\Hom(\Acal, \colim_{\beta < \lambda} X_\beta),\\
\colim_{\beta < \lambda} \Hom(\Ar_m(0), X_\beta)= \colim_{\beta < \lambda}  =0=\Hom(\Ar_m(0), \colim_{\beta < \lambda} X_\beta)\}.
\end{gather}
Thus, the domains are small relative to any set of morphisms.
\end{proof}

\section{Unital dg-operads}
In the following sections we define a model category of categories enriched over DG-Cat, and of strong homotopy functors between them. Since the enrichments have units given by identity functors we first define the subcategory of unital colored non-symmetric DG-operads and prove that it inherits a model category structure. Let $\E$ be a set of objects with a distinguished unit object $e$. Write $\Acal(0)=\oplus_t \Acal(e,t)$. Then we have an inclusion  $\Acal(0):=\oplus_{t \in \Ebold} \Acal(e,t) \hookrightarrow \oplus_{s,t \in \Ebold} \Acal(s,t)=\Acal(1)$.

\begin{defin}
A unital DG-operad $\Acal$ is a DG-operad satisfying that for any $n \in \N$ and tuple $m_1, \dots, m_n$ with $0 \leq m_i \leq 1$ the morphism of complexes
\[ A(n) \otimes_{A(1)^n} (A(m_1) \otimes \cdots \otimes A(m_n)) \to A(m_1 + \dots + m_n) \]
is an isomorphism.
\end{defin}

Writing the definition out in terms of colors we get
\[ A(s_1, \dots, s_n, t) \otimes (A(r_1, s_1) \otimes \cdots \otimes A(e,s_i) \otimes \cdots \otimes A(r_n,s_n)) \isomap A(r_1, \dots, \hat{e}, \dots, r_n,t) \]
Given $(s_1, \dots, s_n) \in \E^n$ we define the reduced expression $\text{red}(s, \dots, s_n)$ to be $(s_1, \dots, s_n)$ with all occurrences of $e$ removed; e.g. red$(s,e,t,r,e,e,v,u,e)=(s,t,r,v,u)$. Instead of $\red(s_1, \dots, s_n)=\emptyset$ we write $\red(s_1, \dots, s_n) =e$. In particular, for unitary operads $\Acal(s_1, \dots, s_n,t) \isomap \Acal(\text{red}(s_1, \dots, s_n),t).$

\begin{defin}
A morphism of unital colored operads $F : \Acal \to \Bcal$ is a morphism of operads $F: \Acal \to \Bcal$ satisfying that $F(e_\Acal)=e_\Bcal$. Denote the category of unital colored operads by DG-Oper$^u(k)$.
\end{defin}

The forgetful functor
\[ \text{For} : \text{DG-Oper}^u(k) \to \text{DG-Oper}(k) \]
has an adjoint given by
\[ \tilde{} : DG\text{-Oper}_{\Ebold} \to DG\text{-Oper}^{u}_{\Ebold \cup \{e\}} \]
with
\[ \tilde{\Acal}(s_1, \dots, s_n,t)=\begin{cases} \Acal(\red(s_1, \dots, s_n), t) & \text{if } t\neq e \text{ and } \red(s_1, \dots, s_n) \neq e \\ 0 & \text{if } t = e \text{ and } \red(s_1, \dots, s_n) \neq e \\
k &  \text{if } t = e \text{ and } \red(s_1, \dots, s_n) = e \\
0 & \text{if }  t \neq e \text{ and } \red(s_1, \dots, s_n) = e \end{cases} \]

\begin{lemma} \label{MapUnitalForget}
$\Map_u(\tilde{\Acal}, \Bcal)=\Map(\Acal, \For(\Bcal))$
\end{lemma}
\begin{proof}
Let $F \in \Map(\tilde{\Acal}, \Bcal)$. It is given by the data $F : \Ebold_\Acal \cup \{e_\Acal\} \to \Ebold_\Bcal$ and
\[ \tilde{\Acal}(s_1, \dots, s_n, t) \to \Bcal(F(s_1), \dots, F(s_n), F(t)) \]
for all $s_1, \dots, s_n, t \in \Ebold_\Acal$. Since $F(e_\Acal)=e_\Bcal$ it is given by $F |_{\Ebold_\Acal} : \Ebold_\Acal \to \Ebold_\Bcal$ and
\begin{align}
\Bcal(F(s_1), \dots, F(s_n), F(t)) & \iso \Bcal(\red(F(s_1), \dots, F(s_n)), F(t))\\
&=\Bcal(\red(F(\red(s_1, \dots, s_n))), F(t))\\
& \iso \Bcal(F(\red(s_1, \dots, s_n)), F(t)).
\end{align}
Hence, for $t\neq e$ and $\red(s_1, \dots, s_n) \neq e$ it is determined by $F|_\Acal \in \Map(\Acal, \For(\Bcal))$. In the cases $t=e, \red(s_1, \dots, s_n) \neq e$ and $  t \neq e, \red(s_1, \dots, s_n) = e$ the map is 0. In the only remaining case $t=e, \red(s_1, \dots, s_n) = e$ we have
\[ \tilde{\Acal}(e,\dots, e, e)=\tilde{\Acal}(e,e)=k \to \Bcal(e,e) \]
Since $F$ is in particular a functor between the categories $\tilde{\Acal}(1)$ and $\Bcal(1)$ it takes 1 to $\id \in \Bcal(e,e)$. Thus, we have shown that $F$ is completely determined by $F|_\Acal$. It is clear from the above that a non-unital map $\Acal \to \For(\Bcal)$ uniquely extends to a unital map $\tilde{\Acal} \to \Bcal$.
\end{proof}

In this category we define the generating fibrations and generating trivial fibrations to be $\tilde{\;}$ of the generators in the non-unital category.

\begin{defin}
\begin{enumerate}
\item $\tilde{I}$ is the set of maps $\{\tilde{\emptyset} \to \tilde{\Acal}, \tilde{\Ar}_m(S_{n-1})\to \tilde{\Ar}_m(D_n); m,n \in \N\}$.
\item $\tilde{J}$ is the set of maps $\{\tilde{\Acal} \to \tilde{\Hcal}, \tilde{\Ar}_m(0) \to \tilde{\Ar}_m(D_n) ; m,n \in \N \}$.
\end{enumerate}
\end{defin}

A map $F: X \to Y$ of non-unital operads upgrades to a map of unital operads $\tilde{F}: \tilde{X} \to \tilde{Y}$. By the lemma for a map of unital operads $G: P \to Q$ to have the right lifting property with respect to $\tilde{F}$ is equivalent to For$(G)$ to have the right lifting property with respect to $F$.

\[ \xymatrix{\tilde{X} \ar[d]_{\tilde{F}} \ar[r] & P \ar[d]^G \\ \tilde{Y} \ar[r] \ar@{-->}[ru] & Q}  \qquad \qquad \xymatrix{X \ar[d]_{F} \ar[r] & \text{For}(P) \ar[d]^{\text{For}(G)} \\ Y \ar[r] \ar@{-->}[ru] & \text{For}(Q)}\]

Hence, the unital weak equivalences, (resp. fibrations, resp. cofibrations are exactly those morphisms of unital operads which are weak equivalences (resp. fibrations, resp. cofibrations) in the non-unital model category.

\begin{thm}\label{ModStructUOper}
The category of unital DG-operads has a model structure.
\end{thm}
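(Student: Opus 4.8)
The plan is to transfer the model structure from the non-unital category DG-Oper$(k)$ along the adjunction $(\For, \tilde{\;})$, using the generating sets $\tilde{I}$ and $\tilde{J}$ just defined. The strategy mirrors the proof of Theorem \ref{ModelCatOperad}: I would again invoke Hovey's recognition theorem \ref{Ho}, but now the key simplification is that the correspondence established just before the statement of the theorem — namely that a map of unital operads $G$ has the right lifting property with respect to $\tilde{F}$ if and only if $\For(G)$ has the right lifting property with respect to $F$ — lets me pull back essentially all the verifications to the non-unital setting.

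First I would record the basic categorical facts: DG-Oper$^u(k)$ has all small limits and colimits (limits are computed as in the non-unital category together with the condition $F(e)=e$, and colimits are computed using the free-unital functor $\tilde{\;}$, exactly as in the non-unital case). Next I would define the unital weak equivalences to be the morphisms $F$ of unital operads with $\For(F)$ a weak equivalence in DG-Oper$(k)$; since $\For$ reflects the two-out-of-three property and retracts, condition (1) of \ref{Ho} is immediate. For conditions (2) and (3), the domains of $\tilde{I}$ and $\tilde{J}$ are of the form $\tilde{X}$ for $X$ a domain in $I$ or $J$; by the adjunction $\Map_u(\tilde{X}, -) = \Map(X, \For(-))$ of Lemma \ref{MapUnitalForget}, smallness of $\tilde{X}$ relative to $\tilde{I}$-cell (resp. $\tilde{J}$-cell) follows from smallness of $X$ relative to $I$-cell (resp. $J$-cell), which was proved in \ref{ModelCatOperad}, provided $\For$ commutes with the relevant transfinite colimits.

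The heart of the argument is conditions (4)--(6), and here I would exploit the lifting correspondence directly. Using that $G \in \tilde{J}$-inj iff $\For(G) \in J$-inj, and that by \ref{JinjWSurj} the latter together with weak equivalence is exactly \textbf{Surj}, one gets $\tilde{I}$-inj $= \tilde{J}$-inj $\cap W$ by transporting Lemmas \ref{IinjSurj} and \ref{JinjWSurj} across $\For$; this yields conditions (5) and (6). For condition (4), $\tilde{J}$-cell $\subseteq W \cap \tilde{I}$-cof, I would argue that the image under $\For$ of a pushout of $\tilde{J}$-maps is a transfinite composition of pushouts of $J$-maps — this is where one must be careful, since $\For$ need not preserve pushouts on the nose. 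The main obstacle is precisely this compatibility: I expect to need the explicit description of $\tilde{\;}$ (via the reduced-word formula defining $\tilde{\Acal}$) to check that forgetting a unital pushout of $\tilde{J}$ reproduces, up to the units, a non-unital pushout of $J$, so that Lemma \ref{Jcell} applies and gives $\tilde{J}$-cell $\subseteq W$; the inclusion into $\tilde{I}$-cof then follows formally from $\tilde{I}$-inj $\subseteq \tilde{J}$-inj by the standard lifting argument already used at the end of the proof of \ref{ModelCatOperad}. Once these are in place, Hovey's theorem produces the desired cofibrantly generated model structure, and fibrancy of all objects is inherited from the non-unital case via $\For$.
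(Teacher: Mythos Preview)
Your proposal is correct and follows essentially the same route as the paper: transfer the structure via the adjunction $\tilde{\;} \dashv \For$, use the lifting correspondence to reduce conditions (5) and (6) to Lemmas \ref{IinjSurj} and \ref{JinjWSurj}, and handle condition (4) by comparing unital pushouts of $\tilde{J}$-maps with the corresponding non-unital pushouts so that Lemma \ref{Jcell} applies. The only difference is emphasis: where you flag the compatibility of $\For$ with pushouts as the main obstacle to be checked via the reduced-word description of $\tilde{\;}$, the paper simply asserts the isomorphism $\tilde{Y} \sqcup^{\tilde{X}} P \iso Y \sqcup^{X} P$ and moves on; your caution here is well placed but leads to the same verification.
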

\begin{proof}
It follows directly from lemma \ref{IinjSurj} and \ref{JinjWSurj} together with the above remark that $\tilde{I}-\text{inj}=\tilde{W} \cap \tilde{J}-\text{inj}$. For the pushout we have $\tilde{Y} \times^{\tilde{X}} P\iso  Y \times^X P$.
Hence, it follows from lemma \ref{Jcell} that $\tilde{J}-cell \subseteq \tilde{W}$. These are the main steps in the proof. The rest is the same as in the proof of theorem \ref{ModelCatOperad}.
\end{proof}

\section{Bar construction for colored non-symmetric DG-operads}
In this section we define the bar construction for colored non-symmetric DG-operads. It is a modification of the bar construction for (non-colored) symmetric DG-operads in Getzler-Jones \cite{GJ} (which is again based on a bar construction by Ginzburg and Kapranov \cite{GK}) replace the abstract graph tree for symmetric operads with planar trees. Another good reference is \cite{LV}.

\subsection{Trees}
A (planar rooted) tree is a nonempty oriented, contractible planar graph without loops (oriented or not) such that there is a least one incoming edge and exactly one outgoing edge at each vertex. We allow edges to be bounded by a vertex at one end only. Such vertices are called external. All other edges are called internal. For a tree $T$ we denote the set of input edges by In$(T)$. For a vertex $v \in T$ we denote the number of incoming edges by In$(v)$. A tree with $n$ incoming edges is called an $n$-tree.

\subsection{The free DG-operad}
Let $P$ be an $\N$-collection. We define the $\N$-collection $F(P)$ 
\[ F(P)(n)=\bigoplus_{n\text{-tree } T} \bigotimes_{v \in T} P(\text{In}(v)). \]
Observe that there is a map $F(P) \circ F(P) \to F(P)$ (modifying the trees) giving a natural operad structure. This is called the free operad. Similarly, one also have a map $F(P) \to F(P) \circ F(P)$ giving a cooperad structure on $F(P)$. This is called the cofree cooperad and we denote it by $C(P)$. The free operad is free in the sense that the functor $F : \N\text{-coll} \to \text{DG-Oper}$ is right adjoint to the forgetful functor (see e.g. \cite[Cor. 1.11]{GJ}).

\subsection{Bar construction}
Let $P$ be a DG-operad. We write $\Sigma P$ for its suspension which shifts the degree of each complex by minus 1, i.e. $(\Sigma P)(n)^i= P(n)^{i-1}$ and $\delta_{\Sigma P(n)}(v)=(-1)^{|v|}\Sigma (\delta_{P(n)} v)$. The bar construction $\mathcal{B}(P)$ is the defined by adding a term to the differential on the cofree cooperad $C(P)$ given by the operad structure on $P$.

Let $T$ be a tree and $e$ be an internal edge from vertex $s$ to $t$ with $e$ being input number $k$. We define the tree $T\backslash e$ by contracting the edge $e$ and merging the vertices $s$ and $t$. The merged vertex in $T\backslash e$ is denoted by $t \cdot s$. Using the unit map $\eta : 1 \to P(1)$ and the operad structure we get a map
\[ P(\text{In}(t)) \otimes 1^{k} \otimes P(\text{In}(s)) \otimes 1^{\text{In}(t)\backslash e-k} \to P(\text{In}(t)) \otimes P(1)^{k} \otimes P(\text{In}(s)) \otimes P(1)^{\text{In}(t)\backslash e-k} \to P(\text{In}(t \cdot s))\]
This induces a map $\partial_e : (\Sigma P)(T) \to (\Sigma P)(T\backslash e)$ of degree -1. Let $\partial : C(\Sigma P) \to C(\Sigma P)$ be the sum of all the $\partial_e$ for all trees $T$ and internal edges $e$. As shown in \cite[Lemma 3.2.9]{GK} for any finite set $I$ and $V_i$ DG-vector spaces for $i \in I$ there is an isomorphism.
\begin{gather}
    \bigotimes_{i \in I} V_i[-1] \isomap (\bigotimes_{i \in I} V_i)[-|I|] \otimes \Lambda^{|I|} k^{|I|},\\
    v_{i_1} \otimes \dots \otimes v_{i_n} \mapsto v_{i_1} \otimes \dots \otimes v_{i_n} \otimes i_1 \wedge \dots \wedge i_n.
\end{gather}
Hence, if $e_1$ and $e_2$ are two distinct edges in $T$ then $\partial_{e_1} \partial_{e_2}+\partial_{e_2} \partial_{e_1}=0$ and so $\partial^2=0$. Since $\partial$ commutes with $\delta_P$ it follows that $\delta_P+\partial$ is a differential. In \cite[Prop. 2.2]{GJ} they also check that it is compatible with the operad structure.

\begin{defin}
Let $P$ be an augmented operad. The bar cooperad $\Bcal(P)$ is the cooperad $C(\Sigma P)$ with differential $\delta_{\Bcal(P)}=\delta_P + \partial$.
\end{defin}

Notice that bar for $P(1)$ is the same as bar for DG-categories (see Keller). A cooperad is connected if it has $k_\E$ in degree 0 and the rest is concentrated in strictly positive degree e.g. $\Bcal(P)$ is connected for any operad $P$. 
There is a cobar construction $\Bcal^*$ for connected cooperads. It is the operad $F(\Sigma^{-1}Q)$ with differential $\delta_{\Bcal^* Q}=\delta_Q + \partial^*$, where $\delta_Q$ is the internal differential induced by the one on $Q$ and $\partial^*$ is the differential defined by reversing all the arrows in the definition of $\partial$.

\begin{lemma}\cite [Thm. 2.17]{GJ}
The functors $\Bcal^*$ and $\Bcal$ forms an adjoint pair of functors between DG-operad and connected DG-cooperads.
\[ \Hom_{\text{DG-Oper}}(\Bcal^*(Q),P)=\Hom_{\text{DG-Coop}}(Q, \Bcal(P)). \]
\end{lemma}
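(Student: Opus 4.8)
The plan is to identify both sides of the claimed equality with a common third set: the collection of \emph{twisting morphisms} (Maurer--Cartan elements) in the convolution algebra $\Hom_{\N\text{-coll}}(Q,P)$. Since an operad is an associative algebra and a cooperad is a coassociative coalgebra in $(\N\text{-coll}_\E, \odot)$, the graded complex $\Hom_{\N\text{-coll}}(Q,P)$ carries an associative convolution product $\alpha \star \beta := \mu_P \circ (\alpha \odot \beta) \circ \Delta_Q$, where $\Delta_Q$ is the cocomposition of $Q$ and $\mu_P$ the composition of $P$; together with the differential $\partial$ induced by $\delta_Q$ and $\delta_P$ this makes it a DG-associative algebra. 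A twisting morphism is a degree $-1$ element $\alpha$ satisfying $\partial(\alpha) + \alpha \star \alpha = 0$.

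First I would set up the two underlying graded bijections. Because $\Bcal^*(Q) = F(\Sigma^{-1} Q)$ is the free operad on the collection $\Sigma^{-1}Q$, a morphism of graded operads $\Bcal^*(Q) \to P$ is the same as a morphism of collections $\Sigma^{-1}Q \to P$; dually, because $\Bcal(P) = C(\Sigma P)$ is the cofree cooperad on $\Sigma P$, a morphism of graded cooperads $Q \to \Bcal(P)$ is the same as a morphism of collections $Q \to \Sigma P$. Both data unwind, after the degree shift $(\Sigma V)^i = V^{i-1}$, to the \emph{same} degree $-1$ collection map $\alpha : Q \to P$, so at the level of graded (co)operad morphisms the two sides are already in canonical bijection.

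Next I would translate the differential-compatibility condition on each side into the Maurer--Cartan equation. An operad map out of the free operad, and a cooperad map into the cofree cooperad, are each determined by their (co)restriction to (co)generators; and since the relevant differentials are (co)derivations and the extensions are (co)algebra maps, checking that such an extension is a chain map reduces to checking it on (co)generators. On the cobar side the generator component of $\delta_Q + \partial^*$ splits into the internal part, which yields $\partial(\alpha)$, and the part $\partial^*$ dual to the cocomposition of $Q$, which produces a two-vertex element whose image under $\mu_P$ is exactly $\alpha \star \alpha$; compatibility is then precisely $\partial(\alpha)+\alpha\star\alpha=0$. Dually, for a cooperad map $Q \to C(\Sigma P)$ the differential $\delta_P + \partial$ on $\Bcal(P)$ contributes $\partial(\alpha)$ from $\delta_P$ and $\alpha \star \alpha$ from the bar differential $\partial$ built out of the composition of $P$, giving the identical equation. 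Hence each Hom-set is in natural bijection with the set of twisting morphisms, the two bijections are mutually inverse, and naturality in $Q$ and $P$ is immediate from the functoriality of $\star$.

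The main obstacle is the sign and suspension bookkeeping: one must verify that the quadratic term produced by $\partial^*$ on the cobar side and the one produced by $\partial$ on the bar side yield the \emph{same} convolution square $\alpha \star \alpha$, with matching signs. This requires careful use of the Koszul-sign isomorphism $\bigotimes_{i} V_i[-1] \isomap (\bigotimes_i V_i)[-|I|]\otimes \Lambda^{|I|}k^{|I|}$ recalled above, exactly as in the verification that $\partial^2 = 0$; the conceptual content, by contrast, is the standard Bar--Cobar adjunction transported to the colored non-symmetric planar setting.
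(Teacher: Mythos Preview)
Your proposal is correct and follows essentially the same approach as the paper: the paper does not spell out a proof of this particular lemma (it defers to \cite{GJ}), but the proof it later gives for the analogous adjunction in $2\text{-Cat}_I$ is precisely the twisting-cochain argument you describe---both sides are identified with maps $\bar f:\Sigma A\to C$ of collections satisfying the Maurer--Cartan equation $f\circ d_A - d_C\circ \bar f + \Delta(f\odot f)\mu=0$. Your treatment is in fact somewhat more detailed, in that you make explicit the convolution DG-algebra structure and flag the sign/suspension bookkeeping as the one place requiring care.
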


The proof is for symmetric operads but the same proof works in our slightly modified setting.

\begin{prop}
For an operad $P$ we have a functorial cofibrant replacement given by $\Bcal^* \Bcal(P)$.
\end{prop}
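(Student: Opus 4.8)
The plan is to verify three things: that there is a natural map $\varepsilon_P : \Bcal^*\Bcal(P) \to P$, that it is a weak equivalence in the sense of Definition \ref{ModelStruct}, and that $\Bcal^*\Bcal(P)$ is cofibrant; together these say it is a functorial cofibrant replacement (all objects being fibrant, no fibrancy condition on $\varepsilon_P$ is needed). The map and its functoriality are formal consequences of the bar--cobar adjunction established just above: taking $Q=\Bcal(P)$ and transposing the identity of $\Bcal(P)$ across $\Hom_{\text{DG-Oper}}(\Bcal^*\Bcal(P),P)=\Hom_{\text{DG-Coop}}(\Bcal(P),\Bcal(P))$ produces the counit $\varepsilon_P$, and naturality of the counit in $P$ makes $P\mapsto(\Bcal^*\Bcal(P)\to P)$ functorial. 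Note also that neither $\Bcal$ nor $\Bcal^*$ alters the color set, so $\varepsilon_P$ is the identity on $\E$.

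For the weak equivalence I would first treat condition (1)(a) of Definition \ref{ModelStruct}, the quasi-isomorphism on each polyhom complex, which is the heart of the matter. The complex $\Bcal^*\Bcal(P)(s_1,\dots,s_n,t)$ carries the internal differential $\delta_P$ together with the combinatorial differential $\partial$ of $\Bcal$ (contracting internal edges) and $\partial^*$ of the cobar construction (expanding vertices). I would introduce the filtration by the number of vertices of the outer cobar trees; on the associated graded $\varepsilon_P$ becomes a map whose source carries only the combinatorial differential, and this is acyclic away from $P$ via the standard contracting homotopy built from the operad unit, as in \cite[Thm.~2.17]{GJ} and \cite{GK}. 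A spectral-sequence comparison then yields that $\varepsilon_P$ is a quasi-isomorphism on each polyhom. The whole argument transfers from the symmetric case essentially verbatim, since the only combinatorial inputs---the Koszul sign isomorphism \cite[Lemma 3.2.9]{GK} and the relations $\partial^2=(\partial^*)^2=0$---have already been verified above for planar trees. Once (1)(a) holds, condition (1)(b) is automatic: $\varepsilon_P$ is the identity on colors and a quasi-isomorphism on each hom-complex of the arity-one DG-category, so $H^0(\varepsilon_P(1))$ is a bijection on objects and an isomorphism on all $H^0$ hom-spaces, hence an equivalence.

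It remains to show $\Bcal^*\Bcal(P)$ is cofibrant, i.e. that it has the left lifting property against trivial fibrations, which by Lemma \ref{IinjSurj} are exactly the maps in \textbf{Surj}. Here I would exploit the connected (weight) grading on $\Bcal(P)=C(\Sigma P)$: forgetting the differential, $\Bcal^*\Bcal(P)$ is the free operad on $\Sigma^{-1}C(\Sigma P)$, and because the reduced cocomposition of a connected cooperad strictly lowers weight, the cobar differential of a weight-$w$ cogenerator lands in the suboperad generated by cogenerators of weight $<w$ (the internal differential preserving weight). This filtered-triangular structure lets me build $\Bcal^*\Bcal(P)$ as a transfinite composition of pushouts of the generating cofibrations: the colors are attached by maps $\emptyset\to\Acal$, and the cogenerators, weight by weight, are attached together with their internal differentials by pushouts of $\Ar_m(S_{n-1})\to\Ar_m(D_n)$, so it is a cell complex and in particular cofibrant. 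Equivalently, given a trivial fibration $G:A\to B$ and a map $\Bcal^*\Bcal(P)\to B$, one constructs a lift generator-by-generator up the weight filtration: surjectivity on polyhoms lifts the image of a cogenerator and acyclicity of $\ker G$ corrects it into a chain map---this is exactly the computation in the second half of the proof of Lemma \ref{IinjSurj}.

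The step I expect to be the main obstacle is the polyhom acyclicity in (1)(a): constructing the contracting homotopy on the associated graded of the cobar--bar complex and keeping track of the signs and the colored planar ``tree of trees'' combinatorics so that the spectral sequence collapses onto $P$ in the expected degree. By contrast, the construction of $\varepsilon_P$ and the cofibrancy argument are comparatively formal, relying only on the adjunction and on the lifting technology already developed for \textbf{Surj}.
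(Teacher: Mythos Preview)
Your proposal is correct and follows essentially the same two-step structure as the paper, which splits the argument into Lemma~\ref{CobBarRes} (the counit is a weak equivalence via a filtration and spectral-sequence argument) and Lemma~\ref{CobBarCofib} (cofibrancy by exhibiting $\Bcal^*\Bcal(P)$ as quasi-free, i.e.\ an $I$-cell complex built from the generating cofibrations), exactly as you outline.

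One technical point worth flagging: the paper filters by the size $|T|$ of the \emph{total} tree (the tree obtained after inserting the bar-trees into the vertices of the cobar-tree), not by the number of vertices $|T'|$ of the outer cobar tree as you propose. With the paper's filtration the bar differential $d_2$ vanishes on the associated graded, and for each fixed $T$ one is left with $P(T)$ tensored with a purely combinatorial complex indexed by the $T'\le T$, which the paper identifies explicitly with the chain complex of a simplex on the internal edges of $T$ and hence is acyclic above degree~$0$. Your filtration by $|T'|$ would instead kill the cobar differential and leave the internal-plus-bar differential on the associated graded, i.e.\ a tensor product of $\Bcal(P)$-pieces, which is not directly acyclic; the ``contracting homotopy built from the operad unit'' you invoke is closer to an alternative argument than to what the paper actually does. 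Both routes can be made to work, but be aware that the filtration you named does not produce the associated graded you describe.
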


We split the proof of the proposition into two lemmas.

\begin{lemma} \label{CobBarRes}
The counit $\Bcal^* \Bcal (P) \to P$ is a weak equivalence for any operad $P$.
\end{lemma}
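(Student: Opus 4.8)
The plan is to prove that the counit map $\varepsilon : \Bcal^*\Bcal(P) \to P$ is a weak equivalence, which by Definition \ref{ModelStruct} means verifying two things: that $\varepsilon$ is a quasi-isomorphism on all polyhoms $\Bcal^*\Bcal(P)(s_1,\dots,s_n,t) \to P(s_1,\dots,s_n,t)$, and that $H^0(\varepsilon(1))$ is an equivalence of the underlying homotopy categories. The main work is the first point; the second should follow formally from the first together with the observation that the counit is bijective on objects (the bar and cobar constructions do not alter the color set $\E$, so $\varepsilon$ is the identity on $\E$, and a quasi-isomorphism on $\Bcal^*\Bcal(P)(1) \to P(1)$ that is the identity on objects induces an equivalence on $H^0$ of the underlying DG-categories).

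\emph{First} I would recall the explicit description of $\Bcal^*\Bcal(P)$. By construction $\Bcal(P) = C(\Sigma P)$ as a cofree cooperad with differential $\delta_P + \partial$, and $\Bcal^*(\Bcal(P)) = F(\Sigma^{-1}\Bcal(P))$ as a free operad with differential $\delta_{\Bcal(P)} + \partial^*$. Unwinding both levels, the underlying graded object is a direct sum, indexed by planar trees whose vertices are themselves decorated by planar trees, of tensor products of the spaces $P(\text{In}(v))$, with degree shifts coming from the two suspensions. The total differential has three pieces: the internal differential $\delta_P$, the edge-contraction differential $\partial$ coming from the cooperad structure (the bar part), and its dual $\partial^*$ coming from refining a vertex into a tree (the cobar part). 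The plan is to exhibit a contracting homotopy reducing the two-tree data to the one-tree (i.e.\ original operad) data.

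\emph{The standard approach}, which I would follow, is to set up a filtration or an explicit homotopy witnessing that the ``bar-cobar'' part of the complex is acyclic away from the copy of $P$ sitting inside it as corollas (one-vertex trees). Concretely, on each polyhom $P(s_1,\dots,s_n,t)$ is a summand of $\Bcal^*\Bcal(P)(s_1,\dots,s_n,t)$ via the corolla-indexed tree, and the counit is the projection onto this summand. The key computation is that the subcomplex complementary to $P$ is contractible. The cleanest way is to produce a homotopy $h$ with $\partial^* h + h \partial^* = \id - \pi$ where $\pi$ projects onto the corolla part, using the combinatorics of splitting off a single edge; this is exactly the mechanism by which, for ordinary (co)operads, $\Bcal^*\Bcal$ is the standard cofibrant (minimal-model-adjacent) resolution. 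Since the spaces $P(\text{In}(v))$ are mere complexes of $k$-vector spaces with no symmetric group action to track, the planar-tree bookkeeping is entirely combinatorial and the homotopy is constructed tree-by-tree; I would reference the analogous argument in \cite{GJ} and \cite{LV}, noting that dropping symmetry only simplifies matters.

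\emph{The main obstacle} I anticipate is not the existence of the contracting homotopy but controlling convergence and signs so that $\partial^* h + h\partial^*$ genuinely telescopes to the identity minus the projection. Because the tree sums are infinite, one must check that the homotopy is well-defined degreewise and that the acyclicity argument respects the grading so that it computes homology correctly on each finite-degree piece; here the fact that $\Bcal(P)$ is connected (it has $k_\E$ in degree $0$ and everything else in strictly positive degree, as noted just before the statement) is what guarantees the relevant filtration is exhaustive and bounded below in each polyhom degree. Once acyclicity of the complementary subcomplex is established, $\varepsilon$ is a quasi-isomorphism on every polyhom, and the second condition of Definition \ref{ModelStruct} follows since $\varepsilon$ is an isomorphism on the color set and hence induces an equivalence $H^0(\Bcal^*\Bcal(P)(1)) \isomap H^0(P(1))$.
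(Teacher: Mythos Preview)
Your overall strategy is sound and leads to the same conclusion, but it differs from the paper's argument in its mechanism, and your write-up leaves the key step implicit.

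The paper (following \cite[Theorem 3.2.16]{GK}) does not construct an explicit contracting homotopy. Instead it rewrites $\Bcal^*\Bcal(P)(n)$ as a sum over pairs $T\geq T'$ of planar $n$-trees (where $T'$ is obtained from $T$ by contracting a subset of internal edges), with the summand for $T=T'=\text{corolla}$ being $P(n)$. It then filters by $|T|$, obtaining a spectral sequence whose $E_1$-page isolates the cobar-type differential $d_3$; for fixed $T$ the remaining complex is $P(T)$ tensored with a purely combinatorial complex indexed by subsets of the internal edges of $T$, which is identified with the augmented chain complex of a simplex and hence is acyclic. This is clean and entirely combinatorial once the rewriting is done.

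Your approach---an explicit homotopy $h$ satisfying $\partial^* h + h\partial^* = \id - \pi$---is the other standard route (as in \cite{LV}), and it works, but as written it conflates two steps. The total differential has three pieces ($\delta_P$, $\partial$, $\partial^*$), so a homotopy for $\partial^*$ alone does not directly give acyclicity of the full complex; you must first filter (exactly as you allude to in your final paragraph) so that only one combinatorial differential survives on the associated graded, and \emph{then} build $h$ there. In other words, your ``main obstacle'' paragraph is not a side issue but the actual proof: the filtration comes first, and the homotopy (or, equivalently, the simplex identification the paper uses) is applied on the associated graded. If you make that order explicit and either construct $h$ or invoke the simplex argument, your proof is complete and essentially equivalent to the paper's.
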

\begin{proof}
This proof is an adaption of the proof of \cite[Theorem 3.2.16]{GK}. We include the proof in our setup for completeness and because it provides a model for the proof of a generalization of this statement (lemma \ref{B*Bres2Cat}). We first calculate $\Bcal^* \Bcal(P)$.
\begin{align}
    \Bcal^* \Bcal(P)(n) &=\bigoplus_{\substack{n \text{ tree } \\ T}} \bigotimes_{v \in T} \Bcal(P)[-1](\text{In}(v))\\
    &\iso \bigoplus_{\substack{n \text{ tree } \\ T}} \bigl( \bigotimes_{v \in T} \Bcal(P)(\text{In}(v)) \bigr)[-|T|] \otimes \det(T)\\
    &= \bigoplus_{\substack{n \text{ tree } \\ T}} \bigotimes_{v \in T}\bigoplus_{\substack{\text{In}(v) \\ \text{ tree } S}} \bigotimes_{w \in S} P[1](\text{In}(w)) [-|T|] \otimes \det(T)\\
    &= \bigoplus_{\substack{n \text{ tree } \\ T}} \bigotimes_{v \in T}\bigoplus_{\substack{\text{In}(v) \\ \text{ tree } S}} \bigl( \bigotimes_{w \in S} P(\text{In}(w)) \bigr)[-|T|+|S|] \otimes \det(T) \otimes \det(S)^*.
\end{align}
Observe that the two sums and tensor products means that we start with a tree $T$ and then we replace each vertex with a tree with the same valence. Hence, we get a direct sum where the same tree occurs many times corresponding to different starting trees and different trees inserted into it. If a tree $T'$ can be obtained for $T$ by contracting some (possibly empty) set of edges (i.e. as a starting tree it will give rise to a copy of $T$) then we write $T\geq T'$.

For $w \in T'$ we denote the subtree of $T$ contracted into $w$ by $T_w$. Observe that in the above formula $\det(S)^*$ cancels out part of $\det(T)$. Rewriting in terms of pairs $T \geq T'$ what remains is the part corresponding to the trees $T_w$ and the formula becomes.
\[\Bcal^* \Bcal(P)(n) \iso \bigoplus_{\substack{n \text{ tree } T\\ T \geq T'}} \bigl( \bigotimes_{v \in T} P(\text{In}(v))\bigr)[-|T|+|T'|] \bigotimes_{w \in T'} \det(T_w)\]
Notice that the term for $T=T'$ being the tree with one vertex and $n$ edges is $P(n)$. Hence, we have a surjective projection map $\Bcal^* \Bcal(P) \to P$ which is a morphism of operads. We need to show that the complexes for all other pairs are acyclic. The differential on $\Bcal^* \Bcal(P)$ has three terms $d_1+d_2+d_3$, where $d_1$ is the differential on $P$, $d_2$ is the part induced by the operad structure on $P$, and $d_3$ the part induced by the operad structure on $F(P[1])$, i.e. grafting of trees. Note that $\Bcal^* \Bcal(P)$ is the total complex of a double complex placed in the third quadrant with differentials $d_1+d_2$ and $d_3$. Notice that we have a bounded below exhaustive filtration compatible with the differentials given by
\[\Bcal^* \Bcal(P)(n)^m \iso \bigoplus_{\substack{n \text{ tree } T\\ T \geq T', |T|=m}} P(T)[-|T|+|T'|] \bigotimes_{w \in T'} \det(T_w).\]
The convergence of the spectral sequence of a bicomplex with a bounded below exhaustive filtration shows that it is enough to prove that the complex is acyclic with respect to $d_3$. Fix $n$ and $T$. Then the summand corresponding to $T$ is the product of $P(T)$ with the purely combinatorial complex with $-i$'th term 
\[C_i :=\bigoplus_{\substack{T \geq T'\\|T'|-|T|=i}} \bigotimes_{w \in T'} \det(T_w).\] 
Since $\det(T_w)$ is a one-dimensional vector space the $-i$'th term is $k^i$. A choice of $T'$ corresponds to a choice of edges in $T$ to contract and defines the $T_w$ which defines a way of splitting $T$ into up into $|T'|$ disjoint pieces. The differential on each $\det(T_w)$ is given by $i_1 \wedge \dots \wedge i_{|T_w|} \mapsto \sum_k i_1 \wedge \dots \wedge \widehat{i_k} \wedge \dots \wedge i_{|T_w|}$, where the hat indicates that the entry is left out. The differential corresponds to all possible ways of removing $|T'|$ edges keeping track of signs. This is exactly the face map in the chain complex of the $|T|$-simplex. Since a simplex is contractible the complex is acyclic except for in degree 0. Degree 0 is the part corresponding to $T'=T$ and so to $P(n)$. Hence, the map $\Bcal^* \Bcal(P) \to P$ is a quasi-isomorphism.
\end{proof}

\begin{lemma}\label{CobBarCofib}
The operad $\Bcal^* \Bcal(P)$ is cofibrant for any operad $P$.
\end{lemma}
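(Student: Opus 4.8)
The plan is to show directly that the unique map $\emptyset \to \Bcal^*\Bcal(P)$ is a relative $I$-cell complex, so that $\Bcal^*\Bcal(P)$ lies in $I\text{-cof}$ and is therefore cofibrant by Definition \ref{ModelStruct}. Recall from the computation in the proof of Lemma \ref{CobBarRes} that, as a graded operad (forgetting the differential), $\Bcal^*\Bcal(P) = F(W)$ is the free operad on the $\N$-collection $W := \Sigma^{-1}\Bcal(P) = \Sigma^{-1}C(\Sigma P)$, whose summands are indexed by decorated inner trees $S$; that is, $\Bcal^*\Bcal(P)$ is quasi-free. The content of the proof is to produce a filtration of $W$ along which the differential is triangular, so that $F(W)$ can be assembled by attaching cells $\Ar_m(S_{n-1}) \to \Ar_m(D_n)$.

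For a generator indexed by an inner tree $S$ set its weight to be $|S|$, the number of vertices of $S$, and let $W^{(\le r)} \subseteq W$ be the sub-$\N$-collection spanned by generators of weight $\le r$; this is an exhaustive filtration with $W^{(\le 0)} = 0$ and $\colim_r W^{(\le r)} = W$. The differential on $\Bcal^*\Bcal(P)$ is $d_1 + d_2 + d_3$ as in the proof of Lemma \ref{CobBarRes}. On a single generator $S$: the term $d_1$ is the internal differential of the $P$-decorations and preserves both $S$ and its weight; $d_2$ contracts an internal edge of $S$, producing generators indexed by $S\backslash e$ of weight $|S|-1$; and $d_3$ cuts $S$ along an edge into $S', S''$ with $|S'|, |S''| \ge 1$ and $|S'|+|S''| = |S|$, producing the operadic product of two generators of strictly smaller weight. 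Hence $d(W^{(\le r)}) \subseteq F(W^{(\le r)})$, so each $F(W^{(\le r)})$ is a sub-DG-operad, and modulo $F(W^{(\le r-1)})$ the induced differential on the weight-$r$ generators is exactly the internal differential $d_1$.

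I would then build $\emptyset \to \Bcal^*\Bcal(P)$ as the transfinite composition
\[ \emptyset \to F(W^{(\le 0)}) \to F(W^{(\le 1)}) \to \cdots \to F(W) = \Bcal^*\Bcal(P), \]
first adjoining the color set $\E$ by a (transfinite) composition of pushouts of $\emptyset \to \Acal$, and then passing from $F(W^{(\le r)})$ to $F(W^{(\le r+1)})$ by attaching the weight-$(r+1)$ generators. Since $k$ is a field, the weight-$(r+1)$ graded piece, viewed as a complex of $k$-vector spaces under $d_1$, splits as a direct sum of spheres $S_p$ and disks $k \xrightarrow{\id} k$ in degrees $p-1,p$. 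A sphere generator $x$ of degree $p$ has $d_1 x = 0$, so $dx = (d_2+d_3)x$ is a cycle in $F(W^{(\le r)})$ and $x$ is adjoined by a pushout of $\Ar_m(S_{p-1}) \to \Ar_m(D_p)$ along it, where $m$ and the colors are those of the indexing tree $S$. A disk contributes a pair $y,x$ with $d_1 x = y$; I adjoin $y$ first (its differential $(d_2+d_3)y$ is a cycle in $F(W^{(\le r)})$) and then $x$ (whose differential $y + (d_2+d_3)x$ is a cycle in the sub-operad already built), each again by a pushout of some $\Ar_m(S_{p-1}) \to \Ar_m(D_p)$. The colimit of these pushouts carries precisely the differential $d_1+d_2+d_3$, hence equals $\Bcal^*\Bcal(P)$, exhibiting $\emptyset \to \Bcal^*\Bcal(P)$ as a relative $I$-cell complex.

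The main obstacle is the second paragraph: verifying that the differential is genuinely triangular for the weight filtration, in particular pinning down the trees produced by $d_2$ and $d_3$ and checking that $d_3$ always lands in $F(W^{(\le r-1)})$ rather than merely in $F(W^{(\le r)})$. Once that triangularity is in hand, the cell-by-cell assembly of the third paragraph is routine, the only care being to order the two generators of each disk summand (source before target) so that every attaching map is a cycle in the portion of $\Bcal^*\Bcal(P)$ constructed so far; this is exactly where the field hypothesis on $k$ enters, via the splitting of complexes of vector spaces.
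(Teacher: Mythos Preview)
Your argument is correct and follows the same strategy as the paper: exhibit $\Bcal^*\Bcal(P)$ as quasi-free by filtering the space of generators $W=\Sigma^{-1}\Bcal(P)$ by the number of vertices of the inner tree, check that the differential is triangular for this filtration, and then attach the generators one at a time via pushouts of $\Ar_m(S_{n-1})\to\Ar_m(D_n)$. Your treatment is in fact a bit more careful than the paper's sketch: you separate out the internal differential $d_1$, which preserves the weight, and invoke the field hypothesis to split each weight-graded piece under $d_1$ into spheres and disks so that generators can be adjoined in an order making every attaching map a cycle; the paper simply asserts that the differential lowers the filtration degree and leaves this refinement implicit.
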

\begin{proof}
Let $V$ be an $\N$-collection. Then $F(V)$ is cofibrant. Indeed, the free operad functor is left adjoint to the forgetful functor from operads to $\N$-collections. Hence, the left lifting property for $\emptyset \to \text{Free}(V)$ with respect to trivial fibrations in the model category of operads is equivalent to the left lifting property of $V$ with respect to termwise surjective quasi-isomorphisms of $\N$-collections. This is equivalent to a collection of lifts of surjective quasi-isomorphisms in the model category of complexes of vector spaces and here such a lift exists.

Let $P$ be an operad with a filtration $P_0=F(V) \subset P_1 \subset \dots \subset P_n \subset \dots$ such that $P=\bigcup P_i$ and at each step $P_i=P_{i-1} \langle x_i \rangle$ with $d x_i \in P_{i-1}$. Such an operad is called quasi-free. In this case we have a pushout diagram
\[\xymatrix{\text{Free}(k \, d x_i)  \ar[r] \ar[d] & P_{i-1} \ar[d] \\ \text{Free}(k \, x_i \to k \, d x_i) \ar[r] & P_i} \]
Notice that $\text{Free}(k \, d x_i) \to \text{Free}(k \, x_i \to k \, d x_i)$ is the generating cofibration $\text{Ar}_m(S_{n-1}) \to \text{Ar}_m(D_n)$ for some $n,m$ so all the maps $P_{i-1} \to P_i$ are cofibrations and $P$ is cofibrant. 

The last step in the proof is to show that $\Bcal^* \Bcal(P)$ is quasi-free. Consider the filtration from the proof of the previous lemma
\[\Bcal^* \Bcal(P)(n)^m \iso \bigoplus_{\substack{n \text{ tree } T\\ |T|=m,  \; T \geq T'}} P(T)[-|T|+|T'|] \bigotimes_{w \in T'} \det(T_w).\]
The differential maps $\Bcal^* \Bcal(P)(n)^m$ into $\bigcup_{k <m}\Bcal^* \Bcal(P)(n)^k$ so adding the generators of $P(T)$ one at a time for $T$ with $|T|=m$ and then proceeding to $T'$ with $|T'|=m+1$ one gets a filtration of the required form. Thus, $\Bcal^* \Bcal(P)$ is cofibrant.
\end{proof}

\section{2-Cat$_I$}
In this section we introduce the category replacing 2-Cat from Schanuel-Street \cite{SS} in the (non-unital) homotopy setting. The unital version is done in the next section.

\subsection{Definition of 2-Cat$_I$}\label{Def2CatI}
To see how to generalize our definition we first rewrite the tree language of DG-operads into another combinatorial model. Notice that a tree with one inner vertex is equivalent to a rooted oriented polygon
\[\vcenter{\vbox{\xymatrix@R=1.5em{& & s_1 \\  \ar@{-}[r] & t \ar@{-}[ur] \ar@{-}[r] \ar@{-}[dr] & s_2 \\ & & s_3
}}} \qquad \leftrightarrow \qquad \vcenter{\vbox{\xymatrix{& \bullet \ar[r]^{s_2} & \bullet \ar[rd]^{s_3} & \\
\bullet \ar[ur]^{s_1} \ar[rrr]_t & & & \bullet}}}
\]
\[ \xymatrix{t \ar@{-}[r] &\bullet \ar@{-}[r] & s} \qquad \leftrightarrow \qquad \xymatrix{\bullet \ar@/^/[r]^s \ar@/_/[r]_t & \bullet} \]
This extends to a bijection between marked trees and rooted oriented colored polygons with cellular decomposition. These are called chord diagrams and we denote them by Diag.
\[ \vcenter{\vbox{\xymatrix@R=1em{& & & & s_1\\ & & t_2 \ar@{-}[r]& t_1 \ar@{-}[ur] \ar@{-}[r] & s_2 \\ \ar@{-}[r]& u \ar@{-}[ru] \ar@{-}[r] \ar@{-}[rd] & s_3 &\\ & & t_3 \ar@{-}[r] & s_4 & }}} \qquad \leftrightarrow \qquad \vcenter{\vbox{\xymatrix{& & \bullet \ar[rrd]^{s_3} & & \\
\bullet \ar[rru]^{s_2} & & & & \bullet \ar@/^/[dl]^{s_4} \ar@/_/[dl]_{t_3} \\
& \bullet \ar[ul]^{s_1} \ar@/^/[uur]^{t_1} \ar@/_/[uur]_{t_2} \ar[rr]_u & & \bullet &}}} \]
With this description we have a natural generalization by also labeling the vertices of the polygon by a finite set $I$. Let $\E=\{\E_{ij}\}_{i,j \in I}$ be a collection of sets. We now rewrite the above diagram picture into description similar to the one for DG-operads. One may think of elements of $I$ as 0-morphisms and elements of $\E$ as 1-morphisms. Taking source or target defines two maps $s,t: \E \to I$. Given a diagram with a cell decomposition we consider paths different paths in it, i.e. $n$-step ways to go from one point to another along the arrows in the diagram.
\[ \text{Path}_{\E,I}(n) :=\E \times_I \cdots \times_I \E \qquad \text{$n$ factors}.\]
The source and target maps extend to Path$_{\E,I}$. Such a path picks out a cell in the diagram. 
\[\text{Cell}_{\E,I}=\bigsqcup_n \text{Cell}_{\E,I}(n)=\bigsqcup_{n} \text{Path}_{\E,I}(n) \times_{I \times I} \E.\]
The set of all diagrams (with coloring) is denoted by Diag$_{\E,I}$. The set of all possible cellular decompositions (with coloring) of a cell is denoted by dec. We have a map Diag$_{\E,I} \to \text{Cell}_{\E,I}$ given by taking all the cells occuring in the diagram (but forgetting how they are glued together). We have the projection maps top$:\text{Cell}_{\E,I} \to \text{Path}_{\E,I}$ and bot$:\text{Cell}_{\E,I} \to \E$. For a rooted oriented polygon cell $\nabla$ the edges bot$(\nabla)$ is the one such that $\nabla$ is to the left, e.g. in the picture above bot$(t_1,t_2)=t_2$ and bot$(t_3,s_4)=t_3$. The other edges are denoted by top$(\nabla)$. Concatenation of paths gives a natural structure of an operad in sets
\begin{gather}
\text{Cell}^n \times_{\text{Path}^n} (\text{Cell}^{m_1} \times_I \dots \times_I \text{Cell}^{m_n})\to \text{Cell}^{m_1+ \dots + m_n},\\
(t_1, \dots, t_n;t), (s_1; t_1), \dots, (s_n;t_n) \mapsto (s_1, \cdots, s_n; t).
\end{gather}
This can be rewritten on the level of rings. Set
\begin{gather}
k_I:=\bigoplus_{r \in \E} k e_r,  \qquad k_\E:=\bigoplus_{i,j \in I} k_{\E_{i,j}}
 \in k_I\text{-mod-}k_I,\\
\Ocal({\text{Path}^n}) :=k_\E \otimes_{k_I} \dots \otimes_{k_I} k_\E.
\end{gather}
Set $\Ocal(\text{Cell}^n)=\Ocal(\text{Path}^n) \otimes_{k_I \times k_I} k_\E$. Pulling back along the concatenation map gives a morphism of rings
\begin{align} \label{OCell}
\Ocal(\text{Cell}^{m_1+ \dots + m_n}) \to  \Ocal(\text{Cell}^n) \otimes_{\Ocal(\text{Path}^n)} (\Ocal(\text{Cell}^{m_1}) \otimes_{k_I} \dots \otimes_{k_I} \Ocal(\text{Cell}^{m_n}))
\end{align}

\begin{defin}\label{NseqIE}
The category $\N \text{-seq}_{I, \E}$ has objects collections of complexes $\{A(n) \mid n \geq 1\}$ with 
\[A(n)= \quad \smashoperator{\bigsqcup_{\substack{i,j \in I, t \in \E_{ij} \\ s \in \text{Path}^n(i,j)}}} \;A_{i,j}(s;t) \in \Ocal(\text{Cell}^n)\text{-mod}.\]
 This category has a product $\odot$ given by
\[ (A \odot B)(m) := \quad \smashoperator{\bigoplus_{\substack{n \in \N, \\ m_1 + \dots m_n=m}}} \; B(n) \otimes_{\Ocal({\text{Path}^n})} (A(m_1) \boxtimes_{k_I} \dots \boxtimes_{k_I} A(m_n)), \]
where $A(m_1) \boxtimes_{k_I} \dots \boxtimes_{k_I} A(m_n) \in \Ocal({\text{Path}^{m_1 + \dots +m_n}})$-mod-$\Ocal({\text{Path}^n})$ with the two the left and right $k_I$ module structures coinciding.
\end{defin}

This can be written out in coordinates. Let $s_n, \dots, s_1 \in \text{Path}^n(i,j)$ with $s_k \in \E_{i_{k-1}, i_k}$ with $i_0=i$ and $i_n=j$.
\begin{align}
(A \odot B)_{i,j}(s_n, \dots, s_1;u)&=\quad \smash[b]{\smashoperator{\bigoplus_{\substack{m_1 + \dots +m_r=n\\t_k \in \E_{i_{k-1}, i_k}}}}} \quad B_{i,j}(t_r, \dots, t_1;u) \otimes(A_{i,i_1}(s_{m_1}, \dots, s_1;t_1) \boxtimes \dots \\
& \hspace{3cm} \boxtimes A_{i_{n-1},j}(s_n, \dots, s_{m_1 + \dots + m_{r-1}+1};t_r))
\end{align}

\begin{defin}\label{2CatI}
A category in 2-Cat$_I$ with 0-morphisms $I$ and 1-morphisms $\E$ is a unital associative algebra in $\N-\text{seq}_{I, \E}$ with
\[ A(n)  \otimes_{\Ocal({\text{Path}^n})} (A(m_1) \boxtimes_{k_I} \dots \boxtimes_{k_I} A(m_n)) \to A(m_1 + \dots +m_n) \]
a morphism of $\Ocal(\text{Cell}^{m_1 + \dots m_n})$-modules via \eqref{OCell}. 
\end{defin}

\begin{rem}
Notice that for $I$ being a one-point set 2-Cat$_{\{*\}}=$DG-oper.
\end{rem}

\begin{defin}
A morphism in 2-Cat$_I$ is the data $F=\{F_\E, F(n), n \geq 1\} : (I, \E_A, A) \to (I,\E_B, B)$. Here $F_\E : \E_A \to \E_B$ is a morphism. It induces a map of rings $\Ocal(\text{Cell}^n_A) \to \Ocal(\text{Cell}^n_B)$ and we require that this map is compatible with \eqref{OCell}. For $n \geq 1$ the $F(n)$ are morphisms of $\Ocal(\text{Cell}^n_A)$-modules $F(n) : A(n) \to B(n)$ satisfying composition.
\end{defin}

\subsection{Model category structure on 2-Cat$_I$}

Our goal in this section is to prove the following theorem 

\begin{thm} \label{2CatIModelStruct}
The category 2-Cat$_I$ has a cofibrantly generated model category structure with
\begin{enumerate}
\item A morphism $F: (I, \E_A, A) \to (I, \E_B, B)$ is a weak-equivalence if for all $n \in \N, i,j \in I$ and $s \in \operatorname{Cell}^n(i,j)$
	\begin{enumerate}
	\item The morphism $F : A_{ij}(s) \to B_{ij}(F(s))$ is a quasi-isomorphism.
	\item The functors $H^0(F(1)): H^0(A_{i,j}(1)) \to H^0(B_{i,j}(1))$ are equivalences of DG-categories.
	\end{enumerate}
\item A morphism $F: (I, \E_A, A) \to (I, \E_B, B)$ is a fibration if for all $n \in \N, i,j \in I$ and $s \in \operatorname{Cell}^n(i,j)$
	\begin{enumerate}
	\item The morphism $F : A_{ij}(s) \to B_{ij}(F(s))$ is termwise surjective.
	\item The functors $H^0(F(1)): H^0(A_{i,j}(1)) \to H^0(B_{i,j}(1))$ are isofibrations.
	\end{enumerate}
\end{enumerate}
\end{thm}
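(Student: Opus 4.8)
The plan is to mimic the proof of Theorem~\ref{ModelCatOperad} almost line by line, using the dictionary that $2$-Cat$_I$ differs from DG-Oper only in that the $k_\E^n$-module structure on polyhoms is replaced by the $\Ocal(\text{Cell}^n)$-module structure of Definition~\ref{NseqIE}, and that everything carries the extra base-point indexing by $i,j \in I$. As in the operadic case I would verify the hypotheses of the recognition theorem~\ref{Ho} for an explicit choice of generators. For a fixed pair $(i,j)$ let $\Acal_{ij}$ be the object freely adjoining one $1$-morphism to $\E_{ij}$ with endomorphism complex $k$; for a cell $s \in \text{Cell}^m(i,j)$ let $\Ar_s(X)$ place a complex $X$ on the polyhom $A_{ij}(s)$ and $k$ on the relevant identity cells; and let $\Hcal_{ij}$ carry two $1$-morphisms of $\E_{ij}$ together with a coherent isomorphism between them inside the underlying DG-category $A_{ij}(1)$, exactly as $\Hcal$ in the operadic case. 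I then set $I = \{\emptyset \to \Acal_{ij},\ \Ar_s(S_{n-1}) \to \Ar_s(D_n)\}$ and $J = \{\Acal_{ij} \to \Hcal_{ij},\ \Ar_s(0) \to \Ar_s(D_n)\}$, ranging over all $i,j \in I$, all $n \in \N$, and all cells $s$.

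A large part of the verification is purely local and transfers without change. The identification of fibrations with the maps having the right lifting property against $J$ (the analog of Lemma~\ref{FibrationRLP}) reduces, slot by slot, to two facts: surjectivity of $F \colon A_{ij}(s) \to B_{ij}(F(s))$ is detected by lifting against $\Ar_s(0) \to \Ar_s(D_n)$, and the isofibration condition on each $H^0(F(1)) \colon H^0(A_{ij}(1)) \to H^0(B_{ij}(1))$ is detected by lifting against $\Acal_{ij} \to \Hcal_{ij}$ via Lemma~\ref{HcalMap}, which concerns only the underlying DG-categories. The identities $I\text{-inj} = \textbf{Surj}$ and $J\text{-inj} \cap W = \textbf{Surj}$ (the analogs of Lemmas~\ref{IinjSurj} and~\ref{JinjWSurj}) follow the same way: the polyhom statements are checked one complex $A_{ij}(s)$ at a time exactly as in \cite[Prop.~2.3.5]{Ho}, and the statement about objects reduces to the $A_{ij}(1)$ and hence to \cite[Lemme~2.4]{Ta}. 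Smallness of the domains of $I$ and $J$, and the two-out-of-three and retract properties for $W$, are immediate because weak equivalences are defined componentwise.

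The hard part will be the analog of Lemma~\ref{Jcell}, that $J$-cell $\subseteq W$, since this is the one place where the $\Ocal(\text{Cell}^n)$-module structure genuinely enters. Pushouts of $\Ar_s(0) \to \Ar_s(D_n)$ are harmless as before: they tensor each affected polyhom with the exact complex $D_n$ and leave the underlying categories untouched, so the resulting map is a componentwise quasi-isomorphism. The real work is a pushout of $\Acal_{ij} \to \Hcal_{ij}$, which adjoins to an object $P$ a new $1$-morphism $d \in \E_{ij}$ coherently isomorphic to an existing $c \in \E_{ij}$. Following the strategy of Lemma~\ref{Jcell}, I would factor this map through an intermediate object $(\Hcal_0)_{ij}$ that only fattens the endomorphism complex of a single $1$-morphism; the decomposition $\Hcal_0(0,0) \iso k \oplus \ker(\epsilon)$ into $k$ and an exact complex is a statement about one DG-category and carries over verbatim, giving that the first pushout is a quasi-isomorphism on polyhoms. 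For the second pushout one must generalize Lemma~\ref{FullyFaithful}, where $\Hcal_0$ has one $1$-morphism and $\Hcal$ has two, and the pushout is computed explicitly.

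The main obstacle is precisely this explicit pushout computation in the $\Ocal(\text{Cell}^n)$-enriched setting. The subtlety is that the new $1$-morphism $d$ now appears as an input or output color of cells of every arity, so one must describe all polyhoms $Q(s)$ for cells $s$ whose path uses $d$ and show they are quasi-isomorphic to the polyhoms in which $d$ is replaced by $c$. I would adapt the triple presentation $h \cdot p \cdot (h_1, \dots, h_b)$ of \cite[Lemma~1.29]{CM} used in Lemma~\ref{FullyFaithful}, now also bookkeeping the path and base-point data recorded by $\Ocal(\text{Cell}^n)$: fully-faithfulness of the coherent isomorphism $c \isomap d$ guarantees that every occurrence of $d$ normalizes back to $c$ up to the contractible correction recorded by $\Hcal$, so that $\beta \colon P \to Q$ is the identity on cells avoiding $d$ and a quasi-isomorphism on the mixed cells. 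Once this pushout formula is in hand, the remaining filtration and spectral-sequence bookkeeping are routine, and $J$-cell $\subseteq W$ follows. Assembling the lemmas as in the proof of Theorem~\ref{ModelCatOperad} verifies all six hypotheses of Theorem~\ref{Ho}; the unital variant is then deduced by the same $\widetilde{(-)}$-adjunction argument as in Theorem~\ref{ModStructUOper}.
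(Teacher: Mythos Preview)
Your proposal is correct and its overall architecture matches the paper: the same generating sets $\Ical$ and $\Jcal$, the same slot-by-slot reduction to Lemmas~\ref{IinjSurj} and~\ref{JinjWSurj} for conditions (5) and (6), and the same smallness arguments. The one genuine divergence is condition~(4), the analog of Lemma~\ref{Jcell}. The paper disposes of this in a few lines by asserting that a pushout along $\underline{\Acal}^{ij} \to \underline{\Hcal}^{ij}$ decomposes into a pushout of colored DG-operads in the $(i,j)$ slot and an isomorphism on every other slot, so that Lemma~\ref{Jcell} applies verbatim. You instead flag this as the main obstacle and propose to redo the explicit pushout computation of Lemma~\ref{FullyFaithful} in the $\Ocal(\text{Cell}^n)$-enriched setting, tracking how the new $1$-morphism $d$ can appear as an edge of cells in arbitrary slots. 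Your caution is warranted: adjoining $d \in \E_{ij}$ creates new cells in $Q_{kl}$ whenever a path from $k$ to $l$ passes through $i$ and $j$, so the paper's slot decomposition is too quick as written (and for $i \neq j$ the phrase ``pushout of DG-operads in the $(i,j)$ slot'' does not literally parse, since $P_{ij}$ is not closed under the ambient composition). What the paper buys is brevity; your route actually supplies the argument that the brevity elides.
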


This is done in the same way as for colored DG-operads by defining generating cofibrations and generating trivial cofibrations. Fix $i, j \in I$ then there is a functor
\[ \underline{\enskip}^{ij} : \text{DG-oper}_\E \to 2\text{-Cat}_I \]
given by 
\begin{gather}
(\underline{\E}^{ij})_{kl}=\begin{cases} \E & i=k, j=l \\ \emptyset & \text{otherwise} \end{cases}, \qquad
(\underline{A}^{ij})_{kl}=\begin{cases} A & i=k, j=l \\ 0 & \text{otherwise} \end{cases}
\end{gather}

Given a complex $K$ and $s=(i_1 \stackrel{a_1}{\to} i_2 \stackrel{a_2}{\to} \cdots \stackrel{a_{n-1}}{\to} i_n, i_1 \stackrel{a}{\to}i_n)\in \text{Cell}_{i,j}$ we define Ar$^K_s$ to be the object in 2-Cat$_I$ with 
\begin{gather}
(\E_{\text{Ar}^K_s})_{kl}:=\begin{cases}\{a_1 : i_1 \to i_2, \dots, a_{n-1} : i_{n-1} \to i_n, a : i_1 \to i_n\} & k=1, l=j \\ \emptyset & \text{otherwise}\end{cases}\\
(\text{Ar}^K_s)_{kl}(t):=\begin{cases} K & k=i, l=j, t=s \\ 0 & \text{otherwise} \end{cases}
\end{gather}

The generating cofibrations and generating trivial cofibrations are the ones from colored DG-operads upgraded to 2-Cat$_I$.

\begin{defin}
\begin{enumerate}
\item $\Ical$ is the set of maps 
\[\{\emptyset \to \underline{\Acal}^{i,j}, \Ar_{s}^{S_{n-1}}\to \Ar_{s}^{D_n} \mid  i,j \in I, s \in \text{Cell}(i,j)\}.\]
\item $\Jcal$ is the set of maps 
\[\{\underline{\Acal}^{i,j} \to \underline{\Hcal}^{i,j}, \Ar_{s}^{0}\to \Ar_{s}^{D_n} \mid i,j \in I, s \in \text{Cell}(i,j)\}.\]
\end{enumerate}
\end{defin}

\begin{proof}[Proof of theorem \ref{2CatIModelStruct}]
We check the conditions in theorem \ref{Ho}. To prove (4) we notice that for a push-out diagram we get
\[ \vcenter{\xymatrix{\underline{\Acal}^{ij} \ar[d] \ar[r] & P \ar[d] \\ \underline{\Hcal}^{ij} \ar[r]^(0.75){\ulcorner} & Q}} \Rightarrow
\vcenter{\xymatrix{\Acal \ar[d] \ar[r] & P_{ij} \ar[d] \\ \Hcal \ar[r]^(0.75){\ulcorner}& Q_{ij}}} \wedge \vcenter{\xymatrix{0 \ar[d] \ar[r] & P_{kl} \ar[d] \\ 0 \ar[r]^(0.75){\ulcorner} & Q_{kl}}} \text{for } k\neq i, l \neq j.
\]
The rightmost diagram implies that $P_{kl} \to Q_{kl}$ is an isomorphism for $k\neq i, l \neq j$. The other is a push-out of colored DG-operads. Since the conditions for being a weak equivalence in 2-Cat$_I$ is termwise the same as for colored DG-operads (4) follows from lemma \ref{Jcell}. The arguments for pushouts for $\Ar_{s}^{0}\to \Ar_{s}^{D_n}$ is similar.

For (5) and (6) we observe that for existence of lifts
\[ \vcenter{\xymatrix{\underline{\Bcal}^{ij} \ar[d] \ar[r] & P \ar[d] \\ \underline{\Ccal}^{ij} \ar@{-->}[ur] \ar[r]& Q}} \Leftrightarrow
\vcenter{\xymatrix{\Bcal \ar[d] \ar[r] & P_{ij} \ar[d] \\ \Ccal \ar@{-->}[ur] \ar[r] & Q_{ij}}}, \qquad
\vcenter{\xymatrix{\Ar_{s_1}^{K_1} \ar[d] \ar[r]^a & P \ar[d]_F \\ \Ar_{s_2}^{K_2} \ar@{-->}[ur] \ar[r] & Q}} \Leftrightarrow
\vcenter{\xymatrix{K_1 \ar[d] \ar[r] & P_{ij}(a(s_1)) \ar[d] \\ K_2 \ar@{-->}[ur] \ar[r] & Q_{ij}(F(a(s_1)))}}
\]
where $\Bcal$ and $\Ccal$ are arbitrary DG-operads, $K_1, K_2$ complexes and $s_1, s_2$ cells. Hence, the requirement for lifting these in 2-Cat$_I$ is (since weak equivalences are also defined termwise) equivalent to the corresponding lifts for colored DG-operads. The conditions now follows from lemma \ref{IinjSurj} and \ref{JinjWSurj}. The proof of the rest of the conditions is identical to the corresponding conditions in the proof of theorem \ref{ModelCatOperad}.
\end{proof}

\subsection{Bar construction for 2-Cat$_I$}
For colored DG-operad the free operad/co-free cooperad is given by
\[F(P)(n)=\bigoplus_{\substack{T \text{ planar}\\ n \text{ tree}}}\bigoplus_{\substack{\text{colorings}\\ \text{of edges}}} \bigotimes_{\substack{\text{vertex }v \\ \text{of } T}} P(\text{In}(v);\text{Out}(v)),  \]
where In denotes the colors of the inputs at $v$ and Out denotes the color of the output. Let $A \in \N$-coll$_\E,I$. With the polygon combinatorial model described in section \ref{Def2CatI} the free operad construction has the following natural generalization to 2-Cat$_I$
\begin{align}
F(A)(n)&=\bigoplus_{\substack{\text{Cellular}\\ \text{decomposition} \\ \text{of } n\text{-gon} }}\bigoplus_{\substack{\text{markings of}\\ \text{polytope and} \\ \text{decomposition}}} \bigotimes_{\substack{\text{Cell } c \text{ in}\\ \text{decomposition}}} A(\text{top}(c);\text{bot}(c))\\
&=\bigoplus_{D \in \text{Diag}_{\E,I}(n)} \bigotimes_{c \in \text{Cell}_{\E,I}(D)} A(\text{top}(c); \text{bot}(c)).
\end{align}

\begin{lemma}
$F_{\text{Set}}(\operatorname{Cell}_{\E,I})=\operatorname{Diag}_{\E,I}$.
\end{lemma}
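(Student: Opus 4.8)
The plan is to unwind the set-level free-operad formula and match it against the definition of $\operatorname{Diag}_{\E,I}$ from section \ref{Def2CatI}. Since $\operatorname{Cell}_{\E,I}(s_1,\dots,s_n;t)$ is a one-point set when the endpoints match (via $s,t\colon \E\to I$) and is empty otherwise, the product over vertices in the free-operad formula $F_{\mathrm{Set}}(\operatorname{Cell}_{\E,I})(n)=\bigsqcup_{n\text{-tree }T}\prod_{v\in T}\operatorname{Cell}_{\E,I}(\operatorname{In}(v);\operatorname{Out}(v))$ collapses to a single point on each nonempty summand. Hence $F_{\mathrm{Set}}(\operatorname{Cell}_{\E,I})(n)$ is simply the set of planar rooted $n$-trees $T$ whose edges are colored by $\E$ and whose vertices carry compatible $I$-labels: for each vertex $v$ the colors of its input edges together with its output edge form an element of $\operatorname{Cell}_{\E,I}(\operatorname{In}(v))$, and along each internal edge the output color of the lower vertex agrees with the relevant input color of the upper vertex. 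I will call such a gadget a \emph{colored tree}.

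First I would invoke the bijection asserted at the start of section \ref{Def2CatI} between marked trees with a single inner vertex and single cells, i.e. rooted oriented colored polygons. Grafting two such trees corresponds to gluing the bottom edge of one polygon onto a top edge of another along a common $\E$-color. Therefore a colored tree $T$ assembles, by iterated gluing along matching colored edges according to the tree structure, into a single $n$-gon equipped with a cellular decomposition whose faces are exactly the polygons attached at the vertices of $T$. This produces a colored diagram $D\in\operatorname{Diag}_{\E,I}(n)$: the internal edges of $T$ become the internal edges of the decomposition carrying their common $\E$-label, the external edges of $T$ become the boundary edges, and the $I$-labeled polygon vertices glue consistently to give the $I$-labeling of $D$.

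Next I would construct the inverse. Given a diagram $D$, that is an $n$-gon with a cellular decomposition into colored cells, I form the dual graph whose vertices are the faces of $D$ and whose edges record adjacency across internal edges of the decomposition, taking the root edge of $D$ as the output. Contractibility of the $n$-gon together with the fact that each face is a polygon with a unique bottom edge guarantees that this dual graph is a planar rooted tree $T_D$, and that the cell sitting at each face recovers exactly the cell decoration at the corresponding vertex of $T_D$. One checks that these two constructions are mutually inverse and that both preserve the $\E$-colorings and $I$-labelings, giving the claimed bijection of underlying sets. Finally, the operadic structure maps on the two sides are grafting of trees and insertion of a subdivided polygon into a face, and these correspond under the bijection, so the identification holds as operads in sets.

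The main obstacle will be the simultaneous bookkeeping of the two kinds of decoration — the $I$-labels on polygon vertices and the $\E$-labels on edges — and verifying that the matching conditions built into the free-operad coloring are precisely the gluing conditions that make a cellular decomposition of the $n$-gon well-defined. In particular one must confirm that the rooted planar orientation of $T$, i.e. the ordering of the input slots at each vertex, corresponds bijectively to the planar cyclic arrangement of edges around each face, so that no planar-embedding data is lost or created when passing between colored trees and diagrams.
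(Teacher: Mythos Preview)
The paper states this lemma without proof, treating it as immediate from the tree/polygon correspondence set up at the beginning of section \ref{Def2CatI}. Your argument supplies exactly the details that correspondence is meant to encode: the free-operad formula on $\operatorname{Cell}_{\E,I}$ yields the set of $\E$-edge-colored, $I$-vertex-compatible planar rooted trees, and the tree-to-diagram bijection (grafting $\leftrightarrow$ gluing cells along a shared chord, dual-graph for the inverse) identifies this with $\operatorname{Diag}_{\E,I}$. This is the intended argument and it is correct.

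One small point worth tightening: when you say that $\operatorname{Cell}_{\E,I}(s_1,\dots,s_n;t)$ is a one-point set ``when the endpoints match'', the matching condition is twofold --- the sequence $(s_1,\dots,s_n)$ must itself be a path in $\operatorname{Path}_{\E,I}(n)$ (consecutive $s_k$ composable over $I$) \emph{and} the endpoints of that path must agree with those of $t$. You do address this later under ``compatible $I$-labels'', but it is cleaner to state both conditions up front so that the collapse of the product over vertices is unambiguous.
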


Just like for operads we have maps $F(A) \circ F(A) \to F(A)$ and $F(A) \to F(A) \circ F(A)$. 

\begin{prop}\label{FreeAdjoint}
The free functor $F: \N \operatorname{-seq}_{I} \to 2\operatorname{-Cat}_I$ is the right adjoint of the forgetful functor.
\end{prop}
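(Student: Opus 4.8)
The plan is to prove the proposition by the same formal argument that handled the case of colored DG-operads (the analog being \cite[Cor.~1.11]{GJ}), the one genuinely new ingredient being the combinatorial identity $F_{\operatorname{Set}}(\operatorname{Cell}_{\E,I})=\operatorname{Diag}_{\E,I}$ established just above. Write $\For\colon 2\text{-Cat}_I\to\N\text{-seq}_I$ for the functor remembering only the underlying $\N$-sequence. The aim is to produce a bijection
\[
\Hom_{2\text{-Cat}_I}\bigl(A,F(B)\bigr)\;\cong\;\Hom_{\N\text{-seq}_I}\bigl(\For(A),B\bigr),
\]
natural in $A\in 2\text{-Cat}_I$ and $B\in\N\text{-seq}_I$, which exhibits $F$ as right adjoint to $\For$.

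First I would pin down the counit. In the expansion $F(B)(n)=\bigoplus_{D\in\operatorname{Diag}_{\E,I}(n)}\bigotimes_{c\in\operatorname{Cell}_{\E,I}(D)}B(\operatorname{top}(c);\operatorname{bot}(c))$, the combinatorial lemma identifies the diagrams $D$ consisting of a single cell precisely with $\operatorname{Cell}_{\E,I}(n)$, and on that one-cell summand the right-hand side is canonically $B(n)$. Projecting onto it defines a natural morphism of $\N$-sequences $\varepsilon_B\colon\For F(B)\to B$, the proposed counit, and the candidate bijection sends $\Phi\colon A\to F(B)$ to $\varepsilon_B\circ\For(\Phi)$. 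The inverse should reconstruct $\Phi$ from a given $f\colon\For(A)\to B$ by prescribing, summand by summand over diagrams $D$, the cellwise image of $f$, using the identification $F_{\operatorname{Set}}(\operatorname{Cell})=\operatorname{Diag}$ to organize the $\E$- and $I$-labeled data.

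The two assignments are mutually inverse on the underlying graded objects essentially by construction, and naturality in $A$ and $B$ is formal. The hard part will be verifying that the reconstructed $\Phi$ is an honest morphism in $2\text{-Cat}_I$: it must be a map of $\Ocal(\operatorname{Cell}^n)$-modules compatible with \eqref{OCell}, and it must intertwine the associative composition $A(n)\otimes_{\Ocal(\operatorname{Path}^n)}\bigl(A(m_1)\boxtimes\cdots\boxtimes A(m_n)\bigr)\to A(m_1+\dots+m_n)$ with the grafting/concatenation structure on $F(B)$. My expectation is that each of these checks decomposes color-component by color-component and, via $F_{\operatorname{Set}}(\operatorname{Cell}_{\E,I})=\operatorname{Diag}_{\E,I}$, is converted into exactly the verification already carried out for DG-operads, now with planar trees and $\E$-colored edges replaced by $I$-labeled polygons with cellular decompositions; the signs and the one-dimensional $\det$-data are inherited unchanged from that setting, so no new homological input is required beyond the bookkeeping that the combinatorial lemma packages.
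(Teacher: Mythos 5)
Your proof attacks the wrong direction of the adjunction, and the statement it tries to establish is in fact false. Taking the word ``right adjoint'' literally, you aim for a bijection $\Hom_{2\text{-Cat}_I}(A,F(B))\cong\Hom_{\N\text{-seq}_I}(\For(A),B)$ whose counit is the projection of $\For F(B)$ onto the one-cell summands. That is the \emph{cofree} universal property; it pertains to the cooperad structure on $F(B)$ (maps of coalgebras \emph{into} $F(B)$), not to its algebra structure, and for algebra maps it fails already in the simplest case. Take $I=\{*\}$, one color, everything concentrated in arity $1$, so that objects of $2\text{-Cat}_I$ are just associative algebras and $F$ is the tensor algebra $T(-)$. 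The algebra maps $\Phi,\Phi'$ from the free algebra on one generator $x$ to $T(ky)$ determined by $\Phi(x)=y\otimes y$ and $\Phi'(x)=y^{\otimes 3}$ are distinct, yet both compose with $\varepsilon_B$ to the zero map (every $\Phi(x^k)$ has tensor degree $\ge 2$), so your assignment is not injective; a similar computation shows it is not surjective either. The same obstruction sinks your ``inverse'' step: to prescribe the component of $\Phi$ in a multi-cell summand $\bigotimes_{c\in D}B(\operatorname{top}(c);\operatorname{bot}(c))$ ``cellwise'' you would need a comultiplication $A(n)\to\bigotimes_c A(\cdots)$, and $A$, being an algebra in $\N\text{-seq}_{I,\E}$, carries no such map. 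Nor can you simply set the multi-cell components to zero: the product on $F(B)$ is grafting of diagrams, so it sends two one-cell elements into a two-cell summand, and a map with only one-cell components cannot intertwine the compositions.

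The content that is actually true --- and what the paper proves, the word ``right'' notwithstanding (compare the proof of Lemma \ref{CobBarCofib}, where the same functor is correctly called a \emph{left} adjoint) --- is the free property: $\Hom_{2\text{-Cat}_I}(F(V),A)\cong\Hom_{\N\text{-seq}_I}(V,\For(A))$. The counit to construct lives on the other side: a natural morphism $\mu:F(\For(A))\to A$ in $2\text{-Cat}_I$. The paper builds it by fixing a diagram with a cellular decomposition, enumerating its cells from the inside out and clockwise, and contracting them by iterated use of the composition maps $A(n)\otimes_{\Ocal(\text{Path}^n)}(A(m_1)\boxtimes\cdots\boxtimes A(m_n))\to A(m_1+\cdots+m_n)$; since $\mu$ is defined from the operad structure of $A$, it is a morphism in $2\text{-Cat}_I$. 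A map $f:V\to\For(A)$ then factors as $F(V)\xrightarrow{F(f)}F(\For(A))\xrightarrow{\mu}A$, uniquely because a morphism out of $F(V)$ in $2\text{-Cat}_I$ is determined on the one-cell diagrams. Your reduction of the combinatorics to the operad case via $F_{\operatorname{Set}}(\operatorname{Cell}_{\E,I})=\operatorname{Diag}_{\E,I}$ is fine and is indeed how the paper proceeds, but it must be wrapped around this construction of $\mu$, not around a projection counit.
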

\begin{proof}
The proof is almost identical to the corresponding proof for operads (see e.g. \cite[Cor. 1.11]{GJ}).
We need to prove that for any $V\in \N$-coll$_I$ and $A \in $2-Cat$_I$ every map $f: V \to A$ factors through a map $F(V) \to A$ in 2-Cat$_I$. It suffices to show that for every $A \in$ 2-Cat$_I$ there is a natural map $\mu: F(A) \to A$ in 2-Cat$_I$. Fix $D \in \text{Diag}_{\E,I}$ and a decomposition. Enumerate the cells in the chosen decomposition increasing from inside to outside and clockwise. E.g. in the example
\[\vcenter{\vbox{\xymatrix{& \bullet \ar[rr]^{s_3} \ar@/^/[rrrd]^{t_3} \ar@/_/[rrrd]_{t_4}& & \bullet \ar[rd]^{s_4} & \\
\bullet \ar[ru]^{s_2} & & & & \bullet \ar@/^/[dl]^{s_5} \ar@/_/[dl]_{t_5} \\
& \bullet \ar[ul]^{s_1} \ar[uu]^{t_1} \ar[rr]_u & & \bullet &}}} \]
$c_1=(t_1,t_4,t_5,u), c_2=(s_1,s_2,t_1), c_3=(t_3,t_4), c_4=(t_5,s_5)$ and $c_5=(s_3,s_4,t_3)$. The desired map is obtained by iterated use of the operad structure on $A$ using this enumeration. E.g.
\vspace{-1em}
\begin{align}
&A(\text{top}(c_1); \text{bot}(c_1)) \otimes \Bigl( A(\text{top}(c_2); \text{bot}(c_2)) \boxtimes A(\text{top}(c_3); \text{bot}(c_3)) \boxtimes A(\text{top}(c_4); \text{bot}(c_4)) \Bigr)\\[-1em]
& \qquad \to A(s_1, s_2, t_3, s_5;u),\\[-0.3em]
& A(s_1, s_2, t_3, s_5;u) \otimes \bigl( 1 \boxtimes A(\text{top}(c_5); \text{bot}(c_5)) \boxtimes 1 \bigr) \to A(s_1, s_2, s_3, s_4, s_5;u).
\end{align}
Since the map is defined using the operad structure on $A$ the defined map $F(A) \to A$ is a morphism in 2-Cat$_I$.
\end{proof}

To define a Bar construction in 2-Cat$_I$ we only need to define the second differential. For operads this was defined by removing edges of trees. In the diagram language this corresponds to removing chords. Let $c$ and $c'$ be cells that can be glued together, i.e. the $k$th entry in top$(c)=$bot$c'$ for some $k$. Denote the cell obtained by gluing the cells and removing the gluing edge by $c \cdot c'$
\[ \vcenter{\vbox{\xymatrix{\bullet \ar[r]^b & \bullet \ar[d]^c \\ \bullet \ar[u]^a \ar[r]_d & \bullet}}} \cdot \vcenter{\vbox{\xymatrix{& \bullet \ar[rd]^y & \\ \bullet \ar[ru]^x \ar[rr]_b & & \bullet}}}= \vcenter{\vbox{\xymatrix@=1em{& \bullet \ar[rd]^y & \\ \bullet \ar[ru]^x & & \bullet \ar[d]^c\\ \bullet \ar[u]^a \ar[rr]_d & &\bullet}}}\]
Such a pair $c,c'$ defines a differential $d_{c,c'}$ by the map
\[F(A)(\text{top}(c); \text{bot}(c)) \otimes \bigl[1^k \boxtimes F(A)(\text{top}(c'); \text{bot}(c')) \boxtimes 1^{|\text{top}(c)|-1-k}\bigr] \to F(P)(\text{top}(c\cdot c'); \text{bot}(c \cdot c'))\]
The bar construction $\Bcal(A)$ in 2-Cat$_I$ is taking free of $\Sigma A$
with the differential being the original differential plus the sum of all $d_{c,c'}$ for all such pairs $c,c'$. For the cobar $\Bcal^*(A)$ we take cofree of $\Sigma^{-1}A$ with differential being the sum of the original differential $d_A$ and a second term which is defined by reversing all arrows in the definition of the second term in the bar construction.

\begin{prop}
The functor $\Bcal$ is right adjoint to $\Bcal^*$.
\end{prop}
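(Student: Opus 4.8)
The plan is to transplant the operad-level bar--cobar adjunction $\Hom_{\text{DG-Oper}}(\Bcal^*(Q),P)=\Hom_{\text{DG-Coop}}(Q,\Bcal(P))$ of \cite[Thm.~2.17]{GJ} to the present setting, using that the chord-diagram combinatorics of Section~\ref{Def2CatI} is just an $I$-labelled refinement of planar marked trees (via the lemma $F_{\text{Set}}(\operatorname{Cell}_{\E,I})=\operatorname{Diag}_{\E,I}$). Concretely I want to exhibit a natural bijection
\[\Hom_{2\text{-Cat}_I}(\Bcal^*(Q),A)\cong \Hom_{2\text{-CoOp}_I}(Q,\Bcal(A)),\]
where $Q$ ranges over connected $2$-cooperads (so that $\Bcal^*$ is defined) and $A$ over $2$-Cat$_I$. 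Recall that $\Bcal^*(Q)=F(\Sigma^{-1}Q)$ with differential $d_Q+\partial^*$ and $\Bcal(A)=C(\Sigma A)$ with differential $d_A+\partial$.

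First I would treat the underlying graded objects, forgetting the twisting terms $\partial,\partial^*$. By the free--forgetful universal property established in Proposition~\ref{FreeAdjoint}, a morphism of graded $2$-categories $F(\Sigma^{-1}Q)\to A$ is the same as a morphism of $\N$-sequences $\Sigma^{-1}Q\to A$; dually, the cofree property of $C$ identifies a morphism of graded $2$-cooperads $Q\to C(\Sigma A)$ with a morphism of $\N$-sequences $Q\to \Sigma A$. The suspension shift gives $\Hom(\Sigma^{-1}Q,A)\cong\Hom(Q,\Sigma A)$, so both sides restrict to the same datum, a single twisting map $\tau\colon Q\to\Sigma A$. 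This fixes the bijection at the level of sets; it then remains to match the two differential-compatibility conditions.

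This matching is the core of the argument. A graded map $\Bcal^*(Q)\to A$ is a chain map for $d_Q+\partial^*$ and $d_A$ precisely when $\tau$ satisfies a Maurer--Cartan equation $d_A\tau-\tau d_Q=\mu_A\circ(\tau\boxtimes\tau)\circ\Delta_Q$, in which the quadratic term uses the operad composition $\mu_A$ of $A$ along a single added chord and the comultiplication $\Delta_Q$ of $Q$; the single-chord component of $\partial^*$ is exactly what produces this term. Dually, a graded map $Q\to\Bcal(A)$ commutes with $d_Q$ and $d_A+\partial$ exactly when the same equation holds, the single-chord component of the bar differential $\partial$ (gluing two cells along a chord, again using $\mu_A$) being the transpose of the cobar term under the diagram combinatorics. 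Thus the two conditions literally coincide, and $\tau$ defines a morphism on one side if and only if it does on the other.

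The main obstacle will be the sign and higher-arity bookkeeping needed to be sure these two equations agree on the nose, not merely up to sign. Two points need care. First, the determinant twists $\det(T_w)$ coming from the suspension isomorphism $\bigotimes_i V_i[-1]\cong(\bigotimes_i V_i)[-|I|]\otimes\Lambda^{|I|}$ of \cite[Lemma~3.2.9]{GK} enter both $\partial$ and $\partial^*$, and I must check that the induced signs cancel consistently on the two sides. Second, one must verify that the multi-chord components contribute no extra constraints: since $\partial^*$ is the unique derivation of the free $2$-category extending its single-chord part and $\partial$ is the unique coderivation of the cofree $2$-cooperad extending its single-chord part, once $\tau$ and its suspension are matched the full (co)derivation conditions are determined by, and reduce to, the single generating equation above. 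Granting these verifications, naturality in $Q$ and $A$ is immediate because the free, cofree, and suspension identifications are all natural, and the adjunction follows.
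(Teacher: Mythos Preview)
Your proposal is correct and follows essentially the same route as the paper: both arguments use the free/cofree universal properties (Proposition~\ref{FreeAdjoint}) to reduce a map on either side to a single degree-one map of $\N$-sequences, and then identify the differential-compatibility condition on each side with the same Maurer--Cartan (twisting cochain) equation. Your treatment is in fact more careful than the paper's, which simply asserts that ``the condition \dots\ is the same diagram'' without spelling out the single-chord reduction via the (co)derivation property or the sign bookkeeping.
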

\begin{proof}
The proof is the same as for operads. To check that 
\[\Hom_{2\text{-Cat}_I}(A, \Bcal^*(C))=\Hom_{2\text{-coCat}_I}(\Bcal(A), C)\]
we notice that the underlying space for both $\Bcal$ and $\Bcal^*$ is free of something. By proposition \ref{FreeAdjoint} a map $f$ in both Hom spaces are determined by a map $\bar{f}: \Sigma A \to C$. Let $\mu: F(C) \to C$ be the map defined in the proof of proposition \ref{FreeAdjoint}. Notice that the restriction $\mu: C \odot C \to C$ is exactly $\delta$. The corresponding map $\Delta : A \to A \odot A$ in 2-coCat$_I$ is $\delta^*$. The condition for $\bar{f}$ to induce a map in 2-Cat$_I$ is compatibility with the differentials, i.e. the following diagram is commutative
\[\xymatrix@C=4em{\Sigma A \ar[d]^{d_A +\delta^*} \ar[r]^{\bar{f}} & C \ar[d]^{d_C} \\ \Sigma A \oplus (\Sigma A \odot \Sigma A) \ar[r]^(0.65){f + \delta (\bar{f} \odot \bar{f})} & C}\]
This condition; $f \circ d_A-d_C \circ \bar{f}+\Delta (f \odot f) \mu=0$ is the Maurer-Cartan equation for $\bar{f}$ and maps satisfying it are called twisting cochains. The condition for $\bar{f}$ to induce a map in $\Hom_{2\text{-coCat}_I}(\Bcal(A), C)$ is the same diagram.
\end{proof}

As in the DG-operad case $\Bcal^* \Bcal(A)$ is a cofibrant replacement of $A$.

\begin{lemma}\label{B*Bres2Cat}
The counit $\Bcal^* \Bcal (A) \to A$ is a weak equivalence for any $A \in 2\text{-Cat}_I$.
\end{lemma}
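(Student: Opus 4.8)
The plan is to mimic closely the proof of Lemma \ref{CobBarRes}, replacing the tree combinatorics by the chord-diagram combinatorics of Section \ref{Def2CatI}. First I would note that both $\Bcal$ and $\Bcal^*$ are built from the free/cofree construction on the underlying collection, so $\Bcal^* \Bcal(A)$ has the same $0$-morphisms $I$ and $1$-morphisms $\E$ as $A$, and the counit $\Bcal^* \Bcal(A) \to A$ is the identity on this data. Consequently, condition (1b) in the weak-equivalence definition of Theorem \ref{2CatIModelStruct}---that $H^0(F(1))$ be an equivalence of DG-categories for each $(i,j)$---follows automatically from condition (1a): essential surjectivity is immediate since the counit is the identity on objects, and full faithfulness on $H^0$ is the $n=1$ case of the quasi-isomorphism statement. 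Thus everything reduces to proving (1a): for every $n$ and every cell $s \in \text{Cell}^n(i,j)$ the map $\Bcal^* \Bcal(A)_{ij}(s) \to A_{ij}(s)$ is a quasi-isomorphism.

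Next I would compute $\Bcal^* \Bcal(A)(n)$ explicitly using the identification $F_{\text{Set}}(\text{Cell}_{\E,I}) = \text{Diag}_{\E,I}$. Expanding cofree-of-$\Sigma^{-1}$ of free-of-$\Sigma$ of $A$ produces a double sum over a diagram $D$ together with, at each of its cells, a sub-decomposition; combinatorially this is a pair $D \geq D'$, where $D'$ is obtained from $D$ by contracting a set of chords. As in the operad case the determinant line coming from the cobar suspension cancels part of the determinant line from the bar suspension, leaving
\[
\Bcal^* \Bcal(A)(n) \iso \bigoplus_{\substack{D \in \text{Diag}_{\E,I}(n) \\ D \geq D'}} \Bigl( \bigotimes_{c \in \text{Cell}(D)} A(\text{top}(c);\text{bot}(c)) \Bigr)[-|D|+|D'|] \bigotimes_{w \in D'} \det(D_w),
\]
where $D_w$ is the sub-diagram of $D$ contracted to the cell $w$ of $D'$. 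The summand with $D = D'$ the one-cell diagram on $n$ inputs is exactly $A(n)$, and projecting onto it yields the counit, which is a morphism in $2\text{-Cat}_I$.

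The differential has three terms $d_1 + d_2 + d_3$, with $d_1$ the internal differential of $A$, $d_2$ the part induced by the multiplication of $A$, and $d_3$ the gluing differential on diagrams. Viewing this as the total complex of a third-quadrant bicomplex with differentials $d_1+d_2$ and $d_3$, I would equip it with the bounded-below exhaustive filtration by $|D|$ and invoke convergence of the associated spectral sequence, so that it suffices to prove acyclicity with respect to $d_3$ alone. Fixing $n$ and $D$, the $d_3$-complex factors off $\bigotimes_{c} A(\text{top}(c);\text{bot}(c))$ and reduces to the purely combinatorial complex whose $(-i)$-term is $\bigoplus_{D \geq D',\, |D'|-|D|=i} \bigotimes_{w} \det(D_w)$. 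A choice of $D'$ is a choice of a subset of internal chords of $D$ to contract, and the differential on the one-dimensional lines $\det(D_w)$ is the alternating-sum face map; this identifies the complex with the augmented simplicial chain complex of the simplex on the set of internal chords of $D$. Since a simplex is contractible, the complex is acyclic except in degree $0$, where the surviving term $D' = D$ contributes $A(n)$, giving the desired quasi-isomorphism.

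The \emph{main obstacle} is the purely combinatorial heart of the argument: verifying that chord contraction in the polygon model really does assemble into the chain complex of a simplex, exactly as edge contraction of trees did in Lemma \ref{CobBarRes}. The delicate point is to show that arbitrary subsets of the internal chords of a fixed diagram $D$ may be contracted independently---i.e. that the poset $\{D' : D \geq D'\}$ is the Boolean lattice on the internal chords of $D$, with $d_3$ matching the simplicial face maps and signs---since in the polygon picture the admissibility conditions for gluing cells look a priori more delicate than the edge conditions for trees. Once this identification is in place, the spectral-sequence bookkeeping and the determinant-line cancellations carry over verbatim from the operad case, and the lemma follows.
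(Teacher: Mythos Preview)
Your proposal is correct and follows essentially the same approach as the paper: compute $\Bcal^*\Bcal(A)$ as a sum over pairs $D\geq D'$, filter by $|D|$, and reduce to the same combinatorial simplex complex as in Lemma~\ref{CobBarRes}. The paper dispatches your ``main obstacle'' in one line by invoking the bijection between planar trees and chord diagrams established in Section~\ref{Def2CatI}: once $D$ is fixed, the poset $\{D':D\geq D'\}$ ordered by chord removal is literally the poset $\{T':T\geq T'\}$ ordered by edge contraction for the corresponding tree, so the complex $\bigotimes_x\det(D_x)$ \emph{is} the complex $\bigotimes_w\det(T_w)$ already analyzed, and no separate verification of the Boolean-lattice structure is needed.
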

\begin{proof}
The proof is similar to the proof for DG-operads in lemma \ref{CobBarRes} but using the diagram combinetyatorics. We calculate $\Bcal^* \Bcal(A)(d)$ for a cell $\nabla$ using the same notation $\det$ as in the proof of the DG-operad lemma.
\[\Bcal^* \Bcal(A)(\nabla)=\bigoplus_{D \in \text{Dec}(\nabla)} \bigotimes_{d \in \text{Cell}(D)} \bigoplus_{C \in \text{Dec}(d)} \bigotimes_{c \in \text{Cell}(C)} A(c) \otimes \det(C)^* \otimes \det(D)[-|D|+|C|].
\]
As in the proof of that lemma we notice that a $c$ cell in $C$ also occurs in Cell$(D)$ for some decomposition $D$. Write $D \geq D'$ if the decomposition $D'$ is obtained from $D$ by removing a number of chords. Hence, each $A(d)$ occurs once for each $D \geq D'$ and we get
\[ \Bcal^* \Bcal(A)(\nabla)=\bigoplus_{\substack{D \in \text{dec}(\nabla)\\ D \geq D'}} \bigotimes_{c \in \text{Cell}(D)} A(c)[-|D|+|D'|] \bigotimes_{x \in \text{Cell}(D')}\det(D_x).\]
The subdiagrams $D_x$ of $D$ corresponds to the $T_w$ in the tree combinatorics. The only difference between the tree combinatorics for DG-operads and the diagram combinatorics is that with diagrams the number of possible decompositions depends on the labelling of the vertices. Looking at the above formula we see that once we fix a decomposition $D$ the rest is exactly the same for trees. In particular, the combinatorial complex $\otimes_x \det(D_x)$ is the same as the complex $\otimes_w \det(T_w)$ from lemma \ref{CobBarRes}. We already proved that this complex is acyclic except for in degree 0 (which corresponds to $D=D'$) where it is $k$. This finishes the proof.
\end{proof}

\begin{lemma}
For any $A \in$ 2-Cat$_I$ the object $\Bcal^* \Bcal (A)$ is cofibrant.
\end{lemma}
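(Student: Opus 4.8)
The plan is to transport the proof of the DG-operad statement (Lemma \ref{CobBarCofib}) verbatim into the chord-diagram combinatorics of 2-Cat$_I$. The three ingredients are the same: free objects are cofibrant, quasi-free objects are cofibrant because each generator-adjunction is a pushout of a generating cofibration, and $\Bcal^* \Bcal(A)$ is quasi-free. The only thing that changes is that the tree bookkeeping of Lemma \ref{CobBarCofib} is replaced by the decomposition bookkeeping already set up in Lemma \ref{B*Bres2Cat}.

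First I would show that $F(V)$ is cofibrant for any $V \in \N\text{-seq}_I$. By the free--forgetful adjunction (Proposition \ref{FreeAdjoint}), the left lifting property of $\emptyset \to F(V)$ against trivial fibrations in 2-Cat$_I$ transposes to the left lifting property of $V$ against termwise surjective quasi-isomorphisms of $\N$-sequences. Since weak equivalences and fibrations in 2-Cat$_I$ are defined termwise (Theorem \ref{2CatIModelStruct}), this reduces to a collection of lifting problems against surjective quasi-isomorphisms in the category of complexes of $k$-vector spaces, where such lifts always exist.

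Next I would introduce the notion of a quasi-free object of 2-Cat$_I$, namely one admitting a filtration $A_0 = F(V) \subset A_1 \subset \cdots$ with $A = \bigcup_i A_i$ and, at each stage, $A_i = A_{i-1}\langle x_i\rangle$ with $dx_i \in A_{i-1}$. Exactly as in the operad case, each inclusion $A_{i-1}\to A_i$ sits in a pushout of a generating cofibration $\Ar_s^{S_{n-1}} \to \Ar_s^{D_n}$ (for the cell $s$ and degree $n$ determined by the generator $x_i$), so every quasi-free object is cofibrant. To finish I would verify that $\Bcal^* \Bcal(A)$ is quasi-free using the explicit decomposition from Lemma \ref{B*Bres2Cat},
\[\Bcal^* \Bcal(A)(\nabla)=\bigoplus_{\substack{D \in \text{dec}(\nabla)\\ D \geq D'}} \bigotimes_{c \in \text{Cell}(D)} A(c)[-|D|+|D'|] \bigotimes_{x \in \text{Cell}(D')}\det(D_x),\]
filtering by the number $|D|$ of cells in the outer decomposition. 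The differential either acts internally on some $A(c)$ or contracts a chord, and in the latter case it strictly decreases $|D|$; hence adding the generators coming from the $A(c)$ in order of increasing $|D|$ produces a filtration of the required quasi-free form.

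The main obstacle I expect is purely combinatorial bookkeeping: one must check that the grafting part of the differential genuinely lands in the earlier filtration stage and that the generators can be indexed consistently by cells so that each extension is literally a pushout of some $\Ar_s^{S_{n-1}} \to \Ar_s^{D_n}$. This is the direct analogue of the tree argument in Lemma \ref{CobBarCofib}, the one subtlety being that in the diagram model the set of admissible decompositions of a cell depends on the labelling of the vertices by $I$; however, once a decomposition $D$ is fixed the relevant combinatorial complex $\bigotimes_x \det(D_x)$ coincides with the complex $\bigotimes_w \det(T_w)$ of the tree case, so no genuinely new argument beyond the translation from trees to chord diagrams is required.
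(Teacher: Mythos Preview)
Your proposal is correct and follows essentially the same approach as the paper: reduce to the quasi-free criterion from Lemma \ref{CobBarCofib} (transported to 2-Cat$_I$) and then exhibit the filtration of $\Bcal^*\Bcal(A)$ by $|D|$, the number of cells in the outer decomposition. Your write-up is in fact more explicit than the paper's, which simply asserts that the quasi-free argument ``directly generalizes'' and then writes down the same filtration $\Bcal^*\Bcal(A)(\nabla)^m$ indexed by $|D|=m$.
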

\begin{proof}
In the proof of the DG-operad version lemma \ref{CobBarCofib} we proved that quasi-free DG-operads are cofibrant. This directly generalizes to 2-Cat$_I$. Hence, all we need is to do is to find an exhaustive filtration on which the differential strictly lowers the degree and for which degree 0 is free. For a diagram $D$ let $|D|$ denote the number of cells in $D$. A such filtration is given by
\[ \Bcal^* \Bcal(A)(\nabla)^m=\bigoplus_{\substack{D \in \text{dec}(\nabla)\\ D \geq D', |D|=m}} \bigotimes_{c \in \text{Cell}(D)} A(c)[-|D|+|D'|] \bigotimes_{x \in \text{Cell}(D')}\det(D_x).\]
\end{proof}

\subsection{Unital 2-Cat$_I$}
As for colored DG-operads we can upgrade 2-Cat$_I$ to a unital version. Since $\E_{ij}$ is the set of 1-morphisms we have a distinguished unit object $\id_i$ in each $\E_{ii}$. Write $\Acal(0)=\bigsqcup_{i \in I, t \in \E_{ii}} \Acal_{ii}(\id_i,t)$. 

\begin{defin}
A unital 2-category $A$ is an object in 2-Cat$_I$ satisfying that for any $n \in \N$ and tuple $m_1, \dots, m_n$ with $0 \leq m_i \leq 1$ the morphism of complexes
\[ A(n) \boxtimes_{\Ocal(\text{Path}^n)} (A(m_1) \boxtimes \cdots \boxtimes A(m_n)) \to A(m_1 + \dots + m_n) \]
is an isomorphism. A unital morphism is a morphism $F$ in 2-Cat$_I$ such that $F(\id_i)=\id_i$ for all $i \in I$. We denote the unital category by 2-Cat$_I^u$.
\end{defin}

Note that this definition is compatible with the definition of unital DG-operads in the sense that for $A$ in 2-Cat$_I^u$ each $A_{ii}$ is a unital DG-operad. Analogously to the unital DG-operad case for $s=(i_1 \stackrel{a_1}{\to} i_2 \stackrel{a_2}{\to} \cdots \stackrel{a_{n-1}}{\to} i_n) \in \text{Path}_{i,j}$ its reduced expression is defined by removing all $\id_i$ for any $i \in I$; e.g. red$(s, \id_i, t, r, \id_j,  \id_j, v, u, \id_k)=(s, t, r, v, u)$. If $\red(s) = \emptyset$ then we write $\red(s)=\id$. In particular we have $A(s;t) \iso A(\red(s);t)$ in 2-Cat$_I^u$. There is a functor
\[ \bar{\enskip} : 2\text{-Cat}_I \to 2\text{-Cat}_I^u\]
For $i,j \in I$ with $i \neq j$ it is given by $\bar{\E}_{ij}:=\E_{ij}$ and $\bar{A}_{ij}(s;t) := A_{ij}(\red(s);t)$. For $i=j$ we set $\bar{\E}_{ii}:=\E_{ii} \cup \{\id_i\}$ and 
\[ \bar{A}_{ii}(s;t):= \begin{cases} A_{ii}(\red(s);t) & \text{if } \red(s) \neq \id_i \text{ and } t \neq \id_i \\
0 & \text{if } \red(s) = \id_i \text{ and } t \neq \id_i \\
k & \text{if } \red(d) = \id_i \text{ and } t = \id_i \\
0 & \text{if } \red(d) \neq \id_i \text{ and } t = \id_i 
\end{cases}\]
Note that with this definition the unital upgrade of the DG-operad $A_{ii}$ coincides with the upgrade $\tilde{A_{ii}}$ that was defined for DG-operads. In the same way as in the proof of lemma \ref{MapUnitalForget} one can show that $\Map_u(\bar{A}, B)\iso \Map(A, \For(B))$. This functor can be used to upgrade the generating cofibrations and generating trivial cofibrations from 2-Cat$_I$.

\begin{defin}
\begin{enumerate}
\item $\bar{\Ical}$ is the set of maps 
\[\{\bar{\emptyset} \to \bar{\underline{\Acal}}^{i,j}, \bar{\Ar}_{s}^{S_{n-1}}\to \bar{\Ar}_{s}^{D_n} \mid  i,j \in I, s \in \text{Cell}(i,j)\}.\]
\item $\bar{\Jcal}$ is the set of maps 
\[\{\bar{\underline{\Acal}}^{i,j} \to \bar{\underline{\Hcal}}^{i,j}, \bar{\Ar}_{s}^{0}\to \bar{\Ar}_{s}^{D_n} \mid i,j \in I, s \in \text{Cell}(i,j)\}.\]
\end{enumerate}
\end{defin}

The following theorem follows from theorem \ref{2CatIModelStruct} in the same way that theorem \ref{ModStructUOper} follows from theorem \ref{ModelCatOperad}.

\begin{thm}\label{2CatIUModelStruct}
The category 2-Cat$_I^u$ has a model category structure.
\end{thm}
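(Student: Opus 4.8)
The plan is to follow the template of Theorem~\ref{ModStructUOper}, transferring the model structure of Theorem~\ref{2CatIModelStruct} across the adjunction $\bar{\enskip} \dashv \For$ between $2\text{-Cat}_I$ and $2\text{-Cat}_I^u$. The engine is the natural isomorphism $\Map_u(\bar{A},B) \iso \Map(A,\For(B))$ recorded just before the statement. First I would spell out its consequence for lifting problems exactly as in the unital DG-operad case: for a generating map $F : X \to Y$ of $2\text{-Cat}_I$ and a morphism $G : P \to Q$ of unital $2$-categories, a lift in the square with left edge $\bar{F}$ and right edge $G$ corresponds under the adjunction to a lift in the square with left edge $F$ and right edge $\For(G)$. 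Hence $G$ has the right lifting property with respect to $\bar{\Ical}$ (resp.\ $\bar{\Jcal}$) if and only if $\For(G)$ has the right lifting property with respect to $\Ical$ (resp.\ $\Jcal$).

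Next I would \emph{define} the weak equivalences $\bar{W}$, fibrations and cofibrations of $2\text{-Cat}_I^u$ to be those morphisms whose image under $\For$ lies in the corresponding class of $2\text{-Cat}_I$; equivalently, one verifies the same termwise criteria on each $\bar{A}_{ij}(s)$ as in Theorem~\ref{2CatIModelStruct}. With this convention the two lifting identifications transfer directly: combining the correspondence above with the $2\text{-Cat}_I$ versions of Lemmas~\ref{IinjSurj} and~\ref{JinjWSurj} established in the proof of Theorem~\ref{2CatIModelStruct}, one obtains $\bar{\Ical}\text{-inj} = \mathbf{Surj}$ and $\bar{\Jcal}\text{-inj} \cap \bar{W} = \mathbf{Surj}$, which give conditions~(5) and~(6) of Theorem~\ref{Ho}.

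The remaining substantive point, and the one I expect to be the main obstacle, is condition~(4), namely $\bar{\Jcal}\text{-cell} \subseteq \bar{W}$. Here I would check that $\For$ carries a pushout of a map in $\bar{\Jcal}$ to the corresponding non-unital pushout, i.e.\ that $\bar{Y} \times^{\bar{X}} P \iso Y \times^{X} \For(P)$, mirroring the identity $\tilde{Y} \times^{\tilde{X}} P \iso Y \times^X P$ used in the proof of Theorem~\ref{ModStructUOper}. Granting this, the $2\text{-Cat}_I$ form of Lemma~\ref{Jcell} applies to $\For$ of any $\bar{\Jcal}$-cell, and since $\bar{W}$ is detected through $\For$ this yields $\bar{\Jcal}\text{-cell} \subseteq \bar{W}$. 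The content of the compatibility is local: because the upgrade $\bar{\enskip}$ restricts on each diagonal block $A_{ii}$ to the operadic upgrade $\tilde{\enskip}$ (as noted after the definition of $2\text{-Cat}_I^u$) and is unchanged off the diagonal, the pushout computation reduces cell-by-cell to the DG-operad calculation already carried out. Finally, the smallness conditions~(2) and~(3) are checked directly on the generators as in the proof of Theorem~\ref{2CatIModelStruct}, while the two-out-of-three property and closure under retracts in~(1) hold because $\bar{W}$ is reflected from the weak equivalences of $2\text{-Cat}_I$ through $\For$; this completes the verification of the hypotheses of Theorem~\ref{Ho}.
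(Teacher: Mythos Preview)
Your proposal is correct and follows essentially the same approach as the paper: the paper's proof is the single sentence ``follows from Theorem~\ref{2CatIModelStruct} in the same way that Theorem~\ref{ModStructUOper} follows from Theorem~\ref{ModelCatOperad},'' and what you have written is precisely the unpacking of that sentence, transferring the model structure along the adjunction $\bar{\enskip} \dashv \For$ using $\Map_u(\bar{A},B)\iso\Map(A,\For(B))$ and the pushout compatibility $\bar{Y}\times^{\bar{X}}P \iso Y\times^{X}\For(P)$.
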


\section{Homotopy adjunction}

In the previous section we defined 2-Cat$_I$ which is the homotopy version replacement of 2-categories. Now we need to define homotopy versions of the free adjunction category and Fun$_{A_1,A_2}$ in 2-Cat$_{\{0,1\}}$.

\subsection{Relaxed version of Fun$_{A_1,A_2}$}
Let $A_1$ and $A_2$ be cofibrant DG-categories. We need to define a homotopy version of Fun$_{A_1, A_2} \in 2-\text{Cat}_I$. For this we need to define the homotopy version DG-Fun$_\infty(A_i, A_j)$ for $i,j=1,2$. The objects are DG-functors $A_i \to A_j$ and the Hom spaces are coherent natural transformations between DG-functors (see e.g. \cite{Tam}, \cite{Fa1} and \cite{Fa2}).

\begin{defin}
Let $f,g: A \to B$ be DG-functors. Consider the $A$-$A$ bimodule ${_f} B_g$ with ${_f} B_g(s;t):=B(f(s);g(t))$ and bimodule structure given by
\[\xymatrix@R=1em{ A(s_2;s_1) \otimes _f B_g(s_1; t_1) \otimes A(t_1;t_2) \ar[d] \\ B(f(s_2);f(s_1)) \otimes B(f(s_1);g(t_1)) \otimes B(g(t_1); g(t_2)) \ar[d] \\ B(f(s_2);g(t_2))} \]
Coherent natural transformations between $f$ and $g$ are defined as Hochschield cochains
\[\Coh(f,g):=HC^\bullet(A,{_f}B_g)=\Hom_{\text{Vect}(k)}(\Bcal(A), {_f} B_g).\]
The category with objects DG-functors and morphisms coherent natural transformations is denoted by DGFun$_\infty(A,B)$.
\end{defin}

In the definition it is not clear how to compose coherent natural transformations. For this we use the following lemma

\begin{rem}\label{CohRewrite}
In the notation from the definition. Define the $A$-module ${_f} B$ with ${_f} B(s)=B(f(s))$ and the same left $A$-module structure as ${_f}B_g$. Then
\[\Coh(f,g)=\Hom_{\text{mod-}B, \Bcal(A)-\operatorname{comod}}(\Bcal(A) \otimes {_f}B, \Bcal(A) \otimes {_g}B).\]
\end{rem}
Indeed, observe that as an $A$-$A$ bimodule
\[{_f}B_g=\Hom_{\text{mod-}B}({_f}B,{_g}B). \]
$\phi \in \Hom_{\text{mod-}B}(B(f(s)),B(g(t)))=\Hom_{\text{mod-}B}(\bigoplus_b B(f(s),b),\bigotimes_{b'}B(g(t),b'))$ is determined my $\phi(\id_{f(s)}) \in B(g(t))$ and
\begin{align}
d(\phi)(b) &=d_B(\phi(\id) b)-(-1)^{|\phi(\id)|} \phi(\id) d_B b\\
&=d_B(\phi(\id)) b+(-1)^{|\phi(\id)|} \phi(\id) d_B b-(-1)^{|\phi(\id)|} \phi(\id) d_B b\\
&=d_B(\phi(\id)) b.
\end{align}
The $A$-$A$-bimodule structure on ${_f}B_g$ clearly coincides with the $A$-$A$-bimodule structure on $\Hom_{\text{mod-}B}({_f}B,{_g}B)$ with the left action given by precomposing with the action on ${_f}B$ and the right action given by postcomposing with the action on ${_g}B$. By $\Hom$-tensor adjunction we get
\begin{align}
\Coh(f,g)&\iso \Hom_{\text{mod-}B}(\Bcal(A) \otimes {_f}B,{_g}B)\\
& \iso \Hom_{\text{mod-}B, \Bcal(A)\text{-comod}}(\Bcal(A) \otimes {_f}B,\Bcal(A) \otimes {_g}B).
\end{align}

The composition in mod-$B$, $\Bcal(A)$-comod defines a composition DG-functor of several variables
\[\circ_n : \text{DGFun}_\infty(A_n, A_{n-1}) \times \text{DGFun}_\infty(A_{n-1}, A_{n-2}) \times \cdots \times \text{DGFun}_\infty(A_2, A_1) \to \text{DGFun}_\infty(A_n, A_1) \]
Notice that it is strictly associative, i.e. for any $m_1, \dots, m_n$ it satisfies
\[ \circ_n (\circ_{m_1}, \dots, \circ_{m_n})=\circ_{m_1 + \dots + m_n}. \]

To define an object in 2-Cat$_I$ we need to associate a complex of vector spaces to each cell. To the cell $f_n, \dots, f_1, g$ we associate
\[ \text{DGFun}_{\infty(A_1, A_2)}(f_n, \dots, f_1;g):=\Coh(f_n \circ \dots \circ f_1,g). \]
The composition is a polyfunctor, i.e. a DG-functor in each variable. Hence, it provides a map
\[\Coh(f_{m_1}^1 \circ \dots \circ f_1^1,g_1) \boxtimes \dots \boxtimes \Coh(f_{m_n}^n \circ \dots \circ f_1^n,g_n) \to \Coh(f_{m_n}^n \circ \dots \circ f_1^1, g_n \circ \dots \circ g_1)  \]
Using this we obtain a map
\[ \begin{tikzcd}[row sep=1em]
\Coh(g_n \circ \dots \circ g_1,h) \otimes \bigl( \Coh(f_{m_1}^1 \circ \dots \circ f_1^1,g_1) \boxtimes \dots \boxtimes \Coh(f_{m_n}^n \circ \dots \circ f_1^n,g_n) \bigr)\arrow{d}\\
\Coh(g_n \circ \dots \circ g_1,h) \otimes \Coh(f_{m_n}^n \circ \dots \circ f_1^1, g_n \circ \dots \circ g_1) \arrow{d}\\
\Coh(f_{m_n}^n \circ \dots \circ f_1^1, h).
\end{tikzcd}\]
This makes DGFun$_{\infty(A_1,A_2)}$ into an element in 2-Cat$_{\{0,1\}}$. In the same way, 2-Cat$_{0}$ is DG-Oper so given a DG-category A, DG-Fun$_\infty(A,A)$ naturally becomes an object.

\subsection{Homotopy adjunction}
Recall from definition \ref{FreeAdj} that the free adjunction 2-category Adj has Adj$_{0,0}=\Delta_k$, Adj$_{0,1}=\lrhom_k$, Adj$_{1,1}=\nabla_k$ and Adj$_{1,0}=\rrhom_k$. Notice that a composition of order preserving maps is order preserving and a composition of maps preserving first/last element preserves first/last element. Hence, composition gives maps
\[ \text{Adj}_{i,j}([n];[\ell]) \otimes \text{Adj}([\ell];[m]) \to \text{Adj}([n];[m])\]
These maps makes Adj an object in 2-Cat$_{\{0,1\}}$. 

\begin{defin}
Let $A$ be a cofibrant DG-category. A homotopy monad is an $A_\infty$-morphism $\Bcal^* \Bcal(Delta_k) \to \text{DGFun}_\infty(A,A)$.
\end{defin}

We now have all the pieces required to define homotopy adjunction.

\begin{defin}\label{HomAdjDef}
Let $A_1,A_2$ be cofibrant DG-categories.
A homotopy adjunction is an A$_\infty$-morphism $\Bcal^* \Bcal(\text{Adj}) \to \text{DGFun}_\infty(A_1,A_2)$.
\end{defin}

Notice that the data of an $A_\infty$-morphism in 2-Cat$_{\{0,1\}}$ in particular defines a homotopy monad
\begin{align}
\Fcal_{0,0}: \Bcal^*\Bcal(\Delta_k) \to \text{DGFun}_\infty(A_1,A_1).
\end{align}
The additional data is supposed to define in particular homotopy actions of the monad in the spirit of remark \ref{StreetRem}. We plan to study the obtained structure in detail in our next paper.

\end{document}